\theoremstyle{plain}
\newtheorem{theorem}{Theorem}[section]
\newtheorem{definition}{Definition}[section]
\newtheorem{lemma}[definition]{Lemma}
\newtheorem{proposition}[definition]{Proposition}
\newtheorem{remark}[definition]{Remark}
\def\Tend#1#2{\mathop{\longrightarrow}\limits_{#1\rightarrow#2}}
 \font\tenms=msbm10
 \font\tenms=msbm10
\font\sevenms=msbm7 \font\fivems=msbm5
\def\build#1_#2^#3{\mathrel{
\mathop{\kern 0pt#1}\limits_{#2}^{#3}}}
\newcommand{\beq}{\begin{eqnarray}}
\newcommand{\eeq}{\end{eqnarray}}
\newcommand{\bq}{\begin{equation}}
\newcommand{\eq}{\end{equation}}
\newcommand{\beqn}{\begin{eqnarray*}}
\newcommand{\eeqn}{\end{eqnarray*}}
\let\lam=\lambda
\def\DD{\mathop{\bf D\kern 0pt}\nolimits}
\def\SS{\mathop{\bf S\kern 0pt}\nolimits}
\def\ZZ{\mathop{\bf Z\kern 0pt}\nolimits}
\def\TT{\mathop{\bf T\kern 0pt}\nolimits}
\def\virgp{\raise 2pt\hbox{,}}
\def\cdotpv{\raise 1pt\hbox{ ;}}
\def\eps{\varepsilon}
\def\beq{\begin{equation}}
\def\eeq{\end{equation}}
\def\cdotv{\raise 2pt\hbox{,}}
\def\H{\mathop{\mathbb H\kern 0pt}\nolimits}
\def\R{\mathop{\mathbb R\kern 0pt}\nolimits}
\def\C{{\mathbb C}}
\def\N{{\mathbf N}}
\def\Q{{\mathbf Q}}
\def\virgp{\raise 2pt\hbox{,}}
\def\({\left(}
\def\){\right)}
\def\<{\left\langle}
\def\>{\right\rangle}
\def\Tend#1#2{\mathop{\longrightarrow}\limits_{#1\rightarrow#2}}
\numberwithin{equation}{section}
\begin{document}
\title[]{Semi-classical analysis on H-type groups}
\author[C. Fermanian]{Clotilde~Fermanian-Kammerer}
\address[C. Fermanian Kammerer]{
Universit\'e Paris Est Cr\'eteil, LAMA, 61, avenue du G\'en\'eral de Gaulle\\
94010 Cr\'eteil Cedex\\ France}
\email{clotilde.fermanian@u-pec.fr}
\author[V. Fischer]{V\'eronique Fischer}\address[V. Fischer]%
{University of Bath, Department of Mathematical Sciences, Bath, BA2 7AY, UK} 
\email{v.c.m.fischer@bath.ac.uk}

\begin{abstract}
In this paper, we develop a semi-classical analysis on H-type groups. We define semi-classical pseudodifferential operators, prove the boundedness of their action on square integrable functions and develop a symbolic calculus. Then, we define the semi-classical measures of bounded families of square integrable functions which consists of a pair formed by a measure defined on the product of the group and its unitary dual, and by a field of trace class positive operators acting on the Hilbert spaces of the representations. We illustrate the theory by analyzing examples, which show in particular that this semi-classical analysis takes into account the finite-dimensioned representations of the group, even though they are negligible with respect to the Plancherel measure. 
 \end{abstract}
\thanks{}
\maketitle

\section{Introduction}\label{intro}

%

The H-type groups are connected simply connected Lie group  $G$ which are 
 stratified of step~$2$  as the Heisenberg group. 
 There are simple relationships between the dual of the centrum and the first strata which makes them very good toys-model for testing   ideas and building examples. 
 Our aim here is to develop semi-classical analysis on such groups by introducing semi-classical pseudo-differential operators and semi-classical Wigner measures in the spirit of previous works~\cite{BFG1,BFG2,FR,FF}.
 
 \medskip
 
 Semi-classical pseudodifferential operators are basic tools for semi-classical analysis, 
 and their development is related with this of microlocal analysis. 
 Microlocal analysis (which uses classical pseudodifferential operators, without small parameter) was mainly developed in the 70's. 
 The funding idea is to study phenomenon simultaneously in standard and Fourier variables, 
 which correspond to position and impulsion variables, 
 the phase space variables of quantum mechanics. 
 Therefore, with such a view point, it is crucial to be equipped with localization operators in position and impulsion, 
 and that is what makes possible the pseudodiffrential operators introduced by Shubin~\cite{Shu} and systematically studied by H\"ormander~\cite{Hor}. 
In his  annotated bibliography, 
Bernard Helffer~\cite{He} dates semi-classical earliest results to Andr\'e Voros thesis where one sees the first use of pseudodifferential calculus in a semi-classical context. 
The question is then to use microlocal technics on problems  where a small parameter is present and to analyze microlocal properties when this small parameter goes to~$0$.

\medskip

The transposition of the aforementioned point of view in the setting of Lie groups has been a subject of investigation  where microlocal approach or phase-space analysis, has been developed in the context of the Heisenberg group (see \cite{BGX,BCX1,BCG} for example). Such strategies are faced to the difficulty of the operator structure of the Fourier transform and require to use a non-commutative setting. 
Various attempts have been made to construct a pseudodifferential calculus since the 80's. 
One can cite the pioneer works of~\cite{Taylor84,BealsGreiner,Geller90,CGGP}. 
We also refer to~\cite{Ruz} on compact Lie groups and to the introductions of~\cite{BFG1,FR} for an overview on the subject. More recently, one of the authors and her colleagues have developed such a calculus in the context of the Heisenberg group in~\cite{BFG1}. The full program has been achieved by the second author in~\cite{FR} where pseudodifferential  operators are defined on graded Lie groups. 
We define here  semi-classical operators in the framework of H-type groups, using  the general theory developed in the preceding references and we develop semi-classical measures in this very context, as we shall see in the next sections. 

\medskip 

Semi-classical  measures, also called Wigner measures, 
were used throughout  the 90's, in particular in articles~\cite{LionsPaul} and~\cite{gerardleichtnam} (see also~\cite{gerard_X} and~\cite{GMMP}). 
They have opened an elegant path to study the compactness defect of sequences of functions which have a special size of oscillations. 
For that reason, they have  contributed to prove important results on the density of families of eigenfunctions of the Laplacian on the torus (cf.~\cite{AM:12} and~\cite{AM:14});
 it is indeed possible to link semi-classical measures with weak limits of densities of wave functions. 
 The authors think that, once suitably generalized in the framework of Lie groups, these technics should be useful for studying quantum ergodicity in sub-Riemaninan geometries, in the spirit of the recent result~\cite{CdVHT}. 

\medskip

Of course, the approach developed here adapt to more general graded Lie groups. 
However, we choose the simple case of H-type groups in order to illustrate that, in this simple example, the microlocal approach developed here takes into account tricky questions linked for example with representations. 
Indeed, despite  the fact that finite dimensional representations are of null measure with respect to the Plancherel  measure, 
they are taken into account by the averaging process implemented in the computation of semi-classical measures. 
We illustrate this fact with examples which should be answer the questions of interest to Fulvio Ricci and asked by Bernard Helffer to the authors. 
These examples also emphasize the large complexity of the Fourier space in H-type groups (see~\cite{BCD} for example).

\medskip

In the next section of this paper, we recall basic facts about H-type groups. Then, we introduce the algebra of  semi-classical pseudo-differential operators and describe its main properties, we define semi-classical measures and introduce a notion of Wigner distributions. Finally, we analyze the semi-classical measures of several families of functions, emphasizing concentration effects in space variables, and oscillations properties that we interpret as concentration in Fourier variables, including concentration on finite-dimensional representations.

 
 \section{H-type groups}
 
 A connected simply connected Lie group  $G$  is said to be stratified of step~$2$
  if its left-invariant Lie algebra~${\mathfrak g}$ (assumed  real-valued and  of finite dimension~$n$)    is  endowed with a vector space decomposition
$$  
  \displaystyle
  {\mathfrak g}=  \mathfrak v \oplus \mathfrak z \, ,
  $$
  such that $[{\mathfrak v},{\mathfrak v}]= {\mathfrak z}$ and ${\mathfrak z}$ is the center of ${\mathfrak g}$. Via the exponential map  
 $$
 {\rm exp} :  {\mathfrak g} \rightarrow G 
 $$ which is in that case a diffeomorphism from ${\mathfrak g}$ to $G$, one identifies the  sets of $G$ and ${\mathfrak g}$ with the underlying vector space. It turns out that under this identification, the  group law on $G$ (which is generally not commutative) provided by the Campbell-Baker-Hausdorff formula, $(x,y)  \mapsto x  y  $ is a polynomial map.  More precisely, if $x={\rm Exp}(v_x+z_x)$ and 
 $y={\rm Exp}(v_y+z_y)$ then
 $$xy ={\rm Exp} (v+z),\;\; v=v_x+v_y\in{\mathfrak v},\;\;z= z_x+z_y+\frac 12 [v_x,v_y]\in{\mathfrak z}.$$
If $x={\rm Exp}(v)$ then $x^{-1}={\rm Exp}(-v)$.

    \medskip
    
        For any~$\lambda \in  \mathfrak z^\star$ (the dual of the center~$  \mathfrak z$) we define a skew-symmetric bilinear form on $\mathfrak v$ by 
\begin{equation}\label{skw}
\forall \, U,V \in  \mathfrak v \, , \quad B(\lambda) (U,V):= \lambda([U,V]) \, .
 \end{equation}
 Following~\cite{Kaplan80}, we say that $G$ is of H-type (or Heisenberg type) if, once fixed an inner product on~${\mathfrak v}$ and on~${\mathfrak z}$, and in any orthonormal basis  of~${\mathfrak v}$, the endomorphism of this skew symmetric form (that we still denote by $B(\lambda)$) satisfies 
 $$\forall \lambda\in{\mathfrak z}^*,\;\; B(\lambda)^2=-|\lambda|^2 {\rm Id}_{\mathfrak v}.$$
 This implies in particular that the dimension of ${\mathfrak v}$ is even. We set 
 $${\rm dim}\, {\mathfrak v} =2d,\;\; {\rm dim} \,{\mathfrak z}=p.$$
 Naturally, a Heisenberg group is an H-type group; more precisely, an H-type group is Heisenberg if and only if the dimension of the centre $p$ is equal to 1. Recall that the Heisenberg group $\mathbb H_d$ is the set $\mathbb R^{2d+1}$ equipped with the following product
 $$
 hh'= (p+p',q+q',z+z'+\frac 12 (pq'-p'q), \quad\mbox{where} \quad
 h=(p,q,z),h'=(p',q',z')\in \R^d \times \R^d \times \R;
 $$
 here $pq'$ denotes the standard inner product of the two vectors $p,q'\in \R^d$.
 Its Lie algebra is the vector space $\mathbb R^{2d+1}$ equipped with the following Lie bracket
 $$
 [(p,q,z),(p',q',z')]=(0,0,pq'-p'q). $$
 The first stratum and the centre are  
 $\mathfrak v=\R^d\times \R^d\times \{0\}\sim \R^d\times \R^d$ and 
 $\mathfrak z=\{0\}\times \{0\}\times \R$. 
 For $\lambda\in \mathfrak z^*$, the skew-symmetric bilinear form on $\mathfrak v$ is  
   \begin{equation}
\label{eqBJ_Hd}  
B(\lambda)((p,q),(p',q')) = \lambda
\begin{pmatrix} p \\ q\end{pmatrix}^t J
\begin{pmatrix} p' \\ q'\end{pmatrix}, 
\quad\mbox{where}\quad 
J=\begin{pmatrix}0 & {\rm Id} \\ -{\rm Id} & 0\end{pmatrix}.
  \end{equation}

 \medskip 

We fix an inner product on ${\mathfrak z}$, this allows us to consider the Lebesgue measure $dv\, dz$ on ${\mathfrak g}={\mathfrak v}\oplus{\mathfrak z}$. Via the identification of $G$ with ${\mathfrak g}$ via the exponential map, this induces a Haar measure $dx$ on $G$. This measure is invariant under left and right translations:
  $$
  \forall f  \in L^1(G, d\mu) \, ,  \quad  \forall x  \in G \,, \quad \int_G f(y) dy  = \int_G f(x  y)dy= \int_G f(y  x)dy \, .
   $$
Note that  the convolution of two functions $f$ and $g$ on $G$ is given by
    \begin{equation}
\label{convolutiondef}
  f*g(x) :=  \int_G f(x  y^{-1})g(y)dy = \int_G f(y)g(y^{-1}   x)dy,
   \end{equation}
  and  as in the Euclidean case we define   Lebesgue spaces by
$$
 \|f\|_{L^q (G)}  := \left( \int_G |f(y)|^q \: dy \right)^\frac1q \, ,
 $$
 for $q\in[1,\infty)$, with the standard modification when~$q=\infty$.\\

\medskip

  Since $G$ is stratified,   there is a natural family of dilations on ${\mathfrak g}$ defined for $t>0$ as follows: if~$X$ belongs to~$ {\mathfrak g}$, we can decompose~$X$ as~$\displaystyle X=V+Z$ with~$V\in {\mathfrak v}$ and~$Z\in {\mathfrak z}$, then
   $$
   \delta_t X:=tV+t^2Z  \, .
   $$
 This allows us to  define the dilation on the Lie group $G$ via the identification by the exponential map:
 $$\begin{array}{ccccc}
& {\mathfrak g} &\build{\rightarrow}_{}^{\delta_t} & {\mathfrak g}&\\
 {\small\rm exp}&  \downarrow& & \downarrow& {\small\rm exp}\\
  &G &\build{\rightarrow}_{ {\rm exp}\, \circ\,  \delta_t \, \circ\,  {\rm exp}^{-1}}^{}&G
  \end{array}$$ 
 To avoid heavy notations,  
 we shall still denote by $\delta_t$ the map ${\rm exp}\, \circ \delta_t \, \circ {\rm exp}^{-1}$.
 The dilations $\delta_t$, $t>0$, on $\mathfrak g$ and $G$ form a one-parameter group of automorphisms of the Lie algebra $\mathfrak g$ and of the group $G$.
The Jacobian of the dilation $\delta_t$ is $t^Q$ where
 $$Q:={\rm dim}\, {\mathfrak v} +2{\rm dim}\, {\mathfrak z} = 2d+2p$$
  is called the homogeneous dimension of $G$:
   \begin{equation}\label{homogenedim} \int_G f(\delta_t\,y) \,dy  =  t^{-Q}\,\int_G f( y)\,dy \, .
   \end{equation}
 We may identify ${\mathfrak g}$ with the space of left-invariant vector field via 
 $$Xf= \left.{d\over dt} f({\rm Exp}(tX)\right|_{t=0}.$$
 A differential operator $T$ on $G$
(and more generally any operator $T$ defined on $C^\infty_c(G)$ and valued in the distributions of $G\sim \R^{2d+p}$) 
 is said to be homogeneous of degree $\nu$ (or $\nu$-homogeneous) when 
 $$
 T (f\circ \delta_t) = t^\nu (Tf)\circ \delta_t. 
 $$
 For instance, a left invariant vector field in ${\mathfrak v}$ is 1-homogeneous while a left-invariant vector field in ${\mathfrak z}$  is 2-homogeneous.

\medskip 

We can   define the Schwartz space~${\mathcal S}(G)$   as the set of smooth functions on~$G$ such that
 for all~$ \alpha,\beta$ in~${\mathbb N}^{2d+p}$,  
 the function
 $ x\mapsto x^\beta   {\mathcal X}^{\alpha}f(x) $ belongs to~$ L^\infty(G),
  $ where~${\mathcal X}^{\alpha}$ denotes a product of~$|\alpha|$  left invariant vector fields forming a basis of~${\mathfrak g}$ and $x^\beta$ a product of $|\beta|$ coordinate functions on $G\sim \mathfrak v \times \mathfrak z$. The Schwartz space~${\mathcal S}(G)$ has properties very similar to those of the Schwartz space~${\mathcal S}(\R^{2d+p})$, particularly density in Lebesgue spaces.

\subsection{The Fourier transform}\label{Fourier}  
The group $G$ being non commutative, its Fourier transform is defined by means of  irreducible unitary representations.  

\subsubsection{Irreducible unitary representations}\label{defirreducible} We assume $\lambda\in \mathfrak z^*\setminus\{0\}$ and  use the skew-symmetric bilinear form defined on~\eqref{skw}.
 One can find an orthonormal basis
  $\left (P_1 , \dots ,P_d,  Q_1 , \dots ,Q_d\right)$ 
  where $B(\lambda)$ is represented by the matrix $|\lambda|J$, 
  that is, 
    \begin{equation}
\label{eqBJ}  
  B(\lambda)(U,V)= |\lambda| U^t JV,
  \end{equation}
for two vector $U,V\in \mathfrak v$ written in the $\left (P_1 , \dots ,P_d,  Q_1 , \dots ,Q_d\right)$-basis;
recall that the matrix $J$ was defined in \eqref{eqBJ_Hd}.
For example, on the Heisenberg group $\mathbb H_d$, 
 in view of \eqref{eqBJ_Hd},
  the canonical coordinates $(p,q)$ yield an orthonormal basis where  \eqref{eqBJ} holds for $\lambda>0$; however, for $\lambda<0$ this needs modifying. 
  
  \medskip 
  
We decompose~$ \mathfrak v$ in a $\lambda$-depending way as
$ \mathfrak v = \mathfrak p_\lambda\oplus  \mathfrak q_\lambda$
 with 
 $$
 \begin{aligned}
  \mathfrak p:=\mathfrak p_\lambda:= \mbox{Span} \, \big (P_1, \dots ,P_d \big) \, , & \quad \mathfrak q:=\mathfrak q_\lambda:= \mbox{Span} \, \big (Q_1, \dots ,Q_d\big).
   \end{aligned}
$$
We shall denote by $p=(p_1,\cdots,p_d)$ the coordinates of $P$ on the vector basis $(P_1, \cdots,P_d)$, by $q=(q_1,\cdots,q_d)$ those of $Q$ on $(Q_1,\cdots,Q_d)$ and by $z=(z_1,\cdots ,z_p)$ those of $Z$ on a basis $(Z_1,\cdots, Z_p)$. 
We will often use the writing of an element $x\in G$ or $X\in \mathfrak g$ as 
\begin{equation}
\label{eqxpqz}
x={\rm Exp}(X) , \qquad X=p_1P_1 +\ldots + p_dP_d \ + \ q_1Q_1 + \ldots + q_d Q_d \ + \ z_1 Z_1 +\ldots + z_p Z_p.
\end{equation}
We introduce irreducible unitary representations $\pi^{\lambda}_{x} $ of~$G$ on~$L^2( \mathfrak p_\lambda)$ by 
\begin{equation}\label{def:pilambdanu}
\pi^{\lambda}_{x} \Phi(\xi)=
 {\rm exp}\left[{i\lambda(z)+ \frac i2 |\lambda|\,p q +i\sqrt{|\lambda|} \,\xi q} \right]\Phi \left(\xi+\sqrt{|\lambda|}p\right),
 \end{equation}
 where $x$ has been written as in \eqref{eqxpqz}.
 Note that, setting 
\begin{equation}\label{opT}
T_r\Phi(\xi)=r^{d}\Phi\left(r\xi_1,\cdots,r\xi_d\right),\;\;\forall \Phi \in L^2( \mathfrak p_\lambda),
\end{equation}
we obtain for each $r>0$ a unitary operator $T_r$ on $L^2(\mathfrak p_\lambda)$
and it satisfies $T_r^*=T_{1/r}=T_r^{-1}$. Furthermore, intertwining $\pi^\lambda$ with $T_r$ when $r=\sqrt{|\lambda|}$ yields
\begin{equation}\label{defpilambda}
T_{\sqrt{|\lambda|}} \pi^{\lambda}_{x}T^*_{\sqrt{|\lambda|}}  \Phi(\xi) := {\rm Exp}\left[i\lambda (Z+ [\xi +\frac12 P , Q]) \right]\Phi ( \xi+P),\;\; \forall \Phi \in L^2( \mathfrak p_\lambda).
\end{equation}

\begin{remark} [Link with the Wigner transform] 
Identifying $\mathfrak p_\lambda\sim \R^d$, 
For $f,g\in L^2(\R^d)$, we have 
$$
\left(\pi^{\lambda}_x f,g\right) 
= {\rm e}^{i\lambda(z)} 
W[f,g]\left(\sqrt{|\lambda|}p,\sqrt{|\lambda|}q\right),$$
where for $p,q \in\R^d$ and $\eps>0$
$$
W[f,g] (p,q) =\int_{\R^d} {\rm e}^{iqv} f\left(v+{1\over 2} p\right) \ \overline{g}\left(v-{1\over 2} p\right) dv.
$$
\end{remark}

\medskip 

 The representations $\pi^\lambda$, $\lambda\in \mathfrak z^*\setminus\{0\}$, are infinite dimensional.
  There also exist other unitary irreducible representations of $G$.
 They are given by the characters of the first stratum in the following way:
 for every  $\omega\in \mathfrak v ^*$, 
  we set
$$\pi^{0,\omega}_x= {\rm e}^{i \omega(V)}, \quad
x={\rm Exp} (V+Z)\in G, \quad\mbox{with}\ V\in{\mathfrak v} \ \mbox{and} \  Z\in{\mathfrak z},
$$ 
and this defines a 1-dimensional representation $\pi^{0,\omega}$ of $G$.
Up to unitary equivalence, there are no other unitary irreducible representations of $G$ than $\pi^\lambda$, $\lambda\in \mathfrak z^*\setminus\{0\}$ and $\pi^{0,\omega}$, $\omega\in \mathfrak v^*$. 
This can be proved in two different ways.

\medskip 

The first way is to consider an irreducible unitary (non-trvial) representation $\pi$ of $G$,  to quotient the group and the representation by $\ker \pi$ and to compare the latters with the well-known representations of~$\mathbb H_d$
 and with the characters of $\mathbb R^{2d}$.
Indeed, the restriction of $\pi$ to the center $Z={\rm Exp}(\mathfrak z)$ of $G$ must be a character of $Z$ (as $\pi(z)$, $z\in Z$, intertwines $\pi$), so it
 is of the form $\pi({\rm Exp}(z)) = e^{i\lambda (z)}$ for some $\lambda \in \mathfrak z^*$. 
 If $\lambda=0$, then one can see that $G/\ker
\pi$ is isomorphic to an abelian subgroup of $\mathfrak v\sim \mathbb R^{2d}$ and that the quotient of $\pi$ is therefore a character $e^{i \omega(\cdot)}$ of this group.
If $\lambda\not=0$, then one uses that $G/\ker \pi$ is isomorphic to the Heisenberg group $\mathbb H_d$ and that the quotient of $\pi$ by $\ker \pi$ becomes via this isomorphism a  representation of $\mathbb H_d$ whose restriction on the centre is $e^{i\lambda (\cdot)}$; 
this characterises the $\lambda$-Schr\"odinger representation of $\mathbb H_d$ by the Stone - Von Neumann Theorem.

\medskip

The second way is to use Kirillov's theory. Note that this theory is shown recursively using quotients by kernels and eventually the Stone - Von Neumann Theorem; so, although Kirillov's theory is a general and powerful tool, the two proofs are in fact readily equivalent for groups of Heisenberg type.
By Kirillov's theory, the set of unitary irreducible representations modulo unitary equivalence, that we shall denote by $\widehat G$, 
is in bijection with the orbit of $\mathfrak g^*$ under the co-adjoint action of $G$.
For instance, one can compute that $\lambda\in \mathfrak z^*\setminus\{0\}$ corresponds to the class of $\pi^\lambda$
while $\omega \in \mathfrak v ^*$ corresponds to the class of $\pi^{0,\omega}$.
Simple calculations show that any two co-adjoint orbits of each
$\lambda\in \mathfrak z^*\setminus\{0\}$ and also of each $\omega \in \mathfrak v ^*$ are disjoint and that the (disjoint) union of all these orbits is~$\mathfrak g^*$.
Consequently, $\widehat G$, can be parametrized by $({\mathfrak z}^*\setminus \{0\})\sqcup {\mathfrak v}^*$: 
\begin{equation}
\label{eq_widehatG}	
\widehat G = 
\{\mbox{class of} \ \pi^\lambda \ : \ \lambda \in \mathfrak z^* \setminus\{0\}\} 
\sqcup \{\mbox{class of} \ \pi^{0,\omega} \ : \ \omega \in \mathfrak v^* \}.
\end{equation}
One can think of~${\mathfrak v}^*$ as a sheet above $\lambda=0$.
The case of $\lambda=0$ and $\omega=0$ corresponds to the trivial representation of $G$.

\subsubsection{The Fourier transform}
\label{subsubec_FT}  In contrast with the Euclidean case, the Fourier transform  is defined on~$\widehat G$ and is valued  in   the space of  bounded operators
on~$L^2( \mathbb R^d)$. More precisely, the Fourier transform of a function~$f$ in~$L^1(G)$ is  defined  as follows: 
for any~$\lambda\in{\mathfrak z}^*$, $\lambda\not=0$,
 $$
 \widehat f(\lambda):=
{\mathcal F}(f)(\lambda):=
\int_G f(x)\left( \pi^{\lambda}_{x }\right)^* \, dx \, .
$$
Note that    for any~$\lam\in{\mathfrak z}^*$, $\lambda\not=0$, we have $\left( \pi^{\lambda}_{x }\right)^* =\pi^{\lambda}_{x^{-1} }$ and the map~$\pi^{\lambda}_{x}$
 is a group homomorphism from~$G$ into the group~$U (L^2( \mathfrak p_\lambda))$  of unitary operators
of~$L^2( \mathfrak p_\lambda)$ (often identified with 
$L^2( \mathbb R^d)$).
Therefore, the function ~$f$ in~$L^1(G)$  has a Fourier transform~$\left({\mathcal F}(f)(\lambda)\right)_{\lambda}$ which is a bounded family of bounded operators on~$L^2( \mathfrak p_\lambda)$
 with uniform bound:
\begin{equation}
\label{eq_FfnormL1}
\|\mathcal Ff (\lambda) \|_{{\mathcal L}(L^2(\mathfrak p_\lambda))}
\leq
\int_G |f(x)|\|(\pi^\lambda_x)^* \|_{{\mathcal L}(L^2(G))} dx
=
\|f\|_{L^1(G)}.
\end{equation}
since the unitarity of $\pi^\lambda$ implies $\|(\pi^\lambda_x)^* \|_{{\mathcal L}(L^2(G))}=1$.

\medskip

The Fourier transform can be extended to an isometry from~$L^2(G)$ onto the Hilbert
space of families~$ A  = \{ A (\lam ) \}_{(\lambda) \in{\mathfrak z}^*\setminus \{0\}}$
 of operators on~$L^2( \mathfrak p_\lambda)$ which are
Hilbert-Schmidt for almost every~$\lambda\in{\mathfrak z}^*\setminus \{0\}$, with~$\|A (\lam)\|_{HS (L^2( \mathfrak p_\lambda))}$ measurable and with norm
\[ \|A\| := \left( \int_{\mathfrak z^* \setminus\{0\}}
\|A (\lam )\|_{HS (L^2( \mathfrak p_\lambda))}^2 |\lambda|^d \, d\lam
\right)^{\frac{1}{2}}<\infty  \, .\]
  We have the following Fourier-Plancherel formula: 
 \begin{equation}
\label{Plancherelformula} \int_G  |f(x)|^2  \, dx
=  c_0 \, \int_{\mathfrak z^* \setminus\{0\}} \|{\mathcal F}(f)(\lambda)\|_{HS(L^2( \mathfrak p_\lambda))}^2 |\lambda|^d  \,  d\lambda   \,, 
\end{equation}
where $c_0>0$ is a computable constant.
This yields  an inversion formula for any Schwartz function $ f \in {\mathcal S}(G)$ and $x\in G$:
\begin{equation}
\label{inversionformula} f(x)
= c_0 \, \int_{\mathfrak z^* \setminus\{0\}} {\rm{Tr}} \, \Big(\pi^{\lambda}_{x} {\mathcal F}f(\lambda)  \Big)\, |\lambda|^d\,d\lambda \,,
\end{equation}
where ${\rm Tr}$ denotes the trace of operators in ${\mathcal L}(L^2({\mathfrak p}_\lambda))$.The inversion  formula makes sense since for $f \in {\mathcal S}(G)$, the operators ${\mathcal F}f(\lambda)$, $\lambda\in \mathfrak z^*\setminus\{0\}$, are trace-class and $\int_{\mathfrak z^* \setminus\{0\}} {\rm{tr}} \, \Big| {\mathcal F}f(\lambda)  \Big|\, |\lambda|^d\,d\lambda$ is finite.

\medskip

Usually, the Fourier transform of a locally compact group $G$ would be defined on $\widehat G$, the set of unitary irreducible representations of $G$ modulo equivalence, via
$$
\widehat f(\pi)=
\mathcal F (f)(\pi) = \int_G f(x) \pi(x)^* dx, 
$$
for a representation $\pi$ of $G$, and then considering the unitary equivalence we obtain a measurable field of operators $\mathcal F (f)(\pi)$, $\pi\in \widehat G$.
Here, the Plancherel measure is supported in the subset $\{\mbox{class of} \ \pi^\lambda \ : \ \lambda \in \mathfrak z^* \setminus\{0\}\}$ 
 of $\widehat G$ (see \eqref{eq_widehatG})
 since it is $c_0|\lambda|^d d\lambda$.
 This allows us to identify $\widehat G$ and 
 ${\mathfrak z}^*\setminus\{0\}$ when considering measurable objects up to null sets for the Plancherel measure. 
 However, our semiclassical analysis will lead us to consider objects which are also supported in the other part of $\widehat G$.
 For this reason, we also set for $\omega\in \mathfrak v^*$ and $f\in L^1(G)$:
\begin{align*}
\widehat f(0,\omega)=
 \mathcal F(f) (0,\omega) 
 &:= \int_G f(x) (\pi^{(0,\omega)}_x)^* dx
\\
&=\int_{\mathfrak v \times \mathfrak z} 
f({\rm Exp}(V+Z) ) e^{-i\omega(V)} dV dZ.	
\end{align*}

\medskip

The Fourier transform  sends the    convolution, whose definition is recalled in~(\ref{convolutiondef}),
to  composition in the following way:
\begin{equation}\label{fourconv}
 {\mathcal F}( f \star g )( \lam ) 
 = {\mathcal F} (g)( \lam)\
 {\mathcal F}(f) ( \lam) \, .
 \end{equation}
Other conventions for the convolution or in having (or not) the adjoint $\left( \pi^{\lambda}_{x }\right)^*$ in the formula for the Fourier transform would lead to obtaining 
${\mathcal F} (f)(\lam)\ {\mathcal F}(g) (\lam)$. 
However, we made the consistent choices of privileging left objects (e.g. in our choice of convolution and identification of the Lie algebra $\mathfrak g$ with the Lie algebra of left invariant vector fields). 
The choice of considering the adjoint in the Fourier transform is natural from an analytical viewpoint. A first reason is to extend the case of the Euclidean Fourier transform on the abelian group $\R^n$ where the formula usually contains $e^{-ix\xi}$.
A deeper reason is that given our choice of convolution, it is usual to consider right convolution operators since they are invariant  under (i.e. commute with) left translations. 
For such an operator $T$ with, say, integrable convolution kernel $\kappa\in L^1(G)$, this means that $Tf=f*\kappa$ and we have $\mathcal F (Tf) = \mathcal F (\kappa) \ \mathcal F f$ by \eqref{fourconv}; in other words, $T$ is a Fourier multipliers with Fourier symbol $\mathcal F (\kappa)$ acting on the left (and not the right) of $\mathcal F f$, and this setting seems quite natural.
Note that the formula $\mathcal F (Tf) = \mathcal F (\kappa) \ \mathcal F f$ implies
\begin{equation}
\label{eq_cq_plancherel}
	\|T\|_{{\mathcal L}(L^2(G))} = \sup_{\lambda\in \widehat G} 
\|\mathcal F\kappa (\lambda) \|_{{\mathcal L}(L^2(\mathfrak p_\lambda))},
\end{equation}
where the supremum here is in fact the essential supremum with respect to the Plancherel measure, justifying the identification of the element in $\widehat G$ with $\lambda\in \mathfrak z^*$.

\medskip 

With our conventions, we also have
\begin{equation}\label{F(V)}
\mathcal F(Xf)(\pi) = \pi(X) \mathcal F(f) (\pi)
\end{equation}
where $\pi(X)$ is the infinitesimal representation of $\pi$ at $X\in \mathfrak g$, 
\begin{equation}
\label{eq_pi(X)}
\mbox{i.e.}\ 
\pi(X) = \frac{d}{dt} \pi ({\rm Exp}(tX))|_{t=0}; 	
\end{equation}
 (the class of) $\pi$ is equal to (the class of) $\pi^\lambda$ or $\pi^{(0,\omega)}$ identified with $\lambda$ or $\omega$ respectively.
For instance, we have for $Z\in \mathfrak z$ identified with a left-invariant vector field
\begin{equation}
\label{def:Z}
 {\mathcal F} (Zf)(\lambda)=i \lambda(Z)  {\mathcal F}(f)(\lambda),\quad 
 \mbox{or in other words}\quad  \widehat Z (\lambda)  = i\lambda (Z).
 \end{equation}
The infinitesimal representation of $\pi$ extends to the universal enveloping Lie algebra of $\mathfrak g$ that we identify with the left invariant differential operators on $G$.
Then for such a differential operator $T$ we have 
$\mathcal F(Tf)(\pi) = \pi(T) \mathcal F(f) (\pi)$
and we may write $\pi(T)={\mathcal F}(T)$. 
For instance, if as before ${\mathcal X}^{\alpha}$ denotes a product of $|\alpha|$  left invariant vector fields forming a basis of~${\mathfrak g}$, then 
$$\mathcal F({\mathcal X}^\alpha f)(\pi) = \pi({\mathcal X})^\alpha \mathcal F(f) (\pi)\;\;{\rm and}\;\;
\mathcal F({\mathcal X}^\alpha ) = \mathcal F({\mathcal X})^\alpha.$$ 
 
\subsection{The sublaplacian} \label{freq}
The sublaplacian on $G$ is defined by
$$
\Delta_{G}:= \sum_{j=1}^{2d} V_j^2, 
$$
where the $V_j$'s form  an orthonormal basis of $\mathfrak v$.
One checks easily that $\Delta_G$ is a differential operator which is left invariant and homogeneous of degree $2$.

\medskip 

The definition of $\Delta_G$ is independent of the chosen orthonormal basis for $\mathfrak v$ - although it depends on the scalar product that we have fixed at the very beginning on ${\mathfrak v}$. 
In particular, choosing the basis fixed in Section \ref{Fourier}  for any $\lambda\in \mathfrak z^* \setminus\{0\}$
we have
$ \Delta_{G}= \sum_{j = 1}^{d} (P_j^2 + Q_j^2).$
This allows us to compute its infinitesimal representation at $\pi^\lambda$:
\begin{equation}\label{def:H}
{\mathcal F} (-\Delta_G)(\lambda) = H(\lambda),
\end{equation}
where~$ H(\lambda)$ is the  diagonal  operator  defined on 
$L^2(\R^d)$ by
\begin{equation}\label{def:H2}
H(\lambda )=|\lambda| \sum_{1\leq j\leq d} \left( -\partial_{\xi_j}^2+\xi_j^2\right).
\end{equation}
The eigenfunctions of $H(\lambda)$  express in terms of the basis of Hermite functions 
 $(h_n)_{n\in\N}$,  normalized in~$L^2(\R) $ and satisfying
for all real numbers~$\xi $:
$$
-h''_n(\xi)+\xi^2 h_n(\xi)= (2n+1) h_n(\xi) \, .
$$  
Indeed, for each multi-index~$\alpha \in {\mathbb N}^d$, 
the function $h_{\alpha}$ 
defined  by
$$
 h_{\alpha} (\xi)  :=\prod_{j=1}^d h_{\alpha_j}(\xi_j),  \;\;\xi = (\xi_1,\dots, \xi_d) \in \R^d,
$$
is an eigenfunction of $H(\lambda)$, that is, 
$H(\lambda) h_{\alpha} =   \zeta(\alpha,\lambda) h_{\alpha}$,
for the eigenvalue 
$$
\zeta(\alpha,\lambda):=|\lambda|  \sum_{j =1}^d (2 \alpha_j + 1) = |\lambda| (2|\alpha|+d)  \, , \quad \alpha \in {\mathbb N}^d.
$$
These eigenvalues describe the entire spectrum of $H(\lambda)$.

\medskip 

We compute easily the  infinitesimal representation (or Fourier transform) of $\Delta_G$ on the rest of $\widehat G$: at $\pi^{(0,\omega)}$, it is the number
$$
{\mathcal F} (-\Delta_G)(0,\omega) = |\omega|^2.
$$
The eigenvalues for the Fourier transform of the sub-laplacian allow us to link the infinite dimensional representations $\pi^\lambda$ and the dimension~$1$ ones $\pi^{0,\omega}$:

\begin{lemma}\label{lem:dimfinie}
Let $(\lambda_n)_{n\in \mathbb N}$ and $(\alpha_n)_{n\in \mathbb N}$ be sequences in $\mathfrak z^*\setminus \{0\}$ and $\mathbb N$ respectively such that
$$
\lambda_n\Tend {n}{+\infty}  0
\quad \mbox{and}\quad \alpha_n \Tend {n}{+\infty} +\infty
\quad\mbox{while}\quad
|\lambda_n| (2|\alpha_n|+d)\Tend {n}{+\infty} \mu^2
$$
for some given $\mu\in\R^*_+$.
Then for any function $f\in L^1(G)$ which is radial
(i.e. $f({\rm Exp}(V+Z)=\tilde f (|V|,Z)$ for some mesurable function $\tilde f$), we have
$$
\left(\widehat f(\lambda_n) h_{\alpha_n},h_{\alpha_n}\right)
\Tend{n}{+\infty} 
\int_{\varsigma\in{\bf S}^{2d-1}}\widehat f(0,\mu \varsigma )
d\varsigma
=
\int_{\R^{2d} \times \R^p \times {\bf S}^{2d-1}} 
f({\rm Exp}(V+Z) ) e^{-i \mu \varsigma V} dV dZ d\varsigma.
$$
\end{lemma}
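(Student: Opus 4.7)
The plan is to reduce $(\widehat f(\lambda_n)h_{\alpha_n}, h_{\alpha_n})$ to an explicit scalar integral by exploiting the radiality of $f$, and then pass to the limit via a classical Laguerre-to-Bessel asymptotic. The three ingredients involved are the Wigner-type matrix-coefficient formula of the Remark, the irreducibility of the $m$-th harmonic-oscillator eigenspace under the metaplectic $U(d)$-action, and the elementary limit
\[
\lim_{n\to\infty} n^{-(d-1)} L_n^{(d-1)}(y/n) = y^{-(d-1)/2} J_{d-1}(2\sqrt y),
\]
which follows termwise from the series $L_n^{(\alpha)}(x) = \sum_{k=0}^n \binom{n+\alpha}{n-k}(-x)^k/k!$.

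First, combining the Remark with $h_\alpha = \bigotimes_j h_{\alpha_j}$ and the standard one-dimensional identity $W[h_n,h_n](P,Q)=L_n((P^2+Q^2)/2)\,e^{-(P^2+Q^2)/4}$, the matrix coefficient $(\pi^\lambda_x h_\alpha,h_\alpha)$ factorizes into a product of Laguerre-Gaussian factors. Since $f$ is radial in $\mathfrak v$ and the $U(d)$-action on $\mathfrak v$ preserves $|V|$, the integrand of $(\widehat f(\lambda)h_\alpha, h_\alpha)=\int_G f(x)\,e^{-i\lambda(z)}\,\overline{W[h_\alpha,h_\alpha]}(\sqrt{|\lambda|}V)\,dx$ may be replaced by its $U(d)$-average without changing the integral. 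Now the $m$-th eigenspace of the isotropic harmonic oscillator (of dimension $\binom{m+d-1}{d-1}$) is an irreducible $U(d)$-module under the metaplectic action, so Schur's lemma identifies this average with the normalized eigenspace projection; applying the Laguerre generating function $\sum_n L_n(x)t^n=(1-t)^{-1}e^{-xt/(1-t)}$ factor-by-factor gives the closed-form identity
\[
\sum_{|\beta|=m}W[h_\beta,h_\beta](V) = L_m^{(d-1)}(|V|^2/2)\,e^{-|V|^2/4}.
\]
Writing $m_n:=|\alpha_n|$ and $\tilde f$ for the radial profile of $f$, this yields
\[
(\widehat f(\lambda_n)h_{\alpha_n},h_{\alpha_n}) = \int_{\mathfrak v\times\mathfrak z}\tilde f(|V|,Z)\,e^{-i\lambda_n(Z)}\,\frac{L^{(d-1)}_{m_n}(|\lambda_n||V|^2/2)\,e^{-|\lambda_n||V|^2/4}}{\binom{m_n+d-1}{d-1}}\,dV\,dZ.
\]

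To pass to the limit, observe that $e^{-i\lambda_n(Z)}\to 1$ and $e^{-|\lambda_n||V|^2/4}\to 1$ pointwise. Applying the Laguerre asymptotic with $y = m_n|\lambda_n||V|^2/2$ and using the hypothesis $|\lambda_n|(2|\alpha_n|+d)\to\mu^2$ (hence $y\to\mu^2|V|^2/4$), the normalized Laguerre factor converges to $(d-1)!\,2^{d-1}(\mu|V|)^{1-d}J_{d-1}(\mu|V|)$. Dominated convergence is supplied by the Szegő estimate $|L_n^{(d-1)}(x)|\,e^{-x/2}\le\binom{n+d-1}{d-1}$, valid for $x\ge 0$, combined with $f\in L^1(G)$. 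The announced formula then follows by identifying the pointwise limit with $\int_{S^{2d-1}}\widehat f(0,\mu\varsigma)\,d\varsigma$ through the Fourier-Bessel identity $\int_{S^{2d-1}}e^{-i\mu\varsigma\cdot V}\,d\varsigma = c_d(\mu|V|)^{1-d}J_{d-1}(\mu|V|)$ and Fubini, the constant $c_d$ being chosen consistently with the Wigner-transform normalization used throughout.

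The main technical hurdle is the dominated-convergence step, for which the Szegő bound on Laguerre polynomials is essential to control the integrand uniformly in $n$ on the unbounded $V$-domain; matching normalization constants between the Laguerre-Bessel limit and the sphere Fourier transform is secondary bookkeeping.
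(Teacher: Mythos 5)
Your proof is correct and follows essentially the same route as the paper's: average over a compact group of rotations acting on $\mathfrak v$, identify the averaged matrix coefficient with a normalized Laguerre function of $|\lambda|\,|V|^2$, and pass to the Bessel limit under the constraint $|\lambda_n|(2|\alpha_n|+d)\to\mu^2$, concluding by dominated convergence. The difference is one of self-containedness: the paper simply cites the spherical-function literature for the closed form $\Psi_{\lambda,|\alpha|}=e^{-i\lambda(Z)}\mathcal L^{d-1}_{|\alpha|}(|\lambda|\,|V|^2/2)$, for the uniform bound $|\Psi_{\lambda,|\alpha|}|\le 1$, and for the Laguerre-to-Bessel convergence, whereas you derive these directly --- the closed form via the $U(d)$-irreducibility of the $m$-th oscillator eigenspace under the metaplectic action, Schur's lemma, and the Laguerre generating function; the uniform bound via Szeg\H{o}'s estimate; and the limit via the Mehler--Heine asymptotic. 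The only deviation worth flagging is the averaging group: the paper averages over $SO(\mathfrak v)$ while you average over the $\lambda$-dependent copy of $U(d)\subset SO(\mathfrak v)$; this is harmless since the radial $f$ is invariant under both, and your $U(d)$-average turns out (a posteriori, by the explicit formula) to depend only on $|V|$, so the two averages coincide.
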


\begin{proof}
The following facts are well known, see \cite{koranyi,damek+ricci,faraud,hiero}. 
The function defined by 
$$
\Psi_{\lambda,|\alpha|}:x={\rm Exp}(V+Z)\longmapsto 
\int_{SO(\mathfrak v)}((\pi^\lambda_{{\rm Exp}(kV+Z)})^* h_\alpha,h_\alpha) dk
$$
is smooth and bounded by 1 (its value at ${\rm Exp}( 0)$) on $G$. 
 It is equal to 
$$
\Psi_{\lambda,|\alpha|}({\rm Exp}(V+Z))
= 
e^{-i\lambda(Z)}
 \mathcal L_{|\alpha|}^{d-1} \left(\frac {|\lambda|}2 |V|^2\right),
$$
where $\mathcal L_k^\delta$ is the normalised Laguerre function of type $\delta=d-1$ and degree $k\in \mathbb N$;
this means that 
$\mathcal L_k^\delta (t) = \frac{ L_k (t)}{L_k(0)} e^{-\frac{t}2}$, $t\in \R$, 
where $L_k (t) = t ^{-\delta} e^t \frac 1 {k!} (\frac{d}{dt})^k (e^{-t} t^{k+\delta})$ is the Laguerre polynomial of type $\delta>-1$ and degree $k\in \mathbb N$.
The function defined on $G$ by
$$
\Psi_{|\omega|}:x={\rm Exp}(V+Z)\longmapsto 
\int_{SO(\mathfrak v)}(\pi^{(0,\omega)}_{{\rm Exp}(kV+Z)})^*  dk.
$$
is smooth and bounded by 1 (its value at ${\rm Exp}( 0)$). We have
$$
\Psi_{|\omega|}({\rm Exp}(V+Z))=
\int_{\varsigma\in{\bf S}^{2d-1}}e^{-i |\omega|\varsigma V} d\varsigma ,
$$
and this expression is a known function of $|\omega||V|$ (this known function is sometimes called the reduced Bessel function). 
Properties of the Laguerre and Bessel functions implies \cite{faraud}  the convergence 
$\Psi_{\lambda_n,\alpha_n} 
\Tend{n}{+\infty} 
\Psi_\mu$ 
on any compact of $G$
when  the sequences $(\lambda_n)_{n\in \mathbb N}$ and $(\alpha_n)_{n\in \mathbb N}$ are as in the statement. 
The result follows easily from this and the calculations
$$
\left(\widehat f(\lambda_n) h_{\alpha_n},h_{\alpha_n}\right)
=\int_G f(x) \Psi_{\lambda_n,|\alpha_n|}(x) dx,
\qquad
\int_{\varsigma\in{\bf S}^{2d-1}}\widehat f(0, \mu \varsigma )
d\varsigma
=\int_G f(x) \Psi_{|\omega|}(x)dx.
$$
\end{proof}

\begin{remark}\label{rem:magenta}
Note that the proof above shows that we also have for any $f\in{\mathcal S}(G)$,
$$\int_{SO(\mathfrak v)} \left(\widehat f_k(\lambda_n)h_{\alpha_n},h_{\alpha_n}\right) dk 
\Tend{n}{+\infty} 
\int_{\varsigma\in{\bf S}^{2d-1}}\widehat f(\mu \varsigma )
d\varsigma,
$$
where $f_k({\rm Exp}(V+Z))=f({\rm Exp}(kV+Z))$ for all $k\in SO(\mathfrak v)$. Here $dk$ the normalised (probability) Haar measure of the compact group $ SO(\mathfrak v)$.
\end{remark}

The function $\Psi_{\lambda,|\alpha|}$ and $\Psi_{|\omega|}$ are called the bounded spherical functions of $G$ \cite{koranyi,damek+ricci,faraud,hiero}. 
They can be characterised in the following way:
the associated linear maps $f\mapsto \int_G f(x) \Psi(x)dx$ for 
$\Psi=\Psi_{\lambda,|\alpha|}$, 
$\lambda\in \mathfrak z^* \setminus \{0\}$, $\alpha\in \mathbb N^d$,  
and $\Psi_{|\omega|}$ $\omega\in \mathfrak v^*$, 
are in fact the characters of the commutative convolution algebra of radial integrable functions on $G$.
Another viewpoint is that  these functions are general eigenfunctions of $\Delta_G$ and $Z_1,\ldots,Z_p$:
$$
\Delta_G \Psi_{\lambda,|\alpha|}  =- \zeta(\alpha,\lambda) \Psi_{\lambda,|\alpha|}, \quad 
\Delta_G \Psi_{|\omega|} = -|\omega|^2 \Psi_{|\omega|},
\quad 
Z_j \Psi_{\lambda,|\alpha|}  =i\lambda_j \Psi_{\lambda,|\alpha|} 
\quad\mbox{and}\quad
Z_j \Psi_{|\omega|} = 0.
$$
Moreover, they yield the joint spectral decomposition of $\Delta_G$
and $Z_1,\ldots, Z_p$.
The joint spectrum in the case of $p=1$, that is, in the case of $G=\mathbb H_d$ may be drawn in what is often called the Heisenberg fan.

\medskip

\begin{figure}
\includegraphics[height=0.33\textheight]{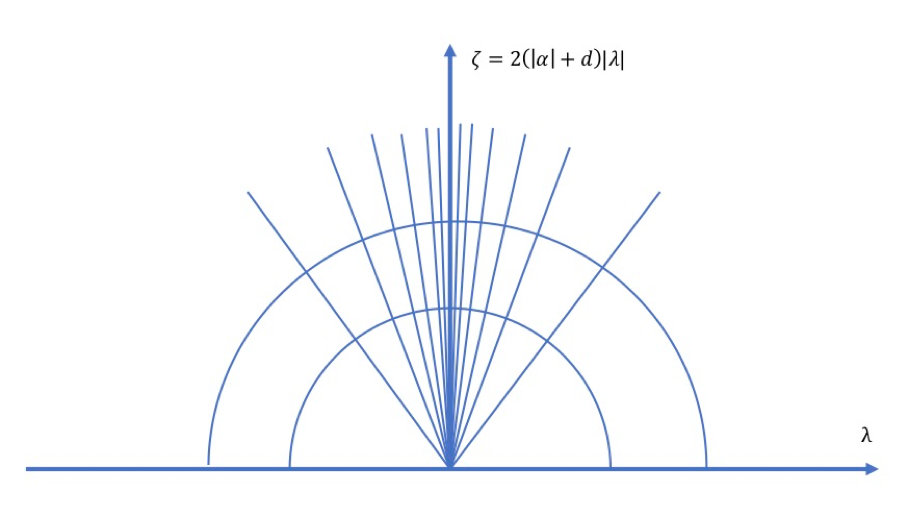}
\caption{Heisenberg's fan }
\label{fan}
\end{figure}

\medskip

Putting~$\lambda$ on the horizontal axis  and $\zeta$ on the vertical axis, the set of half lines $\zeta=|\lambda| (2|\alpha|+d)$ when~$\alpha$ describes $\N^d$ concentrates on the vertical line as pictured in Figure~\ref{fan}.
This figure is known as the Heisenberg fan. 
It can be viewed as the spectrum of the convolution algebra of radial and integrable functions on $G$ as well as the joint spectrum of $-\Delta_G, iZ_1,\ldots,Z_p$, see for instance \cite{ADR,FRY1}.


\section{Semi-classical pseudodifferential operators}

As mentioned above, the Fourier transform being operator-valued, so will be the symbols of semi-classical pseudodifferential operators that we shall now define. 

\subsection{The algebra of symbols $\mathcal A_0$}

We denote by $\mathcal A_0$ the space of symbols 
$\sigma = \{\sigma(x,\pi) : (x,\pi)\in G\times \widehat G\}$ of the form 
$$
\sigma(x,\lambda)=\mathcal F \kappa_x (\lambda) = \int_G \kappa_x(y) (\pi^\lambda_x)^* dx, 
$$
where $x\mapsto \kappa_x(y)$ is a smooth and compactly supported function from $G$ to $\mathcal S(G)$.
Being compactly supported means that $\kappa_x(y)=0$ for $x$ outside a compact of $G$ and any $y\in G$.

\begin{remark}\label{rem:sigmamu} In
 the case of representations of finite dimension,  we  distinguish between all the finite dimensional representations by replacing $\lambda=0$ by the parameters $(0,\omega)$, $\omega\in \mathfrak v^*$. Besides, 
 the  operator $\mathcal F \kappa_x(0,\omega)= \sigma(x,(0,\omega))$ reduces to a complex number because ${\mathcal H}_0=\C$. 
\end{remark}

As the Fourier transform is injective, it yields a one-to-one correspondence between the symbol~$\sigma$ and the function $\kappa$:
we have $\sigma(x,\pi)=\mathcal F \kappa_x(\lambda)$ and conversely the Fourier inversion formula \eqref{inversionformula}
yields 
$$
\forall x,z\in G,\;\; \kappa_x(z)= c_0\int_{\widehat G} {\rm Tr} \left( \pi^{\lambda}_ {z} \sigma(x,\lambda)\right) |\lambda|^d d\lambda.$$
The set $\mathcal A_0$ is an algebra for the composition of symbols since 
if $\sigma_1(x,\lambda)=\mathcal F \kappa_{1,x} (\lambda)$
and $\sigma_2(x,\lambda)=\mathcal F \kappa_{2,x} (\lambda)$
are in $\mathcal A_0$, 
then so is $\sigma_1(x,\lambda)\sigma_2(x,\lambda)
=\mathcal F (\kappa_{2,x}*\kappa_{1,x}) (\lambda)$
by \eqref{fourconv}.

\medskip 

 We can define two norms on $\mathcal A_0$:
one expressed directly on the symbol $\sigma$
 \begin{equation}
 \label{eq_normA}	
  \| \sigma\|_{\mathcal A}:= 
  \sup_{(x,\lambda)\in G\times \widehat G} 
  \|\sigma(x,\lambda)\|_{\mathcal L (L^2(\mathfrak p_\lambda))},
\end{equation}
and the other expressed on the associated function $\kappa_x =\mathcal F^{-1} \sigma(x,\cdot)$
 \begin{equation}
 \label{eq_normA0}	
  \| \sigma\|_{\mathcal A_0}:= \int_G \sup_{x\in G} |\kappa_x (z)|dz.
\end{equation}
From an algebraic viewpoint, they are both submultiplicative on $\mathcal A_0$:
$$
\| \sigma_1\sigma_2\|_{\mathcal A_0}
=
\int_G \sup_{x\in G} |\kappa_{2,x}*\kappa_{1,x} (z)|dz
\leq
\int_G (\sup_{x_2\in G} |\kappa_{2,x_2}|)*(\sup_{x_1\in G}|\kappa_{1,x} |)(z) dz
\leq 
\| \sigma_2\|_{\mathcal A_0}\| \sigma_1\|_{\mathcal A_0},
$$
and 
$$
\| \sigma_1\sigma_2\|_{\mathcal A}
\leq \!\!\!\!\!\!
 \sup_{(x_1,\lambda_1)\in G\times \widehat G} \!\!\!\!\!\!
  \|\sigma_1(x_1,\lambda_1)\|_{\mathcal L (L^2(\mathfrak p_{\lambda_1}))}
 \!\!\!\!\!\!\sup_{(x_2,\lambda_2)\in G\times \widehat G} \!\!\!\!\!\!
  \|\sigma_2(x_2,\lambda_2)\sigma_1(x,\lambda)\|_{\mathcal L (L^2(\mathfrak p_{\lambda_2}))}
  =
\| \sigma_1\|_{\mathcal A}\| \sigma_2\|_{\mathcal A}.
$$
By \eqref{eq_FfnormL1}, we have:
$$
\| \sigma\|_{\mathcal A}\leq 
  \sup_{x\in G} 
  \int_G |\kappa_x(z)| dz
\leq \| \sigma\|_{\mathcal A_0}.
$$
but one can show that the two norms are different. In fact, their counterparts on the abelian group~$\mathbb R^n$ are also different.

\medskip

Although we will not use the following, we observe that the algebra of symbol $\mathcal A_0$ may be defined as the space of smoothing symbols $S^{-\infty}(G)$ which are compactly supported in $x$; 
here the smoothing symbols are to be taken in the sense of \cite{FR}
(Section 5.2 for any graded nilpotent Lie group and Section 6.5 for the Heisenberg group).
In the context of an H-type group, 
a symbol $\sigma(x,\lambda)$ is smoothing when for all $\alpha,\beta \in \mathbb N^{2d+p}$ and for any $k_1,k_2\in \mathbb N$, there exists a constant $C_{\alpha,\beta,k_1,k_2}>0$ such that
\begin{equation}\label{estimate:seminorm}
\forall (x,\lambda)\in G\times (\mathfrak z^*\setminus\{0\})\qquad 
\left\| 
 H(\lambda)^{k_1}  \mathcal X^\beta_x 
 \Delta^\alpha \sigma(x,\lambda)
H(\lambda)^{k_2}
\right\|_{{\mathcal L}(L^2(\R^d))}
\leq C_{\alpha,\beta,k_1,k_2}.
\end{equation}
Here we assume that a basis $X_1,\ldots,X_{2d+p}$ of $\mathfrak g$ has been fixed. As before $\mathcal X^\beta_x$ is the (ordered) product of $|\beta|$ of these left invariant vector fields.
The difference operators $\Delta^\alpha$ are related with the coordinate functions 
 $\phi_1,\ldots, \phi_{2d+p}$ associated with the basis $X_1,\ldots,X_{2d+p}$, i.e. $x = {\rm Exp}(\phi_1(x) X_1 +\ldots + \phi_{2d+p}(x) X_{2d+p})$; denote   by $\Delta_{\phi_1},\ldots, \Delta_{\phi_{2d+p}}$ the corresponding difference operators,
$$
(\Delta_{\phi_j} \sigma)(x,\lambda)= \mathcal F(\phi_j \kappa_x)
\quad\mbox{if} \ \sigma(x,\lambda)=\mathcal F \kappa_x(\lambda).
$$
Then $\Delta^\alpha$ is the product of $|\alpha|$ of such difference operators. Note that two difference operators $\Delta_{\phi_1}$ and $\Delta_{\phi_2}$ of that form will commute since 
$\Delta_{\phi_1}\Delta_{\phi_2}=\Delta_{\phi_1\phi_2}$.  Then, the estimate~(\ref{estimate:seminorm}) above yields semi-norms on $S^{-\infty}(G)$ 
and induces there a topology by inductive limit.

\medskip

More generally, one can always define a difference operator associated with a function $\phi$ of polynomial growth (i.e. $|\phi(x)|\leq (1+|x|)^N$ for some $N\in \mathbb N$) as the operator defined on $\mathcal A_0$ by
\begin{equation}\label{difference}
(\Delta_{\phi} \sigma)(x,\lambda)= \mathcal F(\phi \kappa_x)
\quad\mbox{if} \ \sigma(x,\lambda)=\mathcal F \kappa_x(\lambda).
\end{equation}

\subsection{Semi-classical pseudodifferential operators}

Let $\eps>0$ be a small parameter, the {\it semi-classical parameter} that we shall use to weight the oscillations of the functions that we shall consider. Following~\cite{BFG1} and~\cite{FR}, we quantify the symbols that we have introduced previously by setting  
\begin{equation}
	\label{eq_quantization}
	{\rm Op}_\eps(\sigma) f(x)= c_0 \int_{\widehat G} {\rm Tr} \left(  \pi^{\lambda}_{x} \sigma(x,\eps^2\lambda) {\mathcal F} f(\lambda)  \right)|\lambda|^d \,d\lambda,\;\; f\in{\mathcal S}(G).
\end{equation}
The kernel of the operator ${\rm Op}_\eps(a)$ is the function 
$$
G\times G\ni (x,y)\mapsto \kappa^\eps_x(y^{-1} x)
$$ where $\kappa^\eps_x(z)=\eps^{-Q} \kappa_x\left(\delta_{\eps^{-1}} z\right)$ 
and $\kappa_x $ is such that $\mathcal F(\kappa_x)(\lambda)=\sigma(x,\lambda)$.
For this reason, we call $\kappa_x$ the {\it convolution kernel} of $\sigma$.

\medskip 

The norm $\|\cdot \|_{\mathcal A_0}$ defined in \eqref{eq_normA0}
allows us to bound the 
action of the symbols in ${\mathcal A_0}$ (and more generally in $S^{-\infty}(G)$) on $L^2(G)$:

\begin{proposition}
\label{prop_L2bdd}
Let $\sigma\in {\mathcal A}_0$, then ${\rm Op}_\eps(\sigma)$ is bounded in $L^2(G)$. Moreover, there exists a constant $C>0$ such that for all 
$$\forall \sigma\in {\mathcal A}_0,\;\;
\forall \eps>0,\;\;\| {\rm Op}_\eps(\sigma)\|_{{\mathcal L}(L^2(G))}\leq C \,\| \sigma\|_{\mathcal A_0}.$$
\end{proposition}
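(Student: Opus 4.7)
The approach is to read off the integral kernel of $\mathrm{Op}_\eps(\sigma)$ and then apply Schur's test. By the discussion preceding the proposition, we have
$$\mathrm{Op}_\eps(\sigma)f(x) = \int_G \kappa^\eps_x(y^{-1}x)\, f(y)\, dy, \qquad \kappa^\eps_x(z) = \eps^{-Q}\kappa_x(\delta_{\eps^{-1}}z),$$
so the Schwartz kernel is $K_\eps(x,y) = \kappa^\eps_x(y^{-1}x)$. It is enough to check the two Schur bounds
$$\sup_{x\in G} \int_G |K_\eps(x,y)|\, dy \le \|\sigma\|_{\mathcal A_0}, \qquad \sup_{y\in G} \int_G |K_\eps(x,y)|\, dx \le \|\sigma\|_{\mathcal A_0},$$
as then the standard Schur lemma yields $\|\mathrm{Op}_\eps(\sigma)\|_{\mathcal L(L^2(G))} \le \|\sigma\|_{\mathcal A_0}$ with $C = 1$.

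For the first bound, I would fix $x$ and change variables via $z = y^{-1}x$; left-invariance of the Haar measure gives $dy = dz$, and then the homogeneity relation \eqref{homogenedim} applied to $\kappa^\eps_x(z) = \eps^{-Q}\kappa_x(\delta_{\eps^{-1}}z)$ together with the substitution $z' = \delta_{\eps^{-1}}z$ (Jacobian $\eps^{-Q}$) produces
$$\int_G |\kappa^\eps_x(y^{-1}x)|\, dy = \int_G |\kappa^\eps_x(z)|\, dz = \int_G |\kappa_x(z')|\, dz' \le \int_G \sup_{x'\in G} |\kappa_{x'}(z')|\, dz' = \|\sigma\|_{\mathcal A_0}.$$
Taking the supremum over $x$ preserves the estimate.

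For the second bound, fix $y$ and change variables via $u = y^{-1}x$, i.e.\ $x = yu$, so $dx = du$. Crucially, the $x$-dependence of the amplitude $\kappa^\eps_x$ becomes an $x$-dependence $\kappa^\eps_{yu}$ which we dominate pointwise by the supremum in the first variable before integrating:
$$\int_G |\kappa^\eps_x(y^{-1}x)|\, dx = \int_G |\kappa^\eps_{yu}(u)|\, du \le \int_G \sup_{x'\in G} |\kappa^\eps_{x'}(u)|\, du,$$
and the dilation substitution $u = \delta_\eps z'$ reduces this last integral to $\int_G \sup_{x'} |\kappa_{x'}(z')|\, dz' = \|\sigma\|_{\mathcal A_0}$, independently of $y$.

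The only subtlety is the second bound, where the $x$-dependence of the convolution kernel prevents an exact equality and forces the pointwise majoration by $\sup_{x'}|\kappa^\eps_{x'}(\cdot)|$; this is precisely why the definition of $\|\sigma\|_{\mathcal A_0}$ in \eqref{eq_normA0} puts the supremum in $x$ \emph{inside} the integral over $z$. Once both Schur integrals are controlled by $\|\sigma\|_{\mathcal A_0}$, the proposition follows with constant $C = 1$ (independent of $\eps$).
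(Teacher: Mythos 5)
Your proof is correct and is essentially the paper's argument: the paper also dominates the kernel pointwise by the $x$-independent majorant $\sup_{x_1}|\kappa^\eps_{x_1}(y^{-1}x)|$ and then bounds the resulting convolution operator by its $L^1$-norm, which it does via Young's inequality rather than Schur's test; since the dominating kernel is a right-convolution kernel, the two tools coincide here and yield the same constant $C=1$.
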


\begin{proof}
We observe that if $f\in \mathcal S(G)$ then 
$$
|{\rm Op}_\eps (\sigma)f (x)| =
|\int_G f(y) \kappa_x^{\eps} (y^{-1}x) dy|
\leq
\int_G |f(y)| \sup_{x_1\in G} |\kappa_{x_1}^{\eps} (y^{-1}x)|\ dy
= |f|* \sup_{x_1\in G} |\kappa_{x_1}^{\eps}(\cdot)|(x), 
$$
so the Young convolution inequality implies
$$
\|{\rm Op}_\eps (\sigma)f \|_{L^2(G)}
\leq 
\|f\|_{L^2(G)} \|\sup_{x_1\in G} |\kappa_{x_1}^{\eps}(\cdot)|\|_{L^1(G)}.
$$
We recognise this $L^1$-norm as $\|\sigma\|_{\mathcal A_0}$:
$$
\|\sup_{x_1\in G} |\kappa_{x_1}^{\eps}(\cdot)|\|_{L^1(G)}
=
\|\sup_{x_1\in G} |\kappa_{x_1}(\cdot)|\|_{L^1(G)}=\|\sigma\|_{\mathcal A_0}.
$$
\end{proof}

Notice that the boundedness of semi-classical pseudodifferential operators of symbols of the form $\tau(x,\lambda)= a(x)b(\lambda)$ is much easier and, since  ${\rm Op}_\eps(\tau)$ simply is the composition of the operator of multiplication by $a(x)$ and of the Fourier multiplier $b(\eps^2 \lambda)$, one has 
\begin{equation}\label{simplebound}
\| {\rm Op}_\eps(\tau)\|_{{\mathcal L}(L^2(G))}\leq \| \tau \|_{{\mathcal A}}.
\end{equation}
However, similarly to what happens in the Euclidean case, when the link between $x$ and $\lambda$ is more intricate inside the symbol $\sigma(x,\lambda)$, the estimate of ${\rm Op}_\eps(\sigma)$ requires a more elaborate norm, implying bound on the derivatives. The use of the norm $\|\cdot\|_{\mathcal A_0}$ is particularly convenient in this semi-classical frame. 
However, instead of $\|\cdot\|_{\mathcal A_0}$, we could have chosen a suitable semi-norm on $S^{-\infty}$ which guarantees the $L^2(G)$-boundedness of the corresponding operator, for instance a semi-norm on the class of symbol $S^0(G)$ from \cite[Theorem 5.4.17]{FR}.
 Then, with this seminorm instead of $\|\cdot\|_{\mathcal A_0}$, we would obtain an estimate similar to the one in Proposition~\ref{prop_L2bdd} for all $\eps \in (0,1)$.
 
 \medskip 

Note secondly that 
our quantization in \eqref{eq_quantization} is the analogue of the Kohn-Nirenberg quantization or left quantization on the abelian group $\mathbb R^n$ where it is more common to use the Weyl quantization, which is linked with the Wigner transform, 
see Section \ref{subsec_Wtransf}.

\medskip 
 
Let us conclude with a remark about the  generalisation of our result. 

\begin{remark}
The elements  above can be similarly developed for any graded Lie group $G$. Denoting by $\pi$ the elements of the set $\widehat G$ of its representations, it is possible to extend the dilations to $\widehat G$ and to define $\eps\cdot \pi$ (see Section~2.3 of~\cite{FF}) and for $\sigma\in S^{-\infty}(G)$, we set 
$${\rm Op}_\eps(\sigma)= c_0 \int_{\pi\in\widehat G} {\rm Tr}_{L^2({\mathcal H}_\pi)} \left(  \pi(x) \sigma(x,\eps\cdot \pi) {\mathcal F} f(\pi)  \right)d\mu(\pi),$$
where $d\mu(\pi)$ is the Plancherel measure on $\widehat G$. 
\end{remark}

In the next two paragraphs, we emphasize properties of the semi-classical pseudodifferential operators and revisit the difference operators which can be precisely computed in the context of H-type groups and appear in the symbolic calculus.

\subsection{Role of the diagonal in the kernel of a semi-classical pseudodifferential operator}

The diagonal plays an important role for the integral kernel of ${\rm Op}_\eps(\sigma)$ because that is where the singularities will lie. In fact, one can always assume that the function $(x,z)\mapsto \kappa_x(z)$ is compactly supported close to $z=0$ (note that $\kappa_x(z)$ is compactly supported in $x$ by definition):

\begin{proposition}\label{prop:diag}
Let $\chi\in{\mathcal C}^\infty_0(G) $ be identically equal to $1$ close to $0$. 
For every $\eps>0$ and $\sigma = \mathcal F(\kappa_x)\in \mathcal A_0$, 
the symbol defined via $\sigma_\eps(x,\lambda) = \mathcal F ( \kappa_x \chi(\delta_{\eps} \cdot )) (\lambda)$
is  in $\mathcal A_0$ and its  kernel 
$$(x,y)\mapsto \eps^{-Q} \kappa_x(\delta_{\eps^{-1}} (xy^{-1})\chi(x y^{-1})$$ 
is compactly supported close to the diagonal $x=y$. 
For all $N\in\N$, there exists a constant $C=C_{N,\sigma}>0$ such that 
$$
\forall \eps>0\qquad
\left\|  \sigma_\eps-\sigma\right\|_{\mathcal A_0}
\leq C {\eps}^{QN}.
$$
\end{proposition}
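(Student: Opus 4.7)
The plan is to work at the level of convolution kernels. The difference $\sigma_\eps - \sigma$ has convolution kernel $\kappa_x(z)(\chi(\delta_\eps z) - 1)$; the factor $\chi(\delta_\eps z) - 1$ vanishes on a ball that grows like $\eps^{-1}$, while $\kappa_x$ is Schwartz in $z$ uniformly in $x$. Combining these two facts will give arbitrary polynomial decay in $\eps$.

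More precisely, I fix a homogeneous quasi-norm $|\cdot|_G$ on $G$ compatible with the dilations $(\delta_t)_{t > 0}$, so that $|\delta_t z|_G = t |z|_G$. Since $\chi \equiv 1$ on a neighbourhood of $0$, there is $r_0 > 0$ with $\chi(w) = 1$ for $|w|_G \leq r_0$, hence $\chi(\delta_\eps z) = 1$ as soon as $|z|_G \leq r_0/\eps$. Unwinding the definition of $\|\cdot\|_{\mathcal A_0}$,
$$
\|\sigma_\eps - \sigma\|_{\mathcal A_0}
= \int_G \sup_{x \in G} |\kappa_x(z)|\,|\chi(\delta_\eps z) - 1|\, dz
\leq (1 + \|\chi\|_\infty) \int_{|z|_G \geq r_0/\eps} \sup_{x \in G} |\kappa_x(z)|\, dz.
$$
The hypothesis that $x \mapsto \kappa_x$ is smooth and compactly supported as a map $G \to \mathcal S(G)$ yields, by continuity together with compactness of the support in $x$, a uniform Schwartz estimate
$$
\sup_{x \in G} |\kappa_x(z)| \leq C_M (1 + |z|_G)^{-M}, \qquad M \in \mathbb N.
$$
Since $\int_{|z|_G \geq R} (1 + |z|_G)^{-M}\, dz = O(R^{Q - M})$ as $R \to \infty$ for $M > Q$, this gives $\|\sigma_\eps - \sigma\|_{\mathcal A_0} \leq C_M' \eps^{M - Q}$; for a given $N$, choosing $M = QN + Q + 1$ produces the desired bound $C \eps^{QN}$.

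It remains to check that $\sigma_\eps$ actually belongs to $\mathcal A_0$ and that the integral kernel of ${\rm Op}_\eps(\sigma_\eps)$ concentrates on the diagonal. The function $(x, z) \mapsto \kappa_x(z) \chi(\delta_\eps z)$ is smooth in $x$, compactly supported in $x$ (inherited from $\kappa$), and, being the product of a Schwartz function by a smooth compactly supported cut-off in $z$, is even compactly supported in $z$; hence $\sigma_\eps \in \mathcal A_0$. Applying the kernel formula recalled just before the statement to this truncated convolution kernel directly reproduces $(x, y) \mapsto \eps^{-Q} \kappa_x(\delta_{\eps^{-1}}(xy^{-1})) \chi(xy^{-1})$, and the $\chi$-factor confines the support to a uniform neighbourhood of the diagonal.

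There is no serious obstacle in this argument; the only point deserving attention is the uniform-in-$x$ Schwartz bound on $\kappa_x$, which is immediate from the fact that the $x$-support is compact and $x \mapsto \kappa_x$ is continuous into $\mathcal S(G)$ with its Fr\'echet topology.
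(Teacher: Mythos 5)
Your proof is correct and takes essentially the same route as the paper's: both exploit the vanishing of $\chi-1$ near the origin together with the uniform Schwartz decay of $z\mapsto\kappa_x(z)$, combined with homogeneity of the dilations. The only cosmetic difference is that the paper factorizes $\chi(y)-1=\theta(y)\,\|y\|^N$ with $\theta$ bounded and uses $\|\delta_\eps z\|=\eps\|z\|$ to pull out the power of $\eps$, whereas you restrict the integral to the region $|z|_G\geq r_0/\eps$ where $\chi(\delta_\eps z)-1$ may be nonzero and then use the Schwartz bound directly; both manipulations yield arbitrary polynomial decay in $\eps$, and your bookkeeping of the power of $\eps$ is clean.
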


\begin{proof}
As the function $\chi$ is identically~$0$ close to $z=0$, for all $N\in\N$, there exists a bounded smooth function $\theta_N$  
such that 
$$\forall y\in G,\;\; \chi(y)-1=\theta(y)\| y\|^N,$$
where $\|y\| = (|V|^4 +|Z|^2)^{1/4}$ for $y={\rm Exp}(V+Z)\in G$.
Note that $\|\eps y\|=\eps \|y\|$.
We also set
$$b^{(N)}_x(z):= \| z\| ^N \kappa_x(z)$$
We compute easily
$$
\left\|  \sigma_\eps - \sigma\right\|_{\mathcal A_0}
=\eps^{NQ}   \int_G \sup_{x\in G} |b^{(N)}_x(z)  \theta(\delta_\epsilon z)|dz
\leq \eps^{NQ} \|\theta\|_{L^\infty} \int_G \sup_{x\in G} |b^{(N)}_x(z)  |dz.
$$
As this last integral is finite, this concludes the proof.
\end{proof}

\subsection{Difference operators of H-type groups}\label{sec:difference}

We now study the difference operators rapidly introduced in~(\ref{difference}).  
We fix orthonormal bases $V_1,\ldots V_{2d}$ of $\mathfrak v$ and $Z_1,\ldots,Z_p$ of $\mathfrak z$.
We denote by $v_1,\ldots,v_{2d}$ the  coordinates of a vector 
$V=v_1V_1+\ldots + v_{2d}V_{2d}$ in $\mathfrak v$
and by $z_1,\ldots,z_p$ the  coordinates of a vector 
$Z=z_1Z_1+\ldots+z_pZ_p$ in $\mathfrak z$. We associate with the functions $v_1,\cdots, v_{2d}$ the difference operators $\Delta_{v_1},\cdots,\Delta_{v_{2d}}$ as defined in~(\ref{difference}). In the case of H-type groups, as for the Heisenberg group (see~\cite[Section 6.3]{FR}), these  operators  simply express in terms of the parameter $\lambda\in {\mathfrak z}^*\setminus\{0\}$. 

\medskip

Recall that for each $\lambda\in \mathfrak z^*\setminus\{0\}$, 
we have already fixed in Section \ref{Fourier}
a $\lambda$-depending orthonormal basis $(P_1,\ldots,P_d,Q_1,\ldots,Q_d)$ with respect to which the representation $\pi^\lambda$ can be conveniently described. 
The corresponding difference operators considered not at every $\lambda' \in \mathfrak z^*\setminus\{0\}$ but just at $\lambda'=\lambda$ are also easy to compute:  
\begin{align}\label{def:Deltap}
\Delta_{p_j}\widehat f (\lambda) 
&= |\lambda|^{-1/2}
[\xi_j, \widehat f (\lambda)] 
= -i|\lambda|^{-1}[\pi^\lambda(Q_j) , \widehat f(\lambda)]
\quad\mbox{denoted by}\ \Delta_{p_j}^{\lambda} \widehat f
\\
\label{def:Deltaq}
\Delta_{q_j}\widehat f (\lambda) 
&= 
|\lambda|^{-1/2}
[i\partial_{\xi_j}, \widehat f (\lambda)] 
=	
i|\lambda|^{-1}[\pi^\lambda(P_j) , \widehat f(\lambda)]
\quad\mbox{denoted by}\ \Delta_{q_j}^{\lambda} \widehat f,
\end{align}
and   for $1\leq m\leq p$,
$$\Delta_{z_m}\widehat f (\lambda)  = i\partial_{\lambda_m} \widehat f (\lambda) +{1\over 2} {\lambda_m\over |\lambda|} \Delta_p\cdot \Delta_q \widehat f (\lambda) +{i\over 2}  {\lambda_m\over |\lambda|}\pi^\lambda(Q)\cdot \nabla_q \widehat f (\lambda) + {i\over 2}  {\lambda_m\over |\lambda|}\Delta_p\cdot \Pi^\lambda(P)\widehat f (\lambda) $$
that we denote by $ \Delta_{z_m}^{\lambda} \widehat f.$
Indeed,
the proof of \cite[Lemma 6.3.1]{FR} for the case of the Heisenberg group  extends naturally to H-type groups
because of the link between the representation $\pi^\lambda$ quotiented by its kernel and the Schr\"odinger representation of the Heisenberg group already mentioned in Section \ref{defirreducible}.
For the same reasons (see \cite[Examples 6.3.4 and 6.3.5]{FR}), we also obtain
\begin{equation}
\label{eq_DeltaH}	
\Delta_{p_j}^\lambda H(\lambda) = 2\pi^\lambda(P_j)=2|\lambda|\partial_{\xi_j}
\qquad\mbox{and}\qquad
\Delta_{q_j}^\lambda H(\lambda) = 2\pi^\lambda(Q_j)=2i|\lambda|{\xi_j},
\end{equation}
 where $H(\lambda)= -{\mathcal F} \Delta_G(\lambda)$ has been defined in~(\ref{def:H}) and~(\ref{def:H2}).
The notation $\Delta_{p_j}^{\lambda} $, $\Delta_{q_j}^{\lambda}$ and $\Delta_{z_m}^\lambda$ emphasises the fact that these are not difference operators.
However, they are helpful when expressing the difference operators $\Delta_{v_1},\cdots,\Delta_{v_n}$:

\begin{proposition}\label{prop:diffop}
Consider 
an orthogonal matrix $M^\lambda$ realising the change of basis from $(V_1,\ldots,V_{2d})$ to $(P_1,\ldots,P_d,Q_1,\ldots,Q_d)$.
Then, in vectoriel notation,
$$
\Delta_{v} \widehat f(\lambda)=
\begin{pmatrix}\Delta_{v_1} \widehat f(\lambda)
  \\ \vdots \\ \Delta_{v_{2d}} \widehat f(\lambda)
\end{pmatrix}
= (M^\lambda)^{-1}
\begin{pmatrix}
\Delta_{p}^\lambda \widehat f\\
\Delta_{q}^\lambda \widehat f
\end{pmatrix}
=|\lambda|^{-1/2}(M^\lambda)^{-1}
\begin{pmatrix} 
[\xi, \widehat f (\lambda)]  \\ 
[i\partial_{\xi}, \widehat f (\lambda)] 
\end{pmatrix}.
$$
\end{proposition}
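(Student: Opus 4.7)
The plan is to reduce the statement to a linear change-of-variables identity at the level of coordinate functions on $\mathfrak v$, and then invoke the previously established formulas for $\Delta_{p_j}^\lambda$ and $\Delta_{q_j}^\lambda$. First, I would note that the definition \eqref{difference} makes $\phi \mapsto \Delta_\phi$ linear in $\phi$: if $\phi = \sum_k c_k \phi_k$, then $\phi \kappa_x = \sum_k c_k (\phi_k \kappa_x)$, and applying the Fourier transform gives $\Delta_\phi \sigma = \sum_k c_k \Delta_{\phi_k}\sigma$. This linearity holds regardless of whether the coefficients $c_k$ depend on $\lambda$, provided one evaluates the difference operators at that fixed $\lambda$.

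Next, I would make the change-of-basis explicit at the level of coordinates on $\mathfrak v$. Since $M^\lambda$ is the orthogonal matrix realising the change of basis from $(V_1,\ldots,V_{2d})$ to $(P_1,\ldots,P_d,Q_1,\ldots,Q_d)$, a vector $V \in \mathfrak v$ has coordinates $v = (v_1,\ldots,v_{2d})^t$ in the first basis and $(p,q) = (p_1,\ldots,p_d,q_1,\ldots,q_d)^t$ in the second, related by $(p,q)^t = M^\lambda v$, hence $v = (M^\lambda)^{-1}(p,q)^t$ as an identity of linear functions on $\mathfrak v$. Componentwise, each coordinate $v_j$ equals a fixed linear combination (depending on $\lambda$) of the coordinate functions $p_1,\ldots,p_d,q_1,\ldots,q_d$.

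Combining these two observations, at the fixed parameter $\lambda$ we can apply the linearity of the difference operator assignment to the linear identity $v_j = \sum_k \bigl((M^\lambda)^{-1}\bigr)_{jk} (p,q)_k$ to obtain
\[
\Delta_{v_j} \widehat f(\lambda) = \sum_{k=1}^{2d} \bigl((M^\lambda)^{-1}\bigr)_{jk} \Delta_{(p,q)_k} \widehat f(\lambda),
\]
which in vector form is precisely the first equality of the proposition. The second equality then follows immediately by substituting the explicit expressions \eqref{def:Deltap} and \eqref{def:Deltaq} for $\Delta^\lambda_{p_k}\widehat f$ and $\Delta^\lambda_{q_k}\widehat f$ in terms of commutators with $\xi$ and $i\partial_\xi$, together with the scalar factor $|\lambda|^{-1/2}$ being pulled out of the matrix product.

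The main conceptual point to handle carefully --- rather than a genuine obstacle --- is the $\lambda$-dependence of the basis $(P_1,\ldots,P_d,Q_1,\ldots,Q_d)$, which forces $M^\lambda$ to depend on $\lambda$ and which is precisely why the operators $\Delta^\lambda_{p_j}$, $\Delta^\lambda_{q_j}$ are defined only at the single point $\lambda' = \lambda$ and not as genuine difference operators across all of $\widehat G$. Once this is acknowledged, the argument is purely linear-algebraic and no analytic estimates are needed; the formulas \eqref{def:Deltap}--\eqref{def:Deltaq}, justified earlier by transfer from the Heisenberg case via the quotient isomorphism $G/\ker\pi^\lambda \cong \mathbb H_d$, supply all the nontrivial content.
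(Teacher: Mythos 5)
Your argument is correct and is essentially the same as the paper's proof: both rest on the coordinate identity $(p,q)^t = M^\lambda v$ coming from the change of basis, followed by linearity of $\phi\mapsto\Delta_\phi$ applied at the fixed parameter $\lambda$ and substitution of \eqref{def:Deltap}--\eqref{def:Deltaq}. The paper simply writes down the change-of-variables relation and states ``the formula follows,'' whereas you make explicit the linearity step and the fact that the $\lambda$-dependent coefficients are harmless because everything is evaluated at that single $\lambda$ --- a useful clarification, but not a different route.
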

\begin{proof}
The definition of $M^\lambda$ means that
if 
$$V=v_1V_1+\ldots + v_{2d}V_{2d} = p_1P_1+\ldots+ p_dP_d+ q_1Q_1+\ldots+q_dQ_d,$$ 
then
$$
\begin{pmatrix} p \\ q\end{pmatrix} = M^\lambda v,
\quad\mbox{where}\quad 
p=\begin{pmatrix}p_1\\ \vdots \\ p_d\end{pmatrix}, 
q=\begin{pmatrix}q_1\\ \vdots \\ q_d\end{pmatrix},
v=\begin{pmatrix}v_1\\ \vdots \\ v_{2d}\end{pmatrix}
$$
and the formula follows. 
\end{proof}

Note that due to the orthogonality of $M^\lambda$, we have
\begin{equation}
\label{eq_VDeltav}
V_1\Delta_{v_1} + \ldots + V_{2d} \Delta_{v_{2d}}=
V\cdot \Delta_v = (M^\lambda)^{-1} \begin{pmatrix}P\\Q\end{pmatrix}\cdot (M^\lambda)^{-1} \begin{pmatrix}\Delta^\lambda_p\\\Delta^\lambda_q\end{pmatrix}
= P\cdot \Delta^\lambda_p+Q\cdot\Delta^\lambda_q
\end{equation}
where the vector fields $V_j$, $P_j$ and $Q_j$ acts on the variable $x\in G$ whereas the difference operators $\Delta_{v_j}$ act on the variable $\lambda\in \widehat G$ of a symbol. 
In the same way, we have 
\begin{align}
\label{eq_Vpi(V)}	
V\cdot \pi^\lambda(V)
&=V_1 \pi^\lambda(V_1) + \ldots + V_{2d} \pi^\lambda (V_{2d})
\\
&= P_1 \pi^\lambda (P_1) + \ldots + P_d \pi^\lambda(P_d) + Q_1 \pi^\lambda(Q_1) + \ldots + Q_d\pi^\lambda(Q_d)\nonumber
\\
&= P\cdot \pi^\lambda(P)+ Q\cdot \pi^\lambda(Q),\nonumber
	\end{align}
acts on $x$ and $\lambda$ via the vector fields and the multiplication by $\pi^\lambda(V_j)$.

\subsection{Symbolic calculus}

These symbols enjoy a symbolic calculus.

\begin{proposition}\label{prop:symbcal}
Let $\sigma\in {\mathcal A}_0$. Then, in ${\mathcal L}(L^2(G))$, 
\begin{equation}\label{eq:adjoint}
{\rm Op}_\eps(\sigma)^*=  {\rm Op}_\eps (\sigma^*) -{\eps} \,{\rm Op}_\eps(P\cdot \Delta^\lambda_p \sigma^*+Q\cdot \Delta^\lambda_q \sigma^*)+O(\eps^2).
\end{equation}
Let $\sigma_1,\sigma_2 \in {\mathcal A}_0$. Then in ${\mathcal L}(L^2(G))$, 
\begin{equation}\label{eq:composition}
{\rm Op}_\eps(\sigma_1)\circ {\rm Op}_\eps(\sigma_2) = {\rm Op}_\eps(\sigma_1\,\sigma_2) -\eps\,  {\rm Op}_\eps\left( \Delta_p^\lambda \sigma_1 \cdot P\,\sigma_2+\Delta^\lambda_q \sigma_1 \cdot Q\, \sigma_2\right)+O(\eps^2),
\end{equation}
\end{proposition}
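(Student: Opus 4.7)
\emph{Proof plan.} The strategy is to compute the integral kernel of each operator, Taylor-expand the basepoint dependence along the rescaled variable $w=\delta_{\eps^{-1}}z$, and convert the outcome into the $(P,Q)$-formulation via the identities of Section~\ref{sec:difference}.

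\textbf{Adjoint formula \eqref{eq:adjoint}.} Starting from ${\rm Op}_\eps(\sigma)f(x)=\int_G f(xz^{-1})\kappa^\eps_x(z)\,dz$ and the convolution kernel $\kappa^*_x(z)=\overline{\kappa_x(z^{-1})}$ of $\sigma^*$, direct computations give
\[
{\rm Op}_\eps(\sigma)^*g(y)=\int_G \overline{\kappa^\eps_{yz}(z)}\,g(yz)\,dz,\qquad
{\rm Op}_\eps(\sigma^*)g(y)=\int_G g(y\,\delta_\eps w)\,\overline{\kappa_y(w)}\,dw.
\]
Their difference is governed by $\kappa_{y\cdot\delta_\eps w}(w)-\kappa_y(w)$; since $\delta_\eps w={\rm Exp}(\eps V+\eps^2 Z)$ when $w={\rm Exp}(V+Z)$, a first-order Taylor expansion along the left-invariant vector fields $V_1,\dots,V_{2d}$ yields $\eps\sum_{j=1}^{2d} v_j(w)\,(V_j\kappa_{(\cdot)}(w))(y)+O(\eps^2)$. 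I would then identify $v_j(w)\kappa_y(w)$ as the convolution kernel of $\Delta_{v_j}\sigma$ (via \eqref{difference}), use the antisymmetry $v_j(w^{-1})=-v_j(w)$ to derive $(\Delta_{v_j}\sigma)^*=-\Delta_{v_j}\sigma^*$, and apply the identity $V\cdot\Delta_v=P\cdot\Delta^\lambda_p+Q\cdot\Delta^\lambda_q$ of \eqref{eq_VDeltav} to obtain \eqref{eq:adjoint}.

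\textbf{Composition formula \eqref{eq:composition}.} Composing the two operators and rewriting the outcome as an integral against $f(xz^{-1})$ produces the kernel
\[
K^\eps_x(z)=\int_G\kappa^\eps_{2,xw^{-1}}(zw^{-1})\,\kappa^\eps_{1,x}(w)\,dw.
\]
Changing variables by $w=\delta_\eps u$ and Taylor-expanding $\kappa_{2,\,x\cdot\delta_\eps u^{-1}}$ in its first argument yields, at leading order, the convolution $\kappa_{2,x}*\kappa_{1,x}$, whose Fourier transform is $\sigma_1\sigma_2$ by \eqref{fourconv}, together with the first-order correction $-\eps\sum_j(V_j\kappa_{2,x})*(u_j\kappa_{1,x})$. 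Taking Fourier transforms and invoking \eqref{eq_VDeltav} once more converts this correction into $-\eps(\Delta^\lambda_p\sigma_1\cdot P\sigma_2+\Delta^\lambda_q\sigma_1\cdot Q\sigma_2)$, which is \eqref{eq:composition}.

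\textbf{Remainder estimates.} The one genuinely analytic step is to bound each second-order remainder in $\mathcal L(L^2(G))$ by $O(\eps^2)$. My plan is to express each remainder as ${\rm Op}_\eps$ of an auxiliary symbol whose convolution kernel is the integral Taylor remainder --- still Schwartz in $w$ and compactly supported in $x$ because $\sigma$ (resp. $\sigma_1,\sigma_2$) is in $\mathcal A_0$ --- then invoke Proposition~\ref{prop_L2bdd} to dominate its operator norm by the corresponding $\|\cdot\|_{\mathcal A_0}$, which inherits the $\eps^2$ factor from the two rescalings $\delta_\eps$ appearing in the second-order Taylor term. This bookkeeping, rather than the algebraic manipulations above, is where the most care is required; an alternative route is to import the symbolic calculus on graded Lie groups from~\cite{FR} for the class $S^{-\infty}$.
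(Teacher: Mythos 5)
Your proof plan is correct and follows essentially the same route as the paper's argument in Appendix~A (Section~\ref{app_symbcal}): write out the convolution kernel of each operator, Taylor-expand the base-point dependence along left-invariant vector fields, read off the zeroth- and first-order terms as convolutions whose Fourier transforms give the claimed symbols via~\eqref{fourconv}, \eqref{difference} and~\eqref{eq_VDeltav}, and bound the remainder in $\mathcal L(L^2(G))$ through the $\|\cdot\|_{\mathcal A_0}$-norm and Proposition~\ref{prop_L2bdd}. The only place where your sketch is less precise than the paper is the remainder bookkeeping: the paper invokes the stratified Taylor formula (Theorem~3.1.51 of \cite{FR}) so that the leftover terms have \emph{homogeneous} degree $[\alpha]\geq 2$ (two first-stratum derivatives, or one central derivative), which is what uniformly produces the $\eps^2$ factor in $\|\cdot\|_{\mathcal A_0}$.
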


Note that in \eqref{eq:adjoint}, we can replace $P\cdot \Delta^\lambda_p \sigma^*+Q\cdot \Delta^\lambda_q \sigma^*$ with $V\cdot \Delta_v \sigma^*$, see \eqref{eq_VDeltav}.
Similarly, we can replace $\Delta_p^\lambda \sigma_1 \cdot P\,\sigma_2+\Delta^\lambda_q \sigma_1 \cdot Q\, \sigma_2$ with $\Delta_v \sigma_1\cdot V\sigma_2$ in \eqref{eq:composition}.

\medskip

A proof of Proposition \ref{prop:symbcal} is given in Appendix Section \ref{app_symbcal}. It follows the lines of the proofs of \cite[Section 5.5]{FR} with major simplification due to the semi-classical setting (as in the Euclidean case). The proof relies on Taylor formula, and, depending on the order to which this Taylor formula is pushed, one obtains more or less precise asymptotic expansions of the symbols. As in the Euclidean case, these expansions can be realized at any order. 

\begin{remark}
The difference operators $\Delta_{v_j}$  play the role of $-i\partial_{\xi_j}$ in the Euclidean setting. The reader will be able to check that the symbolic calculus formula are exactly the same as in the case of Kohn-Niremberg quantization in the Euclidean setting (see formula (3.17) and (3.18) in~\cite{gerard_X}).
\end{remark}


\section{Semi-classical measures}

Semi-classical measures are the adaptation of microlocal defect measures in a context where a  scale of oscillations is specified. 
This scale which is the semi-classical scale $\eps$ is prescribed by the sequence of family of functions to study or by the parameters of a given problem. 
Naturally, the section below highly relies on the work~\cite{FF}  which is devoted to microlocal defect measures on graded Lie groups. In particular, we use here a similar $C^*$-algebra approach.

\subsection{The $C^*$-algebra ${\mathcal A}$ of semi-classical symbols} 

We  introduce the algebra
 ${\mathcal A}$ which is  the closure of~${\mathcal A}_0$ for the norm
$\| \cdot\|_{\mathcal A}$ given in \eqref{eq_normA}. The algebra 
 ${\mathcal A}$ enjoys the properties of a $C^*$-algebra and one can identify its spectrum in the following way:

\begin{proposition}
\label{prop:C*A}
 The set ${\mathcal A}$ is a separable $C^*$-algebra of type~1.
It is not unital  but admits an approximation of identity. 
Besides, if $\lambda_0\in \widehat G$ and $x_0\in G$, then the mapping 
$$
\left\{\begin{array}{lll}
{\mathcal A}_0 &\longrightarrow& {\mathcal L}({\mathcal H}_{\lambda_0})\\
\sigma&\longmapsto & \sigma(x_0,\lambda_0)
\end{array}\right. \quad
$$
extends to a continuous mapping $\rho_{x_0,\pi_{\lambda_0}}:{\mathcal A}\to {\mathcal L}({\mathcal H}_{\lambda_0})$ which is an irreducible non-zero representation of ${\mathcal A}$. 
In fact, this is true for $\pi_0=\pi^{\lambda_0}$ and for $\pi_0=\pi^{0,\omega_0}$. Furthermore, the mapping
$$
R:\left\{\begin{array}{lll}
 G\times \widehat G &\longrightarrow&\widehat{\mathcal A} \\
 (x_0, \pi_0) &\longmapsto& \rho_{x_0,\pi_0}
 \end{array}\right.
 $$
is a homeomorphism which allows to identify  $\widehat {\mathcal A}$ with $G\times \widehat G$.
\end{proposition}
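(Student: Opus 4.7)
The plan is to identify the $C^*$-algebra $\mathcal{A}$ with a well-known object and then to read off each of the four assertions. Via the correspondence $\sigma \leftrightarrow (x\mapsto \kappa_x)$, the algebra $\mathcal{A}_0$ sits inside $C_0(G; C^*(G))$, where $C^*(G)$ denotes the group $C^*$-algebra of $G$. Indeed, the composition \eqref{fourconv} corresponds to pointwise multiplication in $x$ and convolution in the kernel variable, the adjoint corresponds to $\kappa_x^*(y)=\overline{\kappa_x(y^{-1})}$, and, by Plancherel's theorem, for fixed $x$ we have $\sup_{\lambda\in\widehat G}\|\sigma(x,\lambda)\|_{\mathcal{L}(L^2(\mathfrak{p}_\lambda))}=\|\kappa_x\|_{C^*(G)}$, so $\|\sigma\|_\mathcal{A}=\sup_{x\in G}\|\kappa_x\|_{C^*(G)}$. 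A partition-of-unity and mollification argument, together with density of $\mathcal{S}(G)$ in $C^*(G)$, shows that $\mathcal{A}_0$ is dense in $C_0(G; C^*(G))$, giving the isometric identification
\[
\mathcal{A} \ \simeq\ C_0(G)\otimes C^*(G),
\]
where the $C^*$-tensor product is unambiguous since $C_0(G)$ is commutative, hence nuclear.

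From this identification the first two bullets follow routinely. Separability of $\mathcal{A}$ reduces to separability of each factor, which holds since $G$ is second countable. The type~1 property of $C^*(G)$ is Dixmier's classical theorem for connected simply connected nilpotent Lie groups, and tensoring a type~1 algebra with a commutative (hence type~1) one preserves type~1. Non-unitality follows from $G$ being neither compact nor discrete, and a bounded approximate identity is automatic in any $C^*$-algebra (and is here produced concretely by taking $\kappa_x = \chi(x)\,\eta_n$ with $\chi$ exhausting $G$ and $\{\eta_n\}$ an approximate identity of $C^*(G)$).

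For the representations $\rho_{x_0,\pi_0}$, I would first note that evaluation at $(x_0,\pi_0)$ satisfies $\|\sigma(x_0,\pi_0)\|_{\mathcal{L}(\mathcal{H}_{\pi_0})}\le \|\sigma\|_\mathcal{A}$ by the very definition \eqref{eq_normA}, so it extends continuously from $\mathcal{A}_0$ to $\mathcal{A}$; it is a $*$-homomorphism by \eqref{fourconv}. Irreducibility comes from the irreducibility of the unitary representation $\pi_0$ of $G$: choosing $\sigma$ of the form $\kappa_x=\phi(x)\kappa$ with $\phi(x_0)=1$ and $\kappa\in\mathcal{S}(G)$ arbitrary, the image of $\rho_{x_0,\pi_0}$ is norm-dense in $\pi_0(C^*(G))$, which is irreducible by Schur's lemma. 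Non-vanishing is immediate by picking $\kappa$ with $\pi_0(\kappa)\ne 0$. The same construction applies uniformly to $\pi^{\lambda_0}$ (for $\lambda_0\in\mathfrak z^*\setminus\{0\}$) and to $\pi^{0,\omega_0}$ (for $\omega_0\in\mathfrak v^*$).

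Finally, to identify $\widehat{\mathcal{A}}$ with $G\times\widehat G$, I would invoke the general fact that, for type~1 nuclear $C^*$-algebras, the spectrum of a tensor product is the product of the spectra, yielding $\widehat{C_0(G)\otimes C^*(G)}\simeq G\times \widehat{C^*(G)}$, and then Kirillov's theory, already recalled in \eqref{eq_widehatG}, to identify $\widehat{C^*(G)}$ with $\widehat G$. Under these identifications, $(x_0,\pi_0)$ corresponds exactly to $\rho_{x_0,\pi_0}=\mathrm{ev}_{x_0}\otimes \pi_0$, which proves bijectivity of $R$. The main technical point, and the step I expect to be the most delicate, is that $R$ is a homeomorphism when $\widehat{\mathcal{A}}$ carries the Jacobson topology and $G\times \widehat G$ the product of the Euclidean topology with the Fell topology on $\widehat G$. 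This is a Dauns–Hofmann-style statement for the trivial $C^*$-bundle $G\ni x\mapsto C^*(G)$ and reduces to checking that hull-kernel convergence in $\widehat{\mathcal{A}}$ decouples into convergence in each factor; the Fell topology on $\widehat G$ is itself well understood for H-type groups, and the degenerations $\lambda\to 0$ it encodes (with simultaneous $|\alpha|\to\infty$ producing characters $\pi^{0,\omega}$) are exactly those reflected in Lemma~\ref{lem:dimfinie}.
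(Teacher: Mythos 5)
Your proof is correct and takes essentially the same approach as the paper: identifying $\mathcal{A}$ with $C_0(G;C^*(G))\cong C_0(G)\otimes C^*(G)$ and deducing all four assertions from standard facts about nilpotent group $C^*$-algebras, nuclear tensor products, and the spectrum of a tensor product. The paper's own proof is only a brief sketch referring to Section~5 of the companion work~\cite{FF}; your version spells out the same steps explicitly.
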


The proof follows the lines of \cite[Section 5]{FF}. It utilises the fact  that, by definition,  the $C^*$ algebra $C^*(G)$ of the group $G$ is the closure of $\mathcal F \mathcal S(G)$ for $\sup_{\lambda\in \widehat G} \|\cdot\|_{\mathcal L(\mathcal H_\lambda)}$ and that the spectrum of $C^*(G)$ is $\widehat G$. This implies readily that $\mathcal A$ may be identified with the $C^*$-algebra of continuous functions which vanish at infinity on $G$ and are valued on $C^*(G)$. It also implies that its spectrum is as described in Proposition \ref{prop:C*A}.

\medskip

We can also describe the states of the $C^*$-algebra $\mathcal A$. Still following \cite[Section 5]{FF}, we will need the following vocabulary:

\begin{definition}
\label{def_gammaGamma}
	Let $Z$ be a complete separable metric space, 
	and let $\xi\mapsto {\mathcal H}_\xi$ a measurable field of complex Hilbert spaces of $Z$.
\begin{itemize}
\item 
	The set 
	${\mathcal M}_1(Z,({\mathcal H}_\xi)_{\xi\in Z})$
	is the set of pairs $(\gamma,\Gamma)$ where $\gamma$ is a positive Radon measure on~$Z$ 
	and $\Gamma=\{\Gamma(\xi)\in {\mathcal L}({\mathcal H}_\xi):\xi \in Z\}$ is a measurable field of trace-class operators such that for all compact set $K\subset  Z$, 
	$$\int_{K}  {\rm Tr}\left| \Gamma(\xi)\right| d\gamma(\xi)<+\infty .$$
 \item 
	Two pairs $(\gamma,\Gamma)$ and $(\gamma',\Gamma')$ 
in ${\mathcal M}_1(Z,({\mathcal H}_\xi)_{\xi\in Z})$
are {equivalent} when there exists a measurable function $f:Z\to \mathbb C\setminus\{0\}$ such that 
$$d\gamma'(\xi) =f(\xi)  d\gamma(\xi)\;\;{\rm  and} \;\;\Gamma'(\xi)=\frac 1 {f(\xi)} \Gamma(\xi)$$ for $\gamma$-almost every $\xi\in Z$.
The equivalence class of $(\gamma,\Gamma)$ is denoted by $\Gamma d \gamma$.
\item 
A pair $(\gamma,\Gamma)$ 
in ${\mathcal M}_1(Z,({\mathcal H}_\xi)_{\xi\in Z})$
 is {positive} when 
$\Gamma(\xi)\geq 0$ for $\gamma$-almost all $\xi\in Z$.
In  this case, we may write $\Gamma d\gamma \geq 0$ or $(\gamma,\Gamma)\in{\mathcal M}^+_1(Z,({\mathcal H}_\xi)_{\xi\in Z})$.
\end{itemize}
\end{definition}

We will use the short-hands
$$
\mathcal M_1^+(G\times \widehat G) = \mathcal M_1^+(Z,({\mathcal H}_\xi)_{\xi\in Z})
\quad\mbox{when}\quad
Z=\{(x,\lambda) \in G\times \widehat G\}, 
\quad \mbox{and}\quad
\mathcal H_{x,\lambda}=\mathcal H_\lambda.
$$ 
We recall  that the Hilbert space ${\mathcal H}_\lambda$ is associated with the representation of $\lambda \in \widehat G$, 
that is, using the description in \eqref{eq_widehatG},
${\mathcal H}_\lambda=L^2(\mathfrak p_\lambda)$ if the representation corresponds to $\lambda \in \mathfrak z^*\setminus\{0\}$ and ${\mathcal H}_{(0,\omega)}=\C$ if $\lambda=0$ and the representation corresponds to $(0,\omega)$ with $\omega\in{\mathfrak v}^*$. 

\medskip

With this concept in mind, the states of the $C^*$-algebra ${\mathcal A}$ can be described as follows:
\begin{proposition}
\label{prop:states}
 If $\ell$ is a state of   ${\mathcal A}$, 
then there exists  $(\gamma,\Gamma)\in
{\mathcal M}_1^+(G\times \widehat G)$, unique up to its equivalence class, 
  satisfying
  \begin{equation}
\label{eq_prop_trGg}
\int_{G\times \widehat G} {\rm Tr} \left(\Gamma(x, \lambda)\right) d\gamma(x, \lambda) =1,
\end{equation}
and 
\begin{equation}
\label{eq_prop_elltrGg}
\forall \sigma\in{\mathcal A}\qquad
\ell(\sigma) = \int_{G\times \widehat G} {\rm Tr}\left(\sigma(x,\lambda) \Gamma(x, \lambda)\right) d\gamma(x, \lambda).
\end{equation}
Conversely, if a pair  
$(\gamma,\Gamma)\in {\mathcal M}_1^+(G\times \widehat G)$ satisfies \eqref{eq_prop_trGg}, then the linear form $\ell$ defined via \eqref{eq_prop_elltrGg} is a state of  ${\mathcal A}$. 
\end{proposition}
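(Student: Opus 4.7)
The plan is to follow closely the structure of~\cite[Section~5]{FF}, reading the statement as a disintegration-of-states result for the separable type~1 $C^*$-algebra $\mathcal A$ whose spectrum was identified with $G\times\widehat G$ in Proposition~\ref{prop:C*A}. For the converse implication, given $(\gamma,\Gamma)\in\mathcal M_1^+(G\times\widehat G)$ satisfying \eqref{eq_prop_trGg}, I would first check that for every $\sigma\in\mathcal A$ the integrand $(x,\lambda)\mapsto \mathrm{Tr}(\sigma(x,\lambda)\Gamma(x,\lambda))$ is measurable (using a measurable selection of bases for the fibres) and invoke the elementary trace inequality $|\mathrm{Tr}(AB)|\le \|A\|_{\mathcal L(\mathcal H_\lambda)}\mathrm{Tr}|B|$ to obtain
\[
|\ell(\sigma)|\le \|\sigma\|_{\mathcal A}\int_{G\times\widehat G}\mathrm{Tr}(\Gamma)\,d\gamma = \|\sigma\|_{\mathcal A}.
\]
Positivity then follows from $\Gamma(x,\lambda)\ge 0$ and the identity $\mathrm{Tr}(\sigma^*\sigma\,\Gamma)=\mathrm{Tr}(\Gamma^{1/2}\sigma^*\sigma\,\Gamma^{1/2})\ge 0$, while $\|\ell\|=1$ is obtained by testing $\ell$ on an approximation of identity $(e_\alpha)\subset \mathcal A$ provided by Proposition~\ref{prop:C*A}: pointwise, $e_\alpha(x,\lambda)\to \mathrm{Id}_{\mathcal H_\lambda}$ strongly, so dominated convergence yields $\ell(e_\alpha)\to 1$.

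For the direct implication, given a state $\ell$, I would apply the GNS construction to produce a cyclic representation $(\pi_\ell,\mathcal H_\ell,v_\ell)$ with $\|v_\ell\|=1$ and $\ell(\sigma)=(\pi_\ell(\sigma)v_\ell,v_\ell)_{\mathcal H_\ell}$. Since $\mathcal A$ is separable and of type~1, Dixmier's disintegration theorem for such representations yields
\[
\pi_\ell \simeq \int^{\oplus}_{G\times\widehat G}\rho_{x,\pi}^{\oplus m(x,\pi)}\,d\gamma(x,\pi),\qquad v_\ell = \int^{\oplus} v(x,\pi)\,d\gamma(x,\pi),
\]
for a positive Radon measure $\gamma$ on $G\times\widehat G$, a measurable multiplicity function $m$, and a measurable field $v(x,\pi)\in\mathcal H_\pi^{\oplus m(x,\pi)}$. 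Setting $\Gamma(x,\pi)$ to be the partial trace over the multiplicity space of the rank-one operator $|v(x,\pi)\rangle\langle v(x,\pi)|$ produces a measurable field of positive trace-class operators on $\mathcal H_\pi$ with $\mathrm{Tr}(\Gamma(x,\pi))=\|v(x,\pi)\|^2$, and the direct-integral formula applied to $(\pi_\ell(\sigma)v_\ell,v_\ell)$ reduces to~\eqref{eq_prop_elltrGg}; the normalisation~\eqref{eq_prop_trGg} then follows from $\|v_\ell\|^2=1$.

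Uniqueness up to equivalence is then obtained by testing against $\mathcal A_0$: its elements separate the points of $\widehat{\mathcal A}$, so two pairs inducing the same $\ell$ must yield equal scalar Radon measures $(x,\pi)\mapsto \mathrm{Tr}(\sigma(x,\pi)\Gamma(x,\pi))\,d\gamma$ for every $\sigma\in\mathcal A_0$, and a standard Radon--Nikodym argument forces them to agree up to the rescaling ambiguity built into Definition~\ref{def_gammaGamma}.

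The delicate point will be the direct-integral decomposition itself: one must construct measurably the field $\Gamma(x,\pi)$ simultaneously over the infinite-dimensional fibres for $\lambda\in\mathfrak z^*\setminus\{0\}$ and the one-dimensional fibres for $(0,\omega)$ with $\omega\in\mathfrak v^*$, pick a measurable reference field of bases for the $\mathcal H_\lambda$'s, and reconcile the abstract decomposition of the enveloping von Neumann algebra with the concrete point evaluations $\rho_{x,\pi_0}$ of Proposition~\ref{prop:C*A}. The equivalence relation in Definition~\ref{def_gammaGamma} is precisely designed to absorb the residual scaling freedom between $\gamma$ and $\Gamma$ that remains after this identification.
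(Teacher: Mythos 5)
Your proposal is correct and follows essentially the same route as the paper. The converse is handled identically (trace inequality, positivity via $\Gamma\ge 0$, and testing against an approximate identity). For the direct implication, both use GNS followed by the type-1 disintegration of the resulting cyclic representation over $\widehat{\mathcal A}\simeq G\times\widehat G$; the paper writes the decomposition as the discrete direct sum $\bigoplus_{r\in\N\cup\{\infty\}} r\,(\rho_r,\widetilde{\mathcal H}_r)$ with mutually singular measures $\gamma_r$ rather than via a multiplicity function $m(x,\pi)$, but these are the same statement, and your choice of $\Gamma(x,\pi)$ as the partial trace of $|v(x,\pi)\rangle\langle v(x,\pi)|$ over multiplicity space agrees with the paper's $\Gamma_r=\sum_{s\le r}\xi_{r,s}\otimes\xi_{r,s}^*$. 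The only genuine divergence is uniqueness: you sketch a separation/Radon--Nikodym argument at the level of scalar measures, whereas the paper instead constructs, from a competing pair $(\gamma',\Gamma')$, a second representation $\rho'$ with a unit cyclic vector $\xi'$ realising the same state, and then applies Dixmier's results \cite[Prop.~2.4.1, Thm.~8.6.6]{Dixmier_C*} to identify $\rho'$ with $\rho$ and transfer that equivalence to the pairs. Your sketch is plausible but would need more work to recover the operator field $\Gamma(x,\lambda)$ $\gamma$-a.e. from the scalar integrals (the duality between trace-class and bounded operators has to be invoked fibrewise, measurably); the paper's detour through representation theory is heavier but gets uniqueness as an immediate consequence of the uniqueness of the central disintegration.
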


The proof of this proposition follows the lines of~\cite[Section 5]{FF} and  is given in Appendix Section~\ref{app_states}, for the convenience of the reader.

\subsection{Semi-classical measures}

We
 associate with a bounded family $(u^\eps)_{\eps>0}$ of $L^2(G)$  the quantities 
 \begin{equation}\label{def:leps}
\ell_\eps(\sigma)= \left({\rm Op}_\eps (\sigma) u^\eps,u^\eps\right)_{L^2(G)} ,\;\;\sigma\in \mathcal A_0,
\end{equation}
the limits of which are characterized by an element of ${\mathcal M}_1^+(G\times \widehat G)$.

\begin{theorem}\label{theo:mesures}
Let $(u^\eps)_{\eps>0}$ be a bounded family of $L^2(G)$.
There exist a sequence $(\eps_k)_{k\in \mathbb N}$ in $(0,+\infty)$ with  $\eps_k\Tend{k}{+\infty}0$
and a  pair $(\gamma,\Gamma)\in{\mathcal M}_1^+(G\times \widehat G)$
 such that we have
$$
\forall \sigma\in \mathcal A_0,\;\; 
\left({\rm Op}_{\eps_k} (\sigma) u^{\eps_k},u^{\eps_k}\right)_{L^2(G)}\Tend {k}{+\infty} \int_{G\times \widehat G} {\rm Tr}\left(\sigma(x,\lambda) \Gamma(x,\lambda)\right)d\gamma(x,\lambda).$$
Given the sequence $(\eps_k)_{k\in \mathbb N}$, 
the pair $(\gamma,\Gamma)\in{\mathcal M}_1^+(G\times \widehat G)$
is unique up to equivalence in ${\mathcal M}_1^+(G\times \widehat G)$
and satisfies 
$$\int_{G\times \widehat G} {\rm Tr}\left(\Gamma(x,\lambda)\right)d\gamma(x,\lambda) \leq \limsup_{\eps>0} \|u^\eps\|_{L^2(G)}^2.$$
\end{theorem}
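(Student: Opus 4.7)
The strategy is to view $\sigma \mapsto \ell_\eps(\sigma) = ({\rm Op}_\eps(\sigma)u^\eps, u^\eps)_{L^2(G)}$ as a family of uniformly bounded linear forms on $\mathcal A_0 \subset \mathcal A$, extract a subsequential limit that defines a positive continuous form on the $C^*$-algebra $\mathcal A$, and invoke Proposition~\ref{prop:states} to convert this state into the pair $(\gamma,\Gamma)$. Proposition~\ref{prop_L2bdd} directly gives the uniform bound $|\ell_\eps(\sigma)| \leq C M \|\sigma\|_{\mathcal A_0}$, where $M:=\sup_\eps \|u^\eps\|^2_{L^2(G)}$. Because $\mathcal A$ is separable by Proposition~\ref{prop:C*A} and $\mathcal A_0$ is dense in $\mathcal A$, I pick a countable family $\{\sigma_n\}_{n\ge 1} \subset \mathcal A_0$ dense in $\mathcal A$; each scalar sequence $(\ell_\eps(\sigma_n))_\eps$ being bounded, a diagonal extraction delivers a subsequence $\eps_k \to 0$ and complex numbers $\ell(\sigma_n)$ such that $\ell_{\eps_k}(\sigma_n) \to \ell(\sigma_n)$ for every $n$.

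Positivity of the limit is the key structural input. For any $\tau \in \mathcal A_0$, the composition formula~\eqref{eq:composition} of Proposition~\ref{prop:symbcal} gives
$$
{\rm Op}_\eps(\tau)^* {\rm Op}_\eps(\tau) = {\rm Op}_\eps(\tau^*\tau) + \eps\, R_\eps(\tau),
$$
with $\sup_\eps \|R_\eps(\tau)\|_{\mathcal L(L^2(G))} < \infty$. Testing against $u^{\eps_k}$ and letting $k\to\infty$ yields $\ell(\tau^*\tau) = \lim_k \|{\rm Op}_{\eps_k}(\tau)u^{\eps_k}\|_{L^2}^2 \geq 0$, so $\ell$ is a positive linear form on the dense $*$-subalgebra $\mathcal A_0 \subset \mathcal A$.

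The main obstacle is to extend $\ell$ to a continuous positive form on the full $C^*$-algebra $\mathcal A$ with norm at most $M$, since the estimate of Proposition~\ref{prop_L2bdd} uses the stronger norm $\|\cdot\|_{\mathcal A_0}$ rather than the $C^*$-norm $\|\cdot\|_{\mathcal A}$. This is a standard $C^*$-algebraic mechanism (carried out for graded Lie groups in \cite{FF}): a positive linear form on a dense $*$-subalgebra of a $C^*$-algebra admitting an approximate identity is automatically continuous in the $C^*$-norm, with norm equal to $\sup_\alpha \ell(e_\alpha)$ for any approximate identity $(e_\alpha)\subset \mathcal A_0$ granted by Proposition~\ref{prop:C*A}. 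To bound this supremum by $M$, one invokes a semiclassical Calder\'on--Vaillancourt type estimate $\limsup_\eps \|{\rm Op}_\eps(\sigma)\|_{\mathcal L(L^2(G))} \leq \|\sigma\|_{\mathcal A}$ valid for $\sigma\in \mathcal A_0$, obtained by iterating the composition formula $\|{\rm Op}_\eps(\sigma)\|^2 = \|{\rm Op}_\eps(\sigma^*\sigma)\| + O_\sigma(\eps)$ together with the Gelfand--Naimark relation $\|\sigma^*\sigma\|_{\mathcal A}=\|\sigma\|_{\mathcal A}^2$. The resulting extended form $\ell$ on $\mathcal A$ is positive with $\|\ell\| \leq M$, and in particular $\ell_{\eps_k}(\sigma) \to \ell(\sigma)$ for every $\sigma\in \mathcal A_0$. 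Applying Proposition~\ref{prop:states} to $\ell$ (after renormalisation by $\|\ell\|$ if $\ell\neq 0$) produces the pair $(\gamma,\Gamma) \in \mathcal M_1^+(G\times \widehat G)$, unique up to equivalence, satisfying the integral representation of the theorem with $\int {\rm Tr}(\Gamma)\, d\gamma = \|\ell\| \leq \limsup_\eps \|u^\eps\|_{L^2(G)}^2$.
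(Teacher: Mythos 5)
Your outline coincides with the paper's: the uniform $\mathcal{A}_0$-bound of Proposition~\ref{prop_L2bdd} together with separability gives a diagonal extraction; positivity of the limiting form $\ell$ comes from the composition formula of Proposition~\ref{prop:symbcal}; and Proposition~\ref{prop:states} then yields the pair $(\gamma,\Gamma)$. Where you diverge is the norm bound that lets one extend $\ell$ to a continuous positive form on $({\mathcal A},\|\cdot\|_{\mathcal A})$. The paper relies on the elementary, $\eps$-independent estimate~\eqref{simplebound}, valid for every product symbol $\tau(x,\lambda)=a(x)b(\lambda)$ with the sharp constant $\|\tau\|_{\mathcal A}$, combined with the density of such products in ${\mathcal A}$ and positivity. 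You instead invoke a semi-classical Calder\'on--Vaillancourt estimate $\limsup_\eps\|{\rm Op}_\eps(\sigma)\|_{{\mathcal L}(L^2(G))}\le\|\sigma\|_{\mathcal A}$ for all $\sigma\in{\mathcal A}_0$, to be obtained by iterating $\|{\rm Op}_\eps(\sigma)\|^2\le\|{\rm Op}_\eps(\sigma^*\sigma)\|+O_\sigma(\eps)$. If it held, that estimate would even make the positivity step superfluous for continuity; but the iteration as sketched has a genuine gap. After $N$ rounds the main term $\|{\rm Op}_\eps((\sigma^*\sigma)^{2^{N-1}})\|$ can, with the tools the paper provides, only be controlled through Proposition~\ref{prop_L2bdd}, hence by $\|(\sigma^*\sigma)^{2^{N-1}}\|_{{\mathcal A}_0}$; taking $2^N$-th roots and letting $N\to\infty$ produces the spectral radius of $\sigma^*\sigma$ in the $\|\cdot\|_{{\mathcal A}_0}$-completion, not $\|\sigma^*\sigma\|_{\mathcal A}=\|\sigma\|_{\mathcal A}^2$. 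Identifying those two quantities requires a non-trivial symmetry/Hermiticity property of the kernel algebra (a Hulanicki-type result) that is nowhere established in the paper, and it is precisely this point that the product-symbol device of~\eqref{simplebound} is designed to sidestep. So either supply that spectral-radius argument explicitly, or fall back on the paper's simpler route through product symbols and positivity.
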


 Any equivalence class $\Gamma d\gamma$  satisfying to Theorem~\ref{theo:mesures} for some subsequence $(\eps_k)_{k\in \mathbb N}$ is called a {\it semi-classical measure } of the family $(u^\eps)_{\eps>0}$.

   \begin{remark}
   \begin{enumerate}
\item   Note that this result can be generalized on any graded Lie group: there exist a sequence $(\eps_k)_{k\in \mathbb N}$ in $(0,+\infty)$ with  $\eps_k\Tend{k}{+\infty}0$
and a  pair $(\gamma,\Gamma)\in{\mathcal M}_1^+(G\times \widehat G)$
 such that we have 
 $$ \forall \sigma\in {\mathcal A}_0(G),\;\;\left({\rm Op}_{\eps_k} (\sigma) u^{\eps_k},u^{\eps_k}\right)_{L^2(G)}\Tend {k}{+\infty} \int_{G\times \widehat G} {\rm Tr}\left(\sigma(x,\pi) \Gamma(x,\pi)\right)d\gamma(x,\pi).$$
The pair $\Gamma d\gamma$ is called a semi-classical measure of the family $(u^\eps)$.
\item In the case of this article where $G$ is H-type, the special structure of $\widehat G$ implies that $\Gamma d\gamma$ consists of two pieces, one localized above $\lambda\in\mathfrak z^*\setminus\{0\}$ and another one which is scalar above $\mathfrak v^*$, see \eqref{eq_widehatG}.
\end{enumerate}
   \end{remark}

We now sketch the proof of Theorem~\ref{theo:mesures}.

\begin{proof}
By dividing $u^\eps$ by $\limsup_{\eps\rightarrow 0}  \| u^\eps\|_{L^2(G)}$ if necessary, we can assume that $$\limsup_{\eps\rightarrow 0}  \| u^\eps\|_{L^2(G)}=1.$$ 
Indeed, if $\limsup_{\eps\rightarrow 0}  \| u^\eps\|_{L^2G)}=0$, then the measure $\gamma=0$ answers our problem.
 We then consider the quantities 
$ \ell_\eps(\sigma)$ defined in~(\ref{def:leps}) and we observe the three following facts:
\begin{enumerate}
\item For any $\sigma\in {\mathcal A}_0$, the family $\ell_\eps(\sigma)$ is bounded and there exists a subsequence $(\eps_k(\sigma))_{ k\in \mathbb N}$ such that $\ell_{\eps_k(\sigma)}(\sigma)$ has a limit $\ell(\sigma)$
\item Using the separability of ${\mathcal A}_0$ and a diagonal extraction, one can find a sequence $(\eps_k)_{k\in \mathbb N}$ such that for all $\sigma\in {\mathcal A}_0$, the sequence $(\ell_{\eps_k}(\sigma))_{k\in \mathbb N}$ has a limit $\ell(\sigma)$ and the sequence $(\| u^{\eps_k}\|_{L^2(G)})_{k\in \mathbb N}$ converges to 1.
\item The map $\sigma \mapsto \ell(\sigma)$ constructed at point (2) is linear and satisfies $\ell(\sigma\sigma^*)\geq 0$ for all $\sigma\in  {\mathcal A}_0$.
\end{enumerate}
Observing that the set of symbols of the form $\tau(x,\lambda)=a(x)b(\lambda)$ is dense in ${\mathcal A}$, and that for these symbols, $|\ell(\tau)|\leq \|\tau \|_{\mathcal A}$ (see~(\ref{simplebound})), we can  extend the linear form~$\ell$ into a state of  ${\mathcal A}$, thus the existence of $\Gamma d\gamma$ by Proposition~\ref{prop:C*A}. 
\end{proof}

\subsection{Link with energy density and $\eps$-oscillation}

We want to link here the weak limits of the measure $|u^\eps(x)|^2 dx$ and the semi-classical measures of the family $u^\eps$. 
For this, we introduce the definition of an $\eps$-oscillating family of $L^2(G)$.

\begin{definition}
Let $(u^\eps)$ be a bounded family in $L^2(G)$.
 We shall say that $(u^\eps)$ is  \emph{$\eps$-oscillating} if 
$$
\limsup_{\eps\rightarrow 0} \left\|{\bf 1}_{-\eps^2\Delta_G>R} u^\eps
 \right\|_{L^2(G)}\Tend{R}{+\infty}0.$$
 \end{definition}

Let $\chi\in{\mathcal C}^\infty(\R)$ such that $0\leq \chi\leq 1$, $\chi=0$ on $]-\infty,1]$ and $\chi=1$ on $[2,+\infty[$. Equivalently, $u^\eps$ is $\eps$-oscillating if and only if 
$$\limsup_{\eps\rightarrow 0} \left\| \chi \left(-{\eps^2\over R} \Delta_G\right) u^\eps
 \right\|_{L^2(G)}\Tend{R}{+\infty}0.$$

\begin{proposition}\label{prop:sobcri}
If there exists $s>0$ and $C>0$ such that 
$$\forall \eps>0,\;\; \| (-\eps\Delta_G)^{s\over 2} \psi^\eps \|_{L^2(G)}
\leq C,$$
then $u^\eps$ is $\eps$-oscillating.
\end{proposition}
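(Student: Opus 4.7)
The plan is to reduce the statement to a direct application of the spectral theorem for the positive self-adjoint sublaplacian $-\Delta_G$ on $L^2(G)$. Interpreting the hypothesis as $\|(-\eps^2\Delta_G)^{s/2}u^\eps\|_{L^2(G)}\le C$ (matching the Sobolev scaling used in the definition of $\eps$-oscillation), everything will follow from the trivial pointwise inequality
$$\mathbf{1}_{[R,\infty)}(\mu)\ \le\ R^{-s}\,\mu^{s}\qquad (\mu\ge 0),$$
applied via functional calculus to the positive operator $-\eps^2\Delta_G$.

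The first step is to recall that $-\Delta_G$ is essentially self-adjoint and non-negative on $L^2(G)$ (this is standard for sublaplacians on stratified groups, and it has already been used implicitly through $H(\lambda)$ and \eqref{def:H}). Consequently $-\eps^2\Delta_G$ admits a spectral resolution, and the spectral projector $\mathbf{1}_{-\eps^2\Delta_G>R}$ used in the definition of $\eps$-oscillation is well-defined. The second step is to apply the functional calculus to the scalar inequality above: for every $R>0$, we obtain the operator inequality
$$\mathbf{1}_{-\eps^2\Delta_G>R}\ \le\ R^{-s}\,(-\eps^2\Delta_G)^{s},$$
as positive bounded operators on $L^2(G)$.

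The third and final step is to pair this with $u^\eps$ and take square roots. Since $\mathbf{1}_{-\eps^2\Delta_G>R}$ is a self-adjoint projector,
$$\bigl\|\mathbf{1}_{-\eps^2\Delta_G>R}\,u^\eps\bigr\|_{L^2(G)}^{2}
=\bigl\langle \mathbf{1}_{-\eps^2\Delta_G>R}u^\eps,u^\eps\bigr\rangle
\le R^{-s}\bigl\langle (-\eps^2\Delta_G)^{s}u^\eps,u^\eps\bigr\rangle
=R^{-s}\bigl\|(-\eps^2\Delta_G)^{s/2}u^\eps\bigr\|_{L^2(G)}^{2}\le C^{2}R^{-s}.$$
This bound is uniform in $\eps$ and tends to $0$ as $R\to\infty$, which is precisely the definition of $\eps$-oscillation.

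There is no real obstacle here: the only subtle point is the interpretation of the hypothesis (the natural reading is that the $\eps$-weighted homogeneous Sobolev norm of order $s$ is uniformly bounded), and the only tool needed beyond that is the spectral theorem applied to a single positive self-adjoint operator. The argument is identical to the Euclidean proof that a uniform bound in $H^{s}_\eps(\mathbb{R}^{n})$ implies $\eps$-oscillation.
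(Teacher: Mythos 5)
Your argument is correct and is essentially the paper's proof: both reduce the claim via functional calculus for the self-adjoint sublaplacian to the scalar bound that the high-frequency cutoff at level $R$ is dominated by $R^{-s}$ times $\mu^{s}$, the paper phrasing this through the Plancherel formula and the smooth cutoff $\chi$ while you use the sharp spectral projector directly, which is an equivalent formulation. You also correctly read through the two misprints in the statement ($-\eps\Delta_G$ for $-\eps^2\Delta_G$ and $\psi^\eps$ for $u^\eps$).
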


\begin{proof}
We use Plancherel formula and the facts that for $s>0$, 
$$  \chi \left(-{\eps^2\over R} \Delta_G\right) \leq {(-\eps^2\Delta_G)^{s\over 2}\over R^s}  \chi \left(-{\eps^2\over R} \Delta_G\right)\leq {(-\eps^2\Delta_G)^{s\over 2}\over R^s}.$$
\end{proof}

The interest of the notion of $\eps$-oscillation relies in the fact that it gives an indication of the size of the oscillations that have to be taken into account. It legitimates the use of semi-classical pseudodifferential operators and semi-classical measures. In particular, we have the following straightforward proposition.

\begin{proposition}\label{prop:eops}
Let $(u^\eps)$ be an $\eps$-oscillating family admitting a semi-classical measure $\Gamma d\gamma$ for the sequence $(\eps_k)_{k\in \mathbb N}$, then for all
  $\phi\in{\mathcal C}^\infty_0 (G)$
$${\lim_{k\rightarrow +\infty}  }\int_G \phi(x) |u^{\eps_k}(x)|^2 dx
=\int_{G\times \widehat G} \phi(x) {\rm Tr} \left(\Gamma(x,\lambda) \right) d\gamma(x,\lambda) .$$
\end{proposition}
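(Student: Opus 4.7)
The plan is to approximate multiplication by $\phi$ by a semi-classical pseudodifferential operator with symbol in $\mathcal A_0$, and then to use the $\eps$-oscillation hypothesis to handle the remainder.

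Let $\tilde\chi\in C^\infty_c(\R)$ be a cutoff with $\tilde\chi\equiv 1$ on $[-1,1]$. For $R>0$, define
$$
\sigma_R(x,\lambda):=\phi(x)\,\tilde\chi(H(\lambda)/R),
$$
with the convention that this equals the scalar $\phi(x)\tilde\chi(|\omega|^2/R)$ when $\lambda=(0,\omega)\in\{0\}\times\mathfrak v^*$. Since $\tilde\chi\in\mathcal S(\R)$, the convolution kernel $k_R$ of the spectral multiplier $\tilde\chi(-\Delta_G/R)$ belongs to $\mathcal S(G)$ (Hulanicki's classical theorem on the functional calculus of the sublaplacian on stratified groups). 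Therefore $\kappa_x:=\phi(x)k_R$ defines a smooth, $x$-compactly supported map from $G$ into $\mathcal S(G)$, so $\sigma_R\in\mathcal A_0$.

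Using $H(\eps^2\lambda)=\eps^2H(\lambda)$ in~\eqref{eq_quantization} and~\eqref{inversionformula}, one finds
$$
{\rm Op}_\eps(\sigma_R)u^\eps(x)=\phi(x)\,\tilde\chi\!\left(-\tfrac{\eps^2}{R}\Delta_G\right)u^\eps(x),
$$
and therefore
$$
\int_G\phi(x)|u^\eps(x)|^2dx-\bigl({\rm Op}_\eps(\sigma_R)u^\eps,u^\eps\bigr)_{L^2(G)}=\Bigl(\phi\bigl(1-\tilde\chi(-\tfrac{\eps^2}{R}\Delta_G)\bigr)u^\eps,u^\eps\Bigr)_{L^2(G)}.
$$
The modulus of this error is bounded by $\|\phi\|_\infty\|u^\eps\|_{L^2(G)}\bigl\|(1-\tilde\chi(-\tfrac{\eps^2}{R}\Delta_G))u^\eps\bigr\|_{L^2(G)}$; since $1-\tilde\chi$ vanishes on $[-1,1]$ and is bounded, the $\eps$-oscillation of $(u^\eps)$ forces the last factor to tend to $0$ as $R\to\infty$, uniformly in small $\eps$.

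For each fixed $R$, Theorem~\ref{theo:mesures} applied to $\sigma_R\in\mathcal A_0$ along the sequence $(\eps_k)$ yields
$$
\bigl({\rm Op}_{\eps_k}(\sigma_R)u^{\eps_k},u^{\eps_k}\bigr)_{L^2(G)}\Tend{k}{+\infty}\int_{G\times\widehat G}\phi(x){\rm Tr}\bigl(\tilde\chi(H(\lambda)/R)\Gamma(x,\lambda)\bigr)d\gamma(x,\lambda).
$$
As $R\to\infty$, the operators $\tilde\chi(H(\lambda)/R)$ converge strongly to the identity on $\mathcal H_\lambda$ for every $\lambda\in\widehat G$ (since $\tilde\chi(0)=1$ and $H(\lambda)$ has discrete spectrum), so ${\rm Tr}(\tilde\chi(H(\lambda)/R)\Gamma(x,\lambda))\to{\rm Tr}(\Gamma(x,\lambda))$ pointwise in $(x,\lambda)$. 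The integrand is dominated by $\|\phi\|_\infty\|\tilde\chi\|_\infty{\rm Tr}(\Gamma(x,\lambda))$, which is $\gamma$-integrable on $G\times\widehat G$ by Theorem~\ref{theo:mesures}, so dominated convergence concludes the proof. The main obstacles are the verification that $\sigma_R\in\mathcal A_0$ (i.e.\ the Schwartz regularity of the spectral multiplier kernel) and the careful diagonal combination of the two limits $\eps\to 0$ and $R\to\infty$.
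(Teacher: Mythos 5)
Your proposal is correct and follows essentially the same route as the paper: both decompose $\int\phi|u^{\eps_k}|^2$ into a low-frequency part handled by Theorem~\ref{theo:mesures} (your $\sigma_R=\phi(x)\tilde\chi(H(\lambda)/R)$ corresponds to the paper's $\phi(x)(1-\chi)(R^{-1}H(\lambda))$) and a high-frequency remainder killed by $\eps$-oscillation, then take $R\to\infty$ by dominated convergence. The only cosmetic difference is that the paper cites \cite[Corollary~3.8]{FF} for $\sigma_R\in\mathcal A_0$ whereas you invoke Hulanicki's theorem directly; also note that the phrase ``uniformly in small $\eps$'' should really be ``after taking $\limsup_{\eps\to 0}$,'' which is exactly what the $\eps$-oscillation hypothesis provides, but this does not affect the validity of the argument.
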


\begin{proof} Let $\chi$ be as above. 
We write for any $R>0$$$
\int_G \phi(x) |u^{\eps_k}(x)|^2 dx =
I_0^{k,R} +I_1^{k,R},
$$
where
\begin{align*}
I_0^{k,R}
&:=	\int_G \phi(x) \ \chi\left(R^{-1} \eps_k^2\Delta_G \right) u^{\eps_k}(x) \ \overline{u^{\eps_k}(x)} dx, \\
I_1^{k,R}
&:=	\int_G \phi(x) \ (1-\chi)\left(R^{-1} \eps_k^2\Delta_G \right) u^{\eps_k}(x) \ \overline{u^{\eps_k}(x)} dx. 
\end{align*}
As $(u^\eps)$ is $\eps$-oscillating, 
$\lim_{R\to +\infty} \lim_{k\to +\infty}  I_0^{k,R}=0$.
For the other integral, it is known  that  $\phi(x) \ (1-\chi)\left( R^{-1} H(\lambda) \right) \in \mathcal A_0$, 
see for instance \cite[Corollary 3.8]{FF}, 
so Theorem \ref{theo:mesures} implies
\begin{align*}
\lim_{R\to +\infty}\lim_{k\to +\infty}  I_1^{k,R}
&= \lim_{R\to +\infty} \int_{G\times \widehat G} \phi(x) {\rm Tr}\left( (1-\chi)\left(R^{-1} H(\lambda) \right)
	\Gamma(x,\lambda)\right)  d\gamma(x,\lambda)
\\
&=\int_{G\times \widehat G} \phi(x) {\rm Tr}\left(	\Gamma(x,\lambda)\right)  d\gamma(x,\lambda).
\end{align*}
Combining the limits shows the statement.
\end{proof}

\subsection{Other quantizations and Wigner transform}
\label{subsec_Wtransf}

It is of course possible to quantize symbols $\sigma(x,\lambda)$ in another manner as it is in the Euclidean case and as discussed in the article~\cite{MantoiuRuzhansky} in a non-semi-classical setting.  One can associate with any $\tau\in(0,1)$ the $\tau$-quantization of the symbol $a\in{\mathcal A}_0$ by setting 
$${\rm Op}_\eps^\tau (a)  f(x)= c_0 \int_{ \widehat G} {\rm Tr} \left( \pi^\lambda_{x} \sigma(\delta_\tau x\,\delta_{1-\tau} y,\eps^2\lambda) {\mathcal F}(f)(\lambda)\right)|\lambda|^d d\lambda,\;\;
\forall f\in {\mathcal S}(G),\;\;\forall x\in G.$$
As in the Euclidean case, these quantizations are equivalent at leading order in the sense that we have the following proposition.

\begin{proposition}\label{prop:otherquant}
Let $\sigma\in{\mathcal A}_0$ and $\tau\in(0,1)$. There exists a constant $C>0$ such that 
$$\|{\rm Op}_\eps^\tau(\sigma)-{\rm Op}_\eps(\sigma)\|_{{\mathcal L}(L^2(G)}\leq C\eps.$$
\end{proposition}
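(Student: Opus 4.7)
The plan is to express both operators as integral operators with kernels differing only in the ``base point'' at which the convolution kernel $\kappa$ is evaluated, and then use a first-order Taylor expansion in this base point combined with the semi-classical scaling to extract a factor of $\eps$.

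Unfolding $\mathcal F f(\lambda)=\int_G f(y)(\pi^\lambda_y)^*\,dy$ in both quantisations yields integral operators
$$
\mathrm{Op}_\eps(\sigma)f(x)=\int_G f(y)\,\kappa^\eps_x(y^{-1}x)\,dy,\qquad
\mathrm{Op}_\eps^\tau(\sigma)f(x)=\int_G f(y)\,\kappa^\eps_{m_\tau(x,y)}(y^{-1}x)\,dy,
$$
where $\kappa^\eps_{x'}(z)=\eps^{-Q}\kappa_{x'}(\delta_{\eps^{-1}}z)$ and $m_\tau(x,y)$ is the $\tau$-interpolation point produced by the $\tau$-quantization (so that $m_\tau(x,x)=x$ and, by smoothness of the group law, $\|x^{-1}m_\tau(x,y)\|\le C_\tau\,\|y^{-1}x\|$ for the standard homogeneous pseudo-norm). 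The difference operator therefore has kernel
$$
R^\eps(x,y)=\eps^{-Q}\bigl(\kappa_{m_\tau(x,y)}-\kappa_x\bigr)\bigl(\delta_{\eps^{-1}}(y^{-1}x)\bigr).
$$

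Since $x'\mapsto\kappa_{x'}$ is smooth and compactly supported with values in $\mathcal S(G)$, a first-order Taylor expansion around $x'=x$ in the left-invariant group coordinates gives, with $x^{-1}m_\tau(x,y)=\mathrm{Exp}(w_\tau(x,y))$,
$$
\kappa_{m_\tau(x,y)}(u)-\kappa_x(u)=\sum_{j=1}^{2d+p}w_{\tau,j}(x,y)\,\tilde\kappa^{(j)}_x(u)+r_\tau(x,y,u),
$$
where $(x,u)\mapsto\tilde\kappa^{(j)}_x(u)$ lie in $C^\infty_c(G;\mathcal S(G))$ and $|r_\tau(x,y,u)|\le C\|w_\tau(x,y)\|^2\,\theta(u)$ for some fixed $\theta\in\mathcal S(G)$, uniformly in $x$. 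The crucial point is that on the (rescaled) support of the kernel the argument $z=y^{-1}x$ satisfies $\|z\|\le C\eps$ by homogeneity of $\|\cdot\|$, hence $\|w_\tau(x,y)\|\le C'\eps$ as well.

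Substituting these estimates into $R^\eps$ and factoring out the single power of $\eps$ from the $w_{\tau,j}$ (and an $\eps^2$ from $r_\tau$) one identifies
$$
R^\eps(x,y)=\eps\,\sum_j \eps^{-Q}\,\tilde\kappa_x^{(j,\eps)}\bigl(\delta_{\eps^{-1}}(y^{-1}x)\bigr),
$$
where the base functions $\tilde\kappa_x^{(j,\eps)}$ stay in a bounded set of $C^\infty_c(G;\mathcal S(G))$ uniformly for $\eps\in(0,1)$. Each term is the Schwartz kernel of $\mathrm{Op}_\eps(\sigma^{(j,\eps)})$ for a symbol $\sigma^{(j,\eps)}\in\mathcal A_0$ whose $\|\cdot\|_{\mathcal A_0}$-norm is bounded independently of $\eps$, so Proposition~\ref{prop_L2bdd} delivers the desired bound $\|\mathrm{Op}_\eps^\tau(\sigma)-\mathrm{Op}_\eps(\sigma)\|_{\mathcal L(L^2(G))}\le C\eps$. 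The main obstacle is the clean non-commutative bookkeeping: using the Baker--Campbell--Hausdorff formula in the step-two stratified setting, one needs to verify that writing $w_{\tau,j}(x,y)=\eps\,\tilde w_j(x,\delta_{\eps^{-1}}(y^{-1}x))$ produces, after absorption into the Taylor coefficients, a bounded family in $C^\infty_c(G;\mathcal S(G))$, and that the quadratic remainder $r_\tau$ yields an $O(\eps^2)$ contribution of the same structure. These manipulations are routine and of the same nature as those appearing in the proof of Proposition~\ref{prop:symbcal}, though simpler due to the absence of derivatives on the $\sigma$-side.
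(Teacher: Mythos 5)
Your strategy --- read $\mathrm{Op}_\eps^\tau(\sigma)$ as a convolution-type operator whose base point $m_\tau(x,y)$ is an $\eps$-small perturbation of $x$, Taylor-expand $x'\mapsto\kappa_{x'}$, and bound everything in the $\|\cdot\|_{\mathcal A_0}$-norm via Proposition~\ref{prop_L2bdd} --- is exactly the route the paper sketches. The gap is the property you take for granted: that $m_\tau(x,x)=x$ and that $x^{-1}m_\tau(x,y)$ scales like $y^{-1}x$. For the displayed formula of $\mathrm{Op}_\eps^\tau$, where the base point is $\delta_\tau x\,\delta_{1-\tau}y$, this is \emph{false} on a nonabelian step-two group: writing $x={\rm Exp}(V_x+Z_x)$, the Campbell--Baker--Hausdorff formula gives
$$
\delta_\tau x\,\delta_{1-\tau}x = {\rm Exp}\bigl(V_x+(\tau^2+(1-\tau)^2)Z_x\bigr),
\qquad
x^{-1}\bigl(\delta_\tau x\,\delta_{1-\tau}x\bigr) = {\rm Exp}\bigl(-2\tau(1-\tau)Z_x\bigr)\ne {\rm Exp}(0),
$$
so the zeroth-order Taylor term would be $\sigma(\delta_\tau x\,\delta_{1-\tau}x,\lambda)\ne\sigma(x,\lambda)$ and $\mathrm{Op}_\eps^\tau-\mathrm{Op}_\eps$ would be $O(1)$, not $O(\eps)$. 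What the paper actually manipulates, via the identity $\kappa^{\eps,\tau}_x(z)=\kappa_{x\,\delta_{\eps(1-\tau)}z^{-1}}(z)$, is the base point $m_\tau(x,y)=x\,\delta_{1-\tau}(x^{-1}y)$, for which $m_\tau(x,x)=x$ and, after the rescaling $y^{-1}x=\delta_\eps u$, one gets the clean expression $x^{-1}m_\tau(x,y)=\delta_{(1-\tau)\eps}\,u^{-1}$. Your proof should make this explicit --- the entire argument hinges on that formula, and without deriving it (or at least noting that the displayed $\delta_\tau x\,\delta_{1-\tau}y$ has to be replaced by $x\,\delta_{1-\tau}(x^{-1}y)$) the properties you assert for $m_\tau$ are not established.

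A smaller imprecision: there is no ``support'' on which $\|y^{-1}x\|\le C\eps$, because the convolution kernel $z\mapsto\kappa_x(z)$ is Schwartz, not compactly supported, so the pointwise bound $\|w_\tau(x,y)\|\le C'\eps$ is not available globally. You do not actually need it: once $y^{-1}x=\delta_\eps u$, the coordinates of $w_\tau$ are of the form $(1-\tau)\eps\, u_j$ on the first stratum and $(1-\tau)^2\eps^2\, u_m$ on the centre, i.e.\ $\eps$ times polynomials in $u$, and the polynomial growth in $u$ is absorbed by the Schwartz decay of $\kappa_x$ and its $x$-derivatives inside the $L^1(du)$-integral defining $\|\cdot\|_{\mathcal A_0}$, precisely as in the proof of Proposition~\ref{prop:symbcal}. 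Alternatively, use Proposition~\ref{prop:diag} to truncate $\kappa_x$ near $z=0$ up to an $O(\eps^N)$ error and then argue on a genuine compact support. With the base point written correctly and either of these repairs, your proof matches the paper's.
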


The proof crucially uses the special of the kernel of the operator  ${\rm Op}_\eps^\tau(\sigma)$ which is the function 
$$G\times G \ni (x,y)\mapsto \eps^{-Q} \kappa_{\delta_\tau x \delta_{1-\tau} y} (\delta_{\eps^{-1}} (y^{-1} x))$$
with ${\mathcal F}(\lambda)(\kappa_x)=\sigma(x,\lambda)$. 
Set 
$$\kappa^{\eps,\tau}_x(z)= \kappa_{x\delta_{\eps(1-\tau)} z^{-1} }(z),\;\; z\in G,$$
then, 
the convolution kernel of ${\rm Op}_\eps^\tau(\sigma)$ is  the function
 $$G\times G \ni (x,z)\mapsto \eps^{-Q} \kappa^{\eps,\tau}_x(\eps^{-1} z).$$
The proof of Proposition \ref{prop:otherquant}
follows form the analysis of the difference between $\kappa^{\eps,\tau}_x(z)$ and $\kappa^{\tau}_x(z)$ thanks to a Taylor formula as in Proposition~\ref{prop:symbcal}.

\medskip

The quantization we have discussed until now corresponds to the choice of $\tau=1$. The analogue of what is the ``left'' quantization in the Euclidean case corresponds to $\tau=0$ and, as in the Euclidean case, one has for all $f\in {\mathcal S}(G)$,
$$\left({\rm Op}_\eps^1(\sigma(x,\lambda) )f,f\right) = \left(f,{\rm Op}_\eps^0(\sigma(x,\lambda)^*) f\right).$$
Similarly, 
the Weyl quantization should correspond to $\tau =1/2$. However, due to the non-commutativity of the law group,  the choice of $\tau=1/2$ does not induce a self-adjoint quantization, in the sense that we would have ${\rm Op}_\eps^\tau (\sigma)^*= {\rm Op}_\eps^\tau (\sigma^*)$. This constitutes a major  difference with the Euclidean case.

\medskip 

This implies that the definition of a ``natural'' Wigner distribution is not straightforward. Indeed, in the Euclidean case, the Wigner distribution of a function $f\in L^2(\R^d)$ is the self-adjoint distribution such that 
$$({\rm Op}^{1/2} (a) f,f)_{L^2(\R^d)} = \langle a,W^\eps _f\rangle.$$
Introduced in the 30s (see~\cite{wigner}), it plays the role of a generalized energy density defined in the phase space as its  marginal  gives the density in position or in impulsion:
$$|f(x)|^2 =\int_{\xi\in \R^d} W^\eps_f(x,\xi) d\xi,\;\;\eps^{-d/2}|\widehat f(\xi/\eps)|^2 =\int_{x\in \R^d} W^\eps_f(x,\xi) dx.$$ Even though it is not a positive distribution, it is self-adjoint, and it allows a unified treatment of space and Fourier variables which proves to be efficient in the analysis of a large range of problems.

\medskip

Let us now discuss what could be the generalization of Wigner distribution in  the case of H-type groups. 
We endow the space of  fields of operators defined on  $G\times \hat G $ with the inner product $\langle\cdot,\cdot\rangle$ defined by
$$ \langle A,B\rangle=c_0 \int_{G\times \widehat G} {\rm Tr}
\left(A(x,\lambda)B(x,\lambda)^*\right) |\lambda|^d d\lambda\, dx.$$
We then associate with a function $f\in{\mathcal S}(G)$ its {\it Wigner distribution} $W^\eps_f(x,\lambda)$ defined  by  
$$W^\eps_f(x,\lambda) ={c_0\over 2} \int_G \pi^{\lambda}_{w} \left(f(x \ \delta_\eps w^{-1} ) \overline f(x) +f(x ) \overline f(x\ \delta_\eps w) \right)dw,$$

\begin{enumerate}
\item The Wigner distribution of a function $f\in L^2(G)$ is a $\eps$-dependent  self-adjoint field of operators $W^\eps_f(x,\lambda)$.
\item  The Wigner distribution  $W^\eps_f(x,\lambda)$ is $L^1$ in the variable~$x$ and for $\gamma$-almost all $(x,\lambda)$, $W_f^\eps(x,\lambda)$ is a trace-class operator on ${\mathcal H}_\lambda$, besides 
$$\int _{G\times G} {\rm Tr} \left(W^\eps_f(x,\lambda) \right)|\lambda|^d d\lambda dx=\| f\|_{L^2(G)}^2.$$
\item For any $\sigma(x,\lambda)$, we have 
$$\langle W^\eps _f,\sigma\rangle = {1\over 2}\left( ({\rm Op}_\eps(\sigma )f,f)+(f,{\rm Op}_\eps(\sigma^*) f)\right) = {1\over 2}\left( ({\rm Op}_\eps^0(\sigma )f,f)+\left({\rm Op}_\eps^1(\sigma )f,f\right) \right).$$
\item The marginals of the Wigner distribution give the energy density in position and what we shall call the density in impulsion of $f$:
$$\displaylines{
\int_{\widehat G}  {\rm Tr} \, 
W^\eps_f(x,\lambda)|\lambda|^d d\lambda = |f(x)|^2 ,\cr
\int_G W^\eps_f (x,\lambda) dx = \eps^{-Q} \widehat f(\eps^{-2}\lambda) \widehat f(\eps^{-2}\lambda)^*.\cr}$$
\item If $u^\eps$ is a bounded family in $L^2(G)$, the semi-classical measures of $(u^\eps)$ are the weak limits of~$W^\eps_{u^\eps}$. 
\end{enumerate}


\section{Examples}\label{sec:examples}

In the Euclidean case, the basic examples are those families which present concentration on a point $x_0\in\R^n$ as the family $u^\eps$ described in~(\ref{suitesdel'intro}) or which oscillate along a vector $\xi_0\in\R^n$ as the sequence~$v^\eps$:
\begin{equation}\label{suitesdel'intro}
u_\eps(x)= \eps ^{-d/2} \Phi\left((x-x_0)/\eps\right)\quad\text{et}\quad v_\eps(x) = \Phi(x) {\rm e}^{i{x\cdot \xi_0/ \eps}}
\end{equation}
where $x_0,\xi_0\in\R^n$ and $\Phi\in{C}^\infty_0(\R^{n})$. These two families tend weakly to $0$ but not strongly because the point $x_0$ is an obstruction to the strong convergence for $u^\eps$ and similarly for $v^\eps$ for which the point~$\xi_0$ appears as a concentration point of the $\eps$-Fourier transform of $v^\eps$:
$$
(2\pi\eps)^{-d/2} \widehat v^\eps(\xi/\eps)=(2\pi\eps)^{-d/2} \widehat \Phi\left({\xi-\xi_0\over\eps}\right).$$
 The use of semi-classical measures allows one to unify the treatment of these two different behaviours, the  semi-classical measures of $(u^\eps)_{\eps>0}$ presenting a Dirac mass at $x_0$ and the one of $(v^\eps)_{\eps>0}$ a Dirac mass at $\xi_0$:
 $$\mu_{(u^\eps)} (x,\xi)= \delta(x-x_0)\otimes (2\pi)^{-d}|\widehat  \Phi(\xi)|^2  d\xi\;\;{\rm and}\;\; \mu_{(v^\eps)}(x,\xi)=|\Phi(x)|^2dx\otimes \delta(\xi-\xi_0).$$
 
 \medskip 
 
 We present in the next sections similar examples in the context of Lie groups with  a particular attention to sequences which concentrate on the  finite dimensional representations of $G$.

\subsection{Concentration}
In this section, we describe examples of bounded families of $L^2(\R^d)$ with semi-classical measures that concentrate on any point of $G$. 
Let $a\in{\mathcal S}(G)$ and $x_0\in G$, we set 
$$u^\eps(x) = \eps^{- {Q\over 2}} a(\delta_{\eps^{-1}} (xx_0^{-1})),\;\;x\in G.$$
Then $u^\eps$
is a strictly $\eps$-oscillating family since it satisfies the Sobolev criterium of Proposition~\ref{prop:sobcri}. 

\begin{proposition}
Any 
 semi-classical measure of the family $u^\eps$ is equivalent to the pair $(\Gamma,\gamma)$ with 
$$\Gamma(\lambda)=  \widehat a(\lambda)\widehat a(\lambda)^*,\;\; \gamma(x,\lambda)=c_0 \ \delta_{x_0}(x)\otimes \left(|\lambda|^dd\lambda\right).$$
\end{proposition}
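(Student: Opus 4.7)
The plan is to test against arbitrary $\sigma\in\mathcal A_0$ and to verify that
$$
\ell_\eps(\sigma):=\bigl({\rm Op}_\eps(\sigma) u^\eps,u^\eps\bigr)_{L^2(G)}
\ \Tend{\eps}{0}\ c_0\int_{\widehat G} {\rm Tr}\bigl(\sigma(x_0,\lambda)\,\widehat a(\lambda)\widehat a(\lambda)^*\bigr)\,|\lambda|^d d\lambda,
$$
which coincides with $\int_{G\times\widehat G}{\rm Tr}(\sigma(x,\lambda)\,\Gamma(\lambda))\,d\gamma(x,\lambda)$ for the pair in the statement. Combined with the uniqueness up to equivalence of semi-classical limits in Theorem~\ref{theo:mesures}, this identifies every semi-classical measure of $(u^\eps)$ with the equivalence class of the stated $(\Gamma,\gamma)$. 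The $\eps$-oscillation of the family needed by Proposition~\ref{prop:eops} follows from the Sobolev criterion (Proposition~\ref{prop:sobcri}): the substitution $\mu=\eps^2\lambda$ combined with Plancherel gives $\|(-\eps^2\Delta_G)^{s/2}u^\eps\|_{L^2(G)}=\|(-\Delta_G)^{s/2}a\|_{L^2(G)}$ independently of $\eps$, for every $s\ge 0$.

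First I would compute the Fourier transform of $u^\eps$. The substitution $x=\delta_\eps w\cdot x_0$ in the Fourier integral, together with the dilation identity $\pi^\lambda_{\delta_\eps y}=\pi^{\eps^2\lambda}_y$ and the group law $\pi^\lambda_{yz}=\pi^\lambda_y\pi^\lambda_z$, yield
$$
\widehat{u^\eps}(\lambda)=\eps^{Q/2}\,(\pi^\lambda_{x_0})^*\,\widehat a(\eps^2\lambda),
$$
and in particular $\|u^\eps\|_{L^2(G)}=\|a\|_{L^2(G)}$ for every $\eps>0$.

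Next, I would insert this into the quantization formula \eqref{eq_quantization} together with the Plancherel identity. After the substitution $\mu=\eps^2\lambda$ in the frequency integral (which cancels the factor $\eps^Q$ coming from $|\lambda|^d d\lambda=\eps^{-Q}|\mu|^d d\mu$) and $x=\delta_\eps v\cdot x_0$ in the position integral (where $\overline{u^\eps(x)}\,dx=\eps^{Q/2}\overline{a(v)}\,dv$), the identity $\pi^{\mu/\eps^2}_{\delta_\eps y}=\pi^\mu_y$ decomposes $\pi^\lambda_x$ as $\pi^\mu_v\,\pi^\mu_{\delta_{\eps^{-1}}x_0}$, and the $v$-integral against $\overline{a(v)}$ is absorbed inside the trace via $\int_G \overline{a(v)}\,\pi^\mu_v\,dv=\widehat a(\mu)^*$. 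For a separable symbol $\sigma(x,\lambda)=\phi(x)b(\lambda)$ the computation simplifies: using ${\rm Op}_\eps(\sigma)f=\phi\cdot(f*\kappa_b^\eps)$ and applying Plancherel directly to $(u^\eps*\kappa_b^\eps,\phi u^\eps)$, cyclicity of the trace lets one move the unitary conjugation by $\pi^\mu_{\delta_{\eps^{-1}}x_0}$ next to $b(\mu)$, leaving an expression of the form $\phi(x_0)\cdot c_0\int_{\widehat G}{\rm Tr}(b(\mu)\,\widehat a(\mu)\widehat a(\mu)^*)|\mu|^d d\mu$ up to a remainder controlled by the continuity of $\phi$ at $x_0$.

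Third, I would pass to the limit. The smoothness of $x\mapsto \kappa_x$ gives $\sigma(\delta_\eps v\cdot x_0,\mu)\to\sigma(x_0,\mu)$ pointwise in $(v,\mu)$; the uniform bound $\|\sigma\|_{\mathcal A}$ on the symbol together with the Plancherel identity $c_0\int_{\widehat G}\|\widehat a(\mu)\|^2_{HS(L^2(\mathfrak p_\mu))}|\mu|^d d\mu=\|a\|^2_{L^2(G)}<\infty$ supplies an integrable dominant, and dominated convergence yields the claimed limit for separable symbols. Extension to general $\sigma\in\mathcal A_0$ then follows from density of separable symbols in $(\mathcal A_0,\|\cdot\|_{\mathcal A_0})$ combined with the uniform-in-$\eps$ estimate $|\ell_\eps(\sigma)|\le\|\sigma\|_{\mathcal A_0}\|u^\eps\|^2_{L^2(G)}$ furnished by Proposition~\ref{prop_L2bdd}.

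The main obstacle I would expect is Step~2: the right-shift by $x_0$ in $u^\eps$ produces unitary conjugations by $\pi^\mu_{\delta_{\eps^{-1}}x_0}$ sitting next to $\sigma(\delta_\eps v\cdot x_0,\mu)$ inside the trace, and disentangling them without destroying the structure $\widehat a(\mu)\widehat a(\mu)^*$ requires careful use of cyclicity. The cleanest way to bypass this is the density reduction to separable symbols, where the conjugations collapse trivially by cyclicity and the computation becomes a pure convolution plus Plancherel identification.
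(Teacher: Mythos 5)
Your core argument is the paper's: rescale $\lambda$ and the position variable so that $\pi^\lambda_x$ becomes $\eps$-free, use $\int_G \overline{a(v)}\,\pi^\lambda_v\,dv=\widehat a(\lambda)^*$ and $\int_G a(w)\,(\pi^\lambda_w)^*\,dw=\widehat a(\lambda)$, and pass to the limit $\sigma(\delta_\eps v\cdot x_0,\lambda)\to\sigma(x_0,\lambda)$ by dominated convergence. The paper does this directly for a general $\sigma\in\mathcal A_0$ on the triple integral over $G\times G\times\widehat G$, so the detour through separable symbols and $\|\cdot\|_{\mathcal A_0}$-density adds nothing.

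There is, however, a genuine gap in your handling of $x_0\neq 0$. Your formula $\widehat{u^\eps}(\lambda)=\eps^{Q/2}(\pi^\lambda_{x_0})^*\widehat a(\eps^2\lambda)$ is correct for $u^\eps(x)=\eps^{-Q/2}a(\delta_{\eps^{-1}}(xx_0^{-1}))$, but the factor $(\pi^\lambda_{x_0})^*$ then sits to the \emph{left} of $\widehat a$. After the change of variables, the trace contains the conjugation $\pi^{\mu/\eps^2}_{x_0}\,\sigma(\,\cdot\,,\mu)\,(\pi^{\mu/\eps^2}_{x_0})^*$ acting against $\widehat a(\mu)\widehat a(\mu)^*$; cyclicity only transfers the conjugation from one factor to the other, and does not make it vanish. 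Since $\pi^{\mu/\eps^2}_{x_0}$ has no limit as $\eps\to 0$ once the $\mathfrak v$-component $V_0$ of $\log x_0$ is nonzero, your claim that for separable symbols the conjugations ``collapse trivially by cyclicity'' is false. For the same reason your asserted identity $\|(-\eps^2\Delta_G)^{s/2}u^\eps\|_{L^2}=\|(-\Delta_G)^{s/2}a\|_{L^2}$ fails when $V_0\neq 0$: the left-invariant fields $V_j$ do not commute with the right shift $x\mapsto xx_0^{-1}$, and the conjugation picks up an extra central term $[V_0,V_j]$ carrying an unbounded $\eps^{-1}$ weight. The paper avoids all of this by writing out the proof only for $x_0=0$, where no shift is present; with the \emph{left}-shifted profile $a(\delta_{\eps^{-1}}(x_0^{-1}x))$---the natural choice in the paper's left-invariant framework---the factor $(\pi^\lambda_{x_0})^*$ lands on the \emph{right} of $\widehat a(\eps^2\lambda)$ and cancels against the adjoint $\pi^\lambda_{x_0}$ coming from the second copy of $\widehat{u^\eps}$, and the scheme you outline does go through.
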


\begin{proof}
We write the proof for $x_0=0$. 
We have
$$\left({\rm Op}_\eps(\sigma) u^\eps,u^\eps\right)_{L^2(G)} = c_0
\eps^{-Q}\int _{G\times G\times \widehat G} {\rm Tr} \left(\pi^\lambda_{ x}\sigma(x,\eps^2\lambda) (\pi^\lambda_y)^*\right) a(\delta_{\eps^{-1}} y)\overline{a(\delta_{\eps^{-1}} x)}|\lambda|^d dxdyd\lambda.$$
The change of variable $\delta_{\eps^{-1}} x\rightarrow x$, $\delta_{\eps^{-1}}  y\rightarrow y$ and $\eps^{2}\lambda \rightarrow \lambda$ and the fact that 
 $\pi^{\eps^{-2}\lambda}_{\delta_\eps x} = \pi^\lambda_x$ gives 
 $$\left({\rm Op}_\eps(\sigma) u^\eps,u^\eps\right)_{L^2(G)} = 
c_0\int _{G\times G\times \widehat G} {\rm Tr} \left(\pi^\lambda_{ x}\sigma(\delta_\eps x,\lambda) (\pi^\lambda_y)^*\right) a(y)\overline{a( x)}|\lambda|^d dxdyd\lambda,$$
whence the result in view of 
$$\int _{G\times G\times \widehat G} {\rm Tr} \left(\pi^\lambda_{ x}\sigma(0,\lambda) (\pi^\lambda_y)^*\right) a(y)\overline{a( x)}|\lambda|^d dxdyd\lambda
= \int _{ \widehat G} {\rm Tr} \left(\sigma(0,\lambda)  {\mathcal F}(a)(\lambda){\mathcal F}(a)(\lambda)^* \right) |\lambda|^d d\lambda.$$
\end{proof}

\subsection{Oscillations}
\label{subsec_osc}
In this section, we build examples of bounded families of $L^2(\R^d)$ with semi-classical measures that concentrate on some points of $\widehat G$, i.e. on the representations $\pi^\lambda_x$ for $\lambda\in {\mathfrak z}^*\setminus\{0\}$ and also for the finite dimensional representations $\pi_{0,\omega}$, $\omega\in{\mathfrak v}^*$. 

\medskip 
 
 For $x\in G$, we write $x={\rm exp}(V+Z)= x_{\mathfrak z} x_{\mathfrak v} = x_{\mathfrak v} x_{\mathfrak z} $ with $V\in{\mathfrak v}$, $Z\in{\mathfrak z}$,  $x_{\mathfrak z} ={\rm e}^Z\in G_{\mathfrak z}$ and $x_{\mathfrak v}={\rm e}^V\in G_{\mathfrak v}=G/ G_{\mathfrak z} $.
For $\lambda_\eps\in {\mathfrak z}^*$, we define the family
$$v_\eps (x)=|\lambda_\eps|^{d/2}e_\eps( x)a(x_{\mathfrak z}),\;\; e_\eps(x)=\left(\pi^{\lambda_\eps}_x \Phi_\eps, \Phi_\eps\right).$$
where 
$a\in{\mathcal S}(G_{\mathfrak z})$ is such that $\| a\|_{L^2(G_{\mathfrak z})}=1$ and 
 $\Phi_\eps \in {\mathcal S}(\R^d)$ with $\| \Phi_\eps\|_{L^2(\R^d)}=1$ (where we identify~${\mathfrak p}_\lambda$ with $\R^p$).

\begin{proposition}\label{prop:examples}
\begin{enumerate}

\item  Assume $\Phi_\eps=\Phi$ is independent of $\eps$ and 
$$\eps^2\lambda_\eps\Tend{\eps}{0}\lambda_0\not=0.$$ 
Then 
any semi-classical measure of $(v^\eps)_{\eps>0}$  is equivalent to the pair $(\Gamma,\gamma)$ with 
$$\Gamma=\mathcal F \Phi\otimes(\mathcal F \Phi)^*\;\;{\rm and}\;\;  d\gamma(x,\lambda)=\left(|a(x_{\mathfrak z})|^2 dx_{\frak z} \,\otimes\delta_{x_{\mathfrak v}=0}\right)\otimes \delta_{\lambda=\lambda_0}.$$

\item  
Assume  $\Phi_\eps=h_{\alpha_\eps}$ a Hermite function with 
$\alpha_\eps \sim \eps^{-\alpha}$ with $0<\alpha<{1\over d+1}$. Assume  
$$ \eps^2|\lambda_\eps|  (2|\alpha_\eps|+d)  \Tend{\eps}{0}  \mu^2 \in \R^*_+.$$
 Then  any  semi-classical measure of $(v^\eps)_{\eps>0}$ is equivalent to the pair $(\Gamma,\gamma)$ 
 supported
  in $G_{\mathfrak z} \times \{\lambda=0\}$, so we may choose $\Gamma =1$.
 Furthermore, the radialisation $\tilde \gamma (x,\omega)=\int_{SO(\mathfrak v)} \gamma (x,k\omega)dk$ of $\gamma$ satisfies 
  	$$\tilde \gamma (x,\omega)=\left(|a(x_{\mathfrak z})|^2 dx_{\frak z} \,\otimes\delta_{x_{\mathfrak v}=0}\right)\otimes  \left( \delta_{\omega=\mu\varsigma}{\bf 1}_{\varsigma\in {\bf S}^{2d-1}}d\varsigma\right)$$
where $d\varsigma$ denotes the probability measure on the sphere ${\bf S}^{2d-1}$. 


\end{enumerate}
\end{proposition}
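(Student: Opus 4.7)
Existence of a semi-classical measure along a subsequence is ensured by Theorem~\ref{theo:mesures}; the task is to identify its equivalence class by computing the limit of
$$\ell_\eps(\sigma) := ({\rm Op}_\eps(\sigma)v^\eps, v^\eps)_{L^2(G)} = \iint_{G\times G} \kappa_x(w)\,v^\eps(x\delta_\eps w^{-1})\,\overline{v^\eps(x)}\,dw\,dx$$
for $\sigma\in\mathcal A_0$, where $\mathcal F\kappa_x=\sigma(x,\cdot)$. Writing $x={\rm Exp}(V+Z)$, $w={\rm Exp}(V'+Z')$, the step-2 Campbell--Baker--Hausdorff formula gives $x\delta_\eps w^{-1}={\rm Exp}((V-\eps V')+(Z-\eps^2 Z'-\tfrac{\eps}{2}[V,V']))$. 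Setting $F_\eps(V):=(\pi^{\lambda_\eps}_{{\rm Exp}(V)}\Phi_\eps,\Phi_\eps)$, the integrand splits as $|\lambda_\eps|^d$ times the phase $e^{-i\eps^2\lambda_\eps(Z')-i\tfrac{\eps}{2}\lambda_\eps([V,V'])}$, the product $F_\eps(V-\eps V')\overline{F_\eps(V)}$, and an $a\bar a$ amplitude factor.

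The crucial rescaling is $V=\eps\tilde V$: it replaces $|\lambda_\eps|^d\,dV$ by $|\mu_\eps|^d\,d\tilde V$ (with $\mu_\eps:=\eps^2\lambda_\eps$), and the dilation identity $\pi^{\lambda_\eps}_{{\rm Exp}(\eps X)}=\pi^{\mu_\eps}_{{\rm Exp}(X)}$ for $X\in\mathfrak v$ turns $F_\eps(\eps X)$ into $(\pi^{\mu_\eps}_{{\rm Exp}(X)}\Phi_\eps,\Phi_\eps)$. The BCH identity $\pi^{\mu_\eps}_{{\rm Exp}(\tilde V-V')}=e^{i\mu_\eps([\tilde V,V'])/2}\pi^{\mu_\eps}_{{\rm Exp}(\tilde V)}\pi^{\mu_\eps}_{{\rm Exp}(-V')}$ produces a phase that cancels exactly the rescaled one $e^{-i\mu_\eps([\tilde V,V'])/2}$, and the Moyal/Schur orthogonality
$$\int_{\mathfrak v}(\pi^{\mu_\eps}_{{\rm Exp}(\tilde V)}\Phi_1,\Psi_1)\,\overline{(\pi^{\mu_\eps}_{{\rm Exp}(\tilde V)}\Phi_2,\Psi_2)}\,d\tilde V = (2\pi)^d|\mu_\eps|^{-d}(\Phi_1,\Phi_2)\,\overline{(\Psi_1,\Psi_2)}$$
collapses the $\tilde V$-integration exactly into the scalar $(2\pi)^d(\pi^{\mu_\eps}_{{\rm Exp}(-V')}\Phi_\eps,\Phi_\eps)$. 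Recombining with $e^{-i\mu_\eps(Z')}$ via centrality of $Z'$ gives $(2\pi)^d((\pi^{\mu_\eps}_w)^*\Phi_\eps,\Phi_\eps)$, so the $w$-integral against $\kappa_{x_{\mathfrak z}}$ reconstructs $(2\pi)^d(\sigma(x_{\mathfrak z},\mu_\eps)\Phi_\eps,\Phi_\eps)$, while the $Z$-integration contributes $\int|a|^2\,dx_{\mathfrak z}$. The replacement of $a({\rm Exp}(Z_y))$ by $a({\rm Exp}(Z))$ in the limit is controlled by the Schwartz decay of $\kappa_x(w)$ and a mean-value estimate.

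For Part~(1), $\mu_\eps\to\lambda_0\neq 0$ and $\Phi_\eps=\Phi$ is fixed, so by continuity of $\sigma$ in $\lambda$ one has $(\sigma(x_{\mathfrak z},\mu_\eps)\Phi,\Phi)\to(\sigma(x_{\mathfrak z},\lambda_0)\Phi,\Phi)={\rm Tr}(\sigma(x_{\mathfrak z},\lambda_0)\,\Phi\otimes\Phi^*)$, and the limit of $\ell_\eps(\sigma)$ is $(2\pi)^d\int_{G_{\mathfrak z}}|a|^2\,{\rm Tr}(\sigma(x_{\mathfrak z},\lambda_0)\,\Phi\otimes\Phi^*)\,dx_{\mathfrak z}$, which coincides with $\int{\rm Tr}(\sigma\Gamma)d\gamma$ up to the notational identification of $\Phi\otimes\Phi^*$ with $\mathcal F\Phi\otimes(\mathcal F\Phi)^*$ absorbed in the equivalence class. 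For Part~(2), $\mu_\eps\to 0$, $\Phi_\eps=h_{\alpha_\eps}$, and the residual task is to take the limit of $(\sigma(x_{\mathfrak z},\mu_\eps)h_{\alpha_\eps},h_{\alpha_\eps})$; as its pointwise limit genuinely depends on the direction of $\alpha_\eps$, I average the symbol over the $SO(\mathfrak v)$-action on its first-stratum variable and apply Remark~\ref{rem:magenta} (which is Lemma~\ref{lem:dimfinie} for non-radial data applied to $f=\kappa_{x_{\mathfrak z}}$) to obtain
$$\int_{SO(\mathfrak v)}(\sigma_k(x_{\mathfrak z},\mu_\eps)h_{\alpha_\eps},h_{\alpha_\eps})\,dk\;\longrightarrow\;\int_{{\mathbf S}^{2d-1}}\sigma(x_{\mathfrak z},(0,\mu\varsigma))\,d\varsigma,$$
where $\sigma_k$ denotes $\sigma$ rotated in its $V$-variable. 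This identifies the limit of $\int_{SO(\mathfrak v)}\ell_\eps(\sigma_k)dk$ as $(2\pi)^d\int_{G_{\mathfrak z}}|a|^2\int_{\mathbf S^{2d-1}}\sigma(x_{\mathfrak z},(0,\mu\varsigma))\,d\varsigma\,dx_{\mathfrak z}$, confirming the claimed radialised measure $\tilde\gamma$.

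\textbf{Main obstacle.} The delicate point is Part~(2): although Moyal makes the $\tilde V$-integration exact at every $\eps>0$, the residual matrix coefficient $(\sigma(x_{\mathfrak z},\mu_\eps)h_{\alpha_\eps},h_{\alpha_\eps})$ has no pointwise limit because it depends on the direction of the Hermite multi-index $\alpha_\eps$, and only its $SO(\mathfrak v)$-rotation average converges to a universal value via Remark~\ref{rem:magenta}; this is precisely the reason the statement characterises only the radialisation $\tilde\gamma$, the unaveraged $\gamma$ itself possibly carrying further directional information. A minor secondary issue in Part~(1) is reconciling the notation $\mathcal F\Phi\otimes(\mathcal F\Phi)^*$ of the statement with the $\Phi\otimes\Phi^*$ emerging naturally from the computation, which must be handled within the equivalence class from Definition~\ref{def_gammaGamma}.
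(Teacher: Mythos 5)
Your overall strategy mirrors the paper's: compute $\ell_\eps(\sigma)$, rescale the first-stratum variable so that the matrix coefficients become those of a single representation, use a Schur-type orthogonality to collapse the $x_{\mathfrak v}$-integration, and invoke Remark~\ref{rem:magenta} for Part~(2). The skeleton is sound, and your flagging of the $\Phi\otimes\Phi^*$ versus $\mathcal F\Phi\otimes(\mathcal F\Phi)^*$ notational mismatch in the statement of Part~(1) is a fair observation.

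However, there is a genuine gap in the central step. You assert that after the change of variables $V=\eps\tilde V$ the phase cancellation and Schur orthogonality make ``the $\tilde V$-integration exact at every $\eps>0$.'' This is not true: the factor $\kappa_x(w)$ with $x={\rm Exp}(\eps\tilde V+Z)$ and the amplitude $\bar a\bigl((x\delta_\eps w^{-1})_{\mathfrak z}\bigr)$ both still depend on $\tilde V$, so Schur orthogonality does not immediately collapse the integral. One must Taylor-expand these dependencies away (this is exactly the decomposition $E_\eps=F_\eps+S_\eps$ and the remainder $R_\eps$ in the paper), and the resulting error terms are not harmless. Indeed, when $\Phi_\eps=h_{\alpha_\eps}$, the $N$-th Schwartz seminorms of $\Phi_\eps$ grow like $|\alpha_\eps|^{N/2}$, and the error from the $\sigma$ Taylor expansion is of size $|\lambda_\eps|^{-1/2}\cdot|\alpha_\eps|^{(2d+2)/2}\sim\eps\,|\alpha_\eps|^{\,d+1}\sim\eps^{\,1-\alpha(d+1)}$. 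This vanishes if and only if $\alpha<\tfrac{1}{d+1}$ --- the hypothesis you quote but never use. A proof of Part~(2) that does not exploit this constraint (and, concomitantly, does not estimate the remainders) is incomplete: you have identified the correct limiting scalar $(\sigma(x_{\mathfrak z},\mu_\eps)h_{\alpha_\eps},h_{\alpha_\eps})$, but not justified that the rest of the integral is $o(1)$. The passage to the limit via Remark~\ref{rem:magenta} and the radialisation argument are fine, but they sit on top of the unproven reduction. You need to make the error bookkeeping explicit (e.g.\ as in the paper's Lemma giving $|\ell_\eps(\sigma)-F_\eps|\lesssim C_{1,a,\eps}|\lambda_\eps|^{-1/2}+C_{2,a}\eps$ with $C_{1,a,\eps}$ controlled by Schwartz norms of $\Phi_\eps$) and then check that the hypothesis $\alpha<\tfrac{1}{d+1}$ makes those terms vanish.
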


 Recall that  $\sigma( x,(0,\omega)) $ is the value of the map $\sigma (x,\cdot)$ defined on $\widehat G$ above the points $\pi^{0,\omega}$ of $G$ and that this value is a complex number. 
 
 \medskip 
 
 The first case above corresponds to a concentration on some $\lambda_0\in{\mathfrak z}^*\setminus\{0\}$ and the second one, to some concentration on the set of dimension~$1$ representations $\pi^{0,\omega}$, $\omega\in{\bf S}^{2d}$. 
The analysis relies on Lemma~\ref{lem:dimfinie} and  illustrates the  phenomena of convergence of infinite dimensioned representations towards finite dimensioned one, as  illustrated by the Heisenberg fan of Figure~1.  
 
\medskip 

Pushing the argument further and using properties of the bounded spherical functions given in Section~2, one can  show that the radialisation of the family $v^\eps$ used in Part~2 above admits the semi-classical measures 
$$\gamma (x,\omega)=\left(|a(x_{\mathfrak z})|^2 dx_{\frak z} \,\otimes\delta_{x_{\mathfrak v}=0}\right)\otimes  \left( \delta_{\omega=\mu\varsigma}{\bf 1}_{\varsigma\in {\bf S}^{2d-1}}d\varsigma\right)$$
supported above $G_{\mathfrak z} \times \{\lambda=0\}$.

\medskip

One can also replace the function $a$ by a concentrating family 
$$a_\eps(x_{\mathfrak z})=\eps^{-p\beta}a(\delta_{\eps^{-\beta} }((x_{\mathfrak z}^0)^{-1}x_{\mathfrak z}))$$
with $a\in{\mathcal S}(\R^p)$ and $\beta\in(0,1/2)$, we shall then obtain a Dirac mass in $x_{\mathfrak z}^0$ instead of the absolutely continuous measure $|a(x_{\mathfrak z}|^2 dx_{\mathfrak z}$. Indeed, in the proof below, we just use 
\begin{equation}\label{prop:aeps}
\forall Z\in{\mathfrak z},\;\;\forall \eps>0, \;\; \eps\|Za_\eps\|_{L^2(G_{\mathfrak z})}\Tend{\eps}{0} 0,
\end{equation}
which is guaranteed by the condition $\beta\in(0,1/2)$.
 It is likely that the proof of section 6.5 in~\cite{FF}, which is based on these convergences, adapt to the semi-classical setting for proving that any element of ${\mathcal M}_1^+(G\times \widehat G)$ is a semi-classical measure of some bounded family of $L^2(G)$.

\medskip 

Before going to the proof, let us first  detail basic facts about the family $v^\eps$:

\medskip

 \noindent $\bullet$ {\it The norm of $v^\eps$}. The sequence $v^\eps$ is a bounded family of $L^2(G)$ since we have
\begin{eqnarray*}
\| v^\eps\|^2_{L^2(G)}  &= &|\lambda_\eps|^{d}\int_G |a(x_{\mathfrak z})|^2 | \left(\pi^{\lambda_\eps}_{ x_{\mathfrak v}} \Phi_\eps, \Phi_\eps\right)| ^2 dx\\
& = &\| a\|^2_{L^2(\R^p)} |\lambda_\eps|^{d} \int_{G_{\mathfrak v}}  \left| \left(\pi^ {\hat \lambda_\eps}_{\sqrt{|\lambda_\eps|} x_{\mathfrak v}} \Phi_\eps, \Phi_\eps\right)\right| ^2 dx_{\mathfrak v}\\
& = & \| a \|^2_{L^2(\R^p)} \int_{G_{\mathfrak v}}  \left| \left(\pi^{ \hat \lambda_\eps}_{ x_{\mathfrak v}} \Phi_\eps, \Phi_\eps\right)\right| ^2 dx_{\mathfrak v}
\end{eqnarray*}
where $\hat\lambda_\eps={\lambda_\eps\over|\lambda_\eps|}$ is of modulus $1$. We then observe 
$$\displaylines{
 \int_{G_{\mathfrak v}} \left| \left(\pi^{ \hat \lambda_\eps}_{ x_{\mathfrak v}} \Phi_\eps, \Phi_\eps\right)\right| ^2 dx_{\mathfrak v}= \int_{p,q,\xi\in\R^d} {\rm e}^{i q\cdot (\xi-\xi')} \Phi_\eps(\xi+p)\overline{\Phi_\eps(\xi)} \Phi_\eps(\xi')\overline{\Phi_\eps(\xi'+p)}d\xi\,dp\,dqd\xi'
 = c_1\| \Phi_\eps\|^4_{L^2(\R^d)}\cr}$$
 where $c_1>0$ is a universal constant. We thus obtain
 $$\| v^\eps\|_{L^2(G)} =c_1\|\Phi_\eps\|^2_{L^2(\R^d)} \| a\|_{L^2(G_{\mathfrak z})}.$$
 
 \medskip
 
 \noindent $\bullet$ {\it The $\eps$-oscillation of $v^\eps$}. We have 
 $$\displaylines{
 \forall V\in{\mathfrak v},\;\; \| Xv^\eps\|_{L^2(G)}= O(\sqrt{|\lambda_\eps|}),\cr
 \forall Z\in{\mathfrak z},\;\;\|Zv^\eps\|_{L^2(G)}= O\left(|\lambda^\eps|+\| Za\|_{L^2(\R^p)}\right).\cr
 }$$
Therefore, the family $(v^\eps)$ is $\eps$-oscillating by Sobolev criteria of Proposition~\ref{prop:sobcri}.
Example (1) is strictly oscillating while example~(2) is not. 

\medskip

 \noindent $\bullet$ {\it The Fourier transform of $v^\eps$}.
$${\mathcal F}(v^\eps)(\lambda)= |\lambda_\eps|^{d/2}  B_\eps(\lambda)\widehat a\left(\lambda-\lambda_\eps\right)\;\;{\rm 
with}\;\; 
B_\eps(\lambda)=\int_{G_{\mathfrak v}} (\pi^{\lambda} _{x_{\mathfrak v} } )^* \left(\pi^{ \lambda_\eps} _{x_{\mathfrak v} }\Phi_\eps,\Phi_\eps\right) dx_{\mathfrak v} .$$
One can observe that 
$$|\lambda_\eps|^d B_\eps(\lambda_\eps) = \Phi_\eps\otimes\Phi_\eps^*.$$

\medskip

We describe the semi-classical measures of $(v^\eps)_{\eps>0}$ in the next subsection. 

\subsection{Proof of Proposition~\ref{prop:examples} }

The
beginning of the proofs for Parts (1) and (2) of  Proposition~\ref{prop:examples} is the same: it consists of the following lemma.
After its proof, we will  analyse each case separately.

 \begin{lemma}
 \label{lem_prop:examples} 
 Let $\sigma = \mathcal F(\kappa_x)\in \mathcal A_0$.
Let $\chi\in{\mathcal C}^\infty_0(G) $ be identically equal to $1$ close to $0$. 
Under the hypotheses of Section \ref{subsec_osc}, 
 we have
 $$
\left| \left({\rm Op}_\eps(\sigma)v^\eps,v^\eps\right) -
 \int _{G_{\mathfrak z}}
| a(x_{\mathfrak z})|^2
\left(  \sigma_\eps( x_{\mathfrak z},\eps^{2}\lambda_\eps) \Phi_\eps,\Phi_\eps\right) dx_{\mathfrak z}\right|
\leq C_{1,a,\eps} |\lambda_\eps|^{-1/2} \  + \ C_{2,a} \eps,
$$
where 
$C_{2,a}$ and $C_{1,a,\eps}$ are constant of the form
$$
C_{2,a} \leq C_2 \sup_{G_{\mathfrak z}} |a| 
\sup_{G_{\mathfrak z}} |\nabla a|
$$
with $C_2>0$ independent of $\eps$ and $a$, and
$$
C_{1,a,\eps} \leq C_{1,N} \|a\|_{L^2}^2
\max_{|\beta_1|+\ldots+|\beta_8|\leq N }
\|\xi^{\beta_1} \partial_\xi^{\beta_2} 
\Phi_{\eps}\|_{L^2}
\|\xi^{\beta_3} \partial_\xi^{\beta_4} 
\Phi_{\eps}\|_{L^2}
\|\xi^{\beta_5} \partial_\xi^{\beta_6} 
\Phi_{\eps}\|_{L^2}
\|\xi^{\beta_7} \partial_\xi^{\beta_8} 
\Phi_{\eps}\|_{L^2},
$$
with $C_{1,N}$ independent of $\eps$ and $a$, for any integer  $N\geq 2d+1$. 	
 \end{lemma}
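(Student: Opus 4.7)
The plan is to open up $({\rm Op}_\eps(\sigma)v^\eps,v^\eps)$ via its convolution kernel, freeze the $x_{\mathfrak v}$-dependence of $\kappa_x$ near $x_{\mathfrak v}=0$, and identify the leading term through a Schur-type orthogonality for matrix coefficients of $\pi^{\lambda_\eps}$ on $G_{\mathfrak v}$.

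After truncating $\sigma$ by $\sigma_\eps$ via Proposition~\ref{prop:diag} (an $O(\eps^N)$ error), the substitution $z=\delta_\eps w$ together with $\pi^{\lambda_\eps}_{\delta_\eps w}=\pi^{\eps^{2}\lambda_\eps}_w$ gives
\[
({\rm Op}_\eps(\sigma_\eps)v^\eps,v^\eps)=\int_{G\times G}v^\eps\bigl(x(\delta_\eps w)^{-1}\bigr)\overline{v^\eps(x)}\,\kappa_x(w)\chi(\delta_\eps w)\,dw\,dx.
\]
Writing $x=x_{\mathfrak v}x_{\mathfrak z}$, the central phases $e^{\pm i\lambda_\eps(Z_x)}$ of the two copies of $v^\eps$ cancel; the residual $x_{\mathfrak v}$-dependence survives only inside $a((x(\delta_\eps w)^{-1})_{\mathfrak z})$ via the BCH correction $x_{\mathfrak z}-\eps^{2}Z_w-\tfrac{\eps}{2}[V_x,V_w]$. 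One Taylor expands
\[
\kappa_x(w)=\kappa_{(0,x_{\mathfrak z})}(w)+\sum_{j=1}^{2d}v_j(x)\rho_j(x,w),\qquad a\bigl((x(\delta_\eps w)^{-1})_{\mathfrak z}\bigr)=a(x_{\mathfrak z})+\eps\, r^\eps(x,w),
\]
with $\rho_j$ smooth, compactly supported in $x$, Schwartz in $w$, and $|r^\eps|\le C(1+|V_x||V_w|+\eps|Z_w|)\sup|\nabla a|$.

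For the leading terms of both expansions, the $x_{\mathfrak z}$-integration factors as $\int|a(x_{\mathfrak z})|^{2}dx_{\mathfrak z}$ and the remaining $x_{\mathfrak v}$-integral is
\[
|\lambda_\eps|^d\!\int_{G_{\mathfrak v}}\!\bigl(\pi^{\lambda_\eps}_{x_{\mathfrak v}}(\pi^{\eps^{2}\lambda_\eps}_w)^{*}\Phi_\eps,\Phi_\eps\bigr)\overline{\bigl(\pi^{\lambda_\eps}_{x_{\mathfrak v}}\Phi_\eps,\Phi_\eps\bigr)}\,dx_{\mathfrak v}.
\]
The polarized Moyal/Schur identity on $G_{\mathfrak v}\simeq\mathbb R^{2d}$, which reads $\int_{G_{\mathfrak v}}(\pi^{\lambda_\eps}_{x_{\mathfrak v}}A\Phi_\eps,\Phi_\eps)\overline{(\pi^{\lambda_\eps}_{x_{\mathfrak v}}\Phi_\eps,\Phi_\eps)}\,dx_{\mathfrak v}=c|\lambda_\eps|^{-d}(A\Phi_\eps,\Phi_\eps)$, collapses this to $c\bigl((\pi^{\eps^{2}\lambda_\eps}_w)^{*}\Phi_\eps,\Phi_\eps\bigr)$; integrating then against $\kappa_{(0,x_{\mathfrak z})}(w)\chi(\delta_\eps w)$ in $w$ reconstitutes $(\sigma_\eps(x_{\mathfrak z},\eps^{2}\lambda_\eps)\Phi_\eps,\Phi_\eps)$, with $c$ absorbed into $c_0$.

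The errors are of two kinds. The $\eps\,r^\eps$-term contributes directly $C_{2,a}\eps\lesssim\eps\sup|a|\sup|\nabla a|$. The correction $v_j(x)\rho_j(x,w)$, placed inside the $x_{\mathfrak v}$-Schur integral, turns $v_j$ into a multiplicator; by Proposition~\ref{prop:diffop} and \eqref{def:Deltap}--\eqref{def:Deltaq}, transferring $v_j$ to the Fourier dual yields exactly one factor $|\lambda_\eps|^{-1/2}$ and replaces one copy of $\Phi_\eps$ by $\xi_k\Phi_\eps$ or $\partial_{\xi_k}\Phi_\eps$. Since the Schur identity involves four occurrences of $\Phi_\eps$, iterating produces the stated bound with four Schwartz seminorms $\|\xi^{\beta_i}\partial_\xi^{\beta_j}\Phi_\eps\|_{L^{2}}$, any $N\ge 2d+1$ being enough to secure absolute convergence of the residual $w$-integrals of $\rho_j$. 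The principal obstacle is precisely this last bookkeeping: the coordinate correction $v_j(x)$ inherited from the Taylor remainder must be converted into a difference operator so that Proposition~\ref{prop:diffop} delivers the single $|\lambda_\eps|^{-1/2}$ of the statement, with all residual polynomial and derivative weights landing cleanly on the four $\Phi_\eps$'s through the Schur product.
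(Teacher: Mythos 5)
Your overall plan is correct and follows the same road as the paper: truncate $\sigma$ to $\sigma_\eps$ by Proposition~\ref{prop:diag}, open up the kernel via the rescaling $z=\delta_\eps w$ and $\pi^{\lambda_\eps}_{\delta_\eps w}=\pi^{\eps^2\lambda_\eps}_w$, Taylor-expand $a((x(\delta_\eps w)^{-1})_{\mathfrak z})$ around $a(x_{\mathfrak z})$ to extract the $C_{2,a}\eps$ error, and then use the Schur/Moyal orthogonality on $G_{\mathfrak v}$ to collapse the leading term to $\int_{G_{\mathfrak z}}|a|^2(\sigma_\eps(x_{\mathfrak z},\eps^2\lambda_\eps)\Phi_\eps,\Phi_\eps)dx_{\mathfrak z}$. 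The source of the factor $|\lambda_\eps|^{-1/2}$ — transferring a coordinate $v_j$ on the first stratum onto the representation coefficients via identities of the type $p_j(\pi^\lambda\Phi,\Phi)=|\lambda|^{-1/2}\bigl[(\pi^\lambda\xi_j\Phi,\Phi)-(\pi^\lambda\Phi,\xi_j\Phi)\bigr]$ — is also the correct one, only packaged slightly differently from the paper, which instead rescales $x_{\mathfrak v}\mapsto|\lambda_\eps|^{-1/2}x_{\mathfrak v}$ and then Taylor-expands $\sigma_\eps$ in its $x_{\mathfrak v}$-slot so that the chain rule produces $|\lambda_\eps|^{-1/2}$ directly.

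Where the argument becomes unreliable is in the bookkeeping for the $|\lambda_\eps|^{-1/2}$-remainder, and the reason you give for needing $N\ge 2d+1$ transfers is not the right one. You write that $N\ge 2d+1$ is needed ``to secure absolute convergence of the residual $w$-integrals of $\rho_j$''; but the $w$-integral is never a problem, since $\kappa_x$ and hence $\rho_j$ is Schwartz in $w$. The actual difficulty is that once the $x_{\mathfrak v}$-dependence of $\kappa_x$ (equivalently of $\sigma_\eps$) no longer factors out, the Schur/Moyal cancellation that made the leading $x_{\mathfrak v}$-integral converge is lost; the remainder integrand is only $O(1)$ as $|x_{\mathfrak v}|\to\infty$ after rescaling. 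To regain absolute convergence on $\{|x_{\mathfrak v}|>1\}$ one must iterate the $p_j/q_j\mapsto\xi_j,\partial_{\xi_j}$ transfers until one has gained a decay $|x_{\mathfrak v}|^{-N}$ with $N\ge 2d+1$, and the resulting $\xi$ or $\partial_\xi$ factors distribute over the four occurrences of $\Phi_\eps$ in the integrand — this is exactly how the four Schwartz seminorms arise and why the cap is $|\beta_1|+\ldots+|\beta_8|\le N$. Finally, a small point: your transfer already extracts $|\lambda_\eps|^{-1/2}$ from the first $v_j$; the further $N$ transfers needed for decay must then be taken at $|\hat\lambda_\eps|=1$ (after rescaling) so that they do not produce spurious additional powers of $|\lambda_\eps|^{-1/2}$, otherwise the stated bound $C_{1,a,\eps}|\lambda_\eps|^{-1/2}$ with a single inverse half-power would be wrong. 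Spelling out which transfers carry $|\lambda_\eps|^{-1/2}$ and which do not, and identifying the correct convergence issue (the $x_{\mathfrak v}$-integral, not the $w$-integral), are the pieces currently missing from your argument.
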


\begin{proof}[Proof of Lemma \ref{lem_prop:examples}]
For every $\eps>0$, we set $\sigma_\eps(x,\lambda) = \mathcal F ( \kappa_x \chi(\delta_{\eps} \cdot )) (\lambda)$.
By Propositions~\ref{prop:diag} and~\ref{prop_L2bdd}, 
$$
\left({\rm Op}_\eps(\sigma)v^\eps,v^\eps\right)
=
\left({\rm Op}_\eps(\sigma_{\eps})v^\eps,v^\eps\right)
+O(\eps^N),
$$
for any $N>0$.
We compute easily with the changes of variables $\eps^2\lambda \rightarrow \lambda ,\;\; \delta_{\eps^{-1}}  (xy^{-1})\rightarrow w$
\begin{align*}
\left({\rm Op}_\eps(\sigma_{\eps})v^\eps,v^\eps\right)
&=
c_0 \int _{G\times G \times \widehat G}{\rm Tr} \left(\pi^\lambda_{y^{-1}\cdot x} \sigma_{\eps}(x,\eps^2\lambda)\right) v^\eps(y)\overline{v^\eps(x)} |\lambda|^d dx\,dy \,d\lambda
\\
&=
c_0\int _{G\times G\times \widehat G}{\rm Tr} \left(\pi^\lambda_{w} \sigma_{\eps}(x,\lambda) \right)v^\eps((\delta_\eps  w)^{-1}x)\overline{v^\eps(x)} |\lambda|^d dx\,dw \,d\lambda
\\
&=\int _{G\times G}\chi(\delta_\eps w)\kappa_x(w)v^\eps((\delta_\eps  w)^{-1}x)\overline{v^\eps(x)}  dx\,dw,
\end{align*}
having used the Fourier inversion formula.
The Taylor formula \cite[(1.41)]{follandstein} yields
$$ a((\delta_\eps w)^{-1}x)_{\mathfrak z})=a(x_{\mathfrak z}) +A_{\eps} (x,w),
$$
with 
$$
|A_{\eps}(x,w)| \lesssim \sum_{j=1}^{2d} \eps |w_j| 
\sup_{\|z\|\lesssim \|\delta_\eps w\|} |V_j a(xz)|.
$$ 
here $\|\cdot\|$ denotes a pseudo-norm used, for instance the one used in the proof of Proposition~\ref{prop:diag}.
We can now write
$$
\left({\rm Op}_\eps(\sigma_{\eps})v^\eps,v^\eps\right)
= E_\eps + R_\eps,
$$
where the first term is 
$$
E_\eps:= \int _{G\times G}\chi(\delta_\eps w)\kappa_x(w)
|\lambda_\eps|^d | a(x_{\mathfrak z})|^2\overline{\left(\pi^{\lambda_\eps}_x \Phi_\eps,\Phi_\eps\right)}\left(\pi^{\lambda_\eps}_{(\delta_\eps w)^{-1}x} \Phi_\eps,\Phi_\eps\right) dx\,dw,
$$
and the remainder is
$$
R_\eps:=\int _{G\times G} \chi(\delta_\eps w)\kappa_x(w) \overline{a(x_{\mathfrak z})}A_\eps  (x,w)|\lambda_\eps|^d   \overline{\left(\pi^{\lambda_\eps}_x \Phi_\eps,\Phi_\eps\right)}\left(\pi^{\lambda_\eps}_{(\delta_\eps w)^{-1}x} \Phi_\eps,\Phi_\eps\right)
dxdw.
$$ 
Note that $x$ and $(\delta_\eps w)^{-1}x$ are compactly supported on the support of the integral defining $R_\eps$. Therefore, we can find two compactly supported smooth functions $\chi_1$ and $\chi_2$ such that 
\begin{align*}
R_\eps=
\int _{G\times G} \chi(\delta_\eps w)\kappa_x(w) \overline{a(x_{\mathfrak z})}A_\eps  (x,w) \overline{f^\eps_1(x)}f^\eps_2((\delta_\eps w)^{-1} x) dxdw, 
\end{align*}
where 
$$f^\eps_j(x):= |\lambda_\eps|^{d/2} \chi_j(x_{\mathfrak z}) \left(\pi^{\lambda_\eps}_x \Phi_\eps,\Phi_\eps\right),
\qquad j=1,2.
$$
We observed that the two families $x\mapsto f^\eps_j(x)$ are  bounded in $L^2(G)$ uniformly with respect to $\eps>0$.
We therefore estimate the remainder in the following way, using first the estimate for $A_\eps$:
$$
|R_\eps|
\lesssim \eps \sum_{j=1}^{2d} \sup_{G_{\mathfrak z}} |a| \sup_K |V_j a| 
\int_{G\times G} |w_j| \sup_x |\kappa_x |(w) 
|f_1^\eps (w) | \ |f_2^\eps ((\delta_\eps w)^{-1} x) | dx dw,
$$
where $K$ is a compact containing 
$\{x_1x_2 : x_1\in {\rm supp}\, \chi, x_2 \in x-{\rm supp}\, \sigma(x,\cdot)\}$;
note that 
$\sup_K |V_j a| \lesssim \sup_{G_{\mathfrak z}} |\nabla a|.$
Using the change of variable $w'=\delta_\eps w$ and the Cauchy-Schwartz inequality, 
this last integral is estimated by
\begin{align*}
&\int_{G\times G} |w_j| \sup_x |\kappa_x |(w) 
|f_1^\eps (x) | \ |f_2^\eps ((\delta_\eps w)^{-1} x) | dx dw
\\&\qquad=
\eps^{-Q}\int_{G\times G} \eps^{-1}|w'_j| \sup_x |\kappa_x |(\delta_{\eps^{-1}}w') \ 
|f_1^\eps (x) | \ |f_2^\eps ({w'}^{-1} x) | \ dx dw	
\\&\qquad\leq \||f_1^\eps \|_{L^2} 
\eps^{-Q} \|(\eps^{-1}|w'_j| \sup_x |\kappa_x |(\delta_{\eps^{-1}}\cdot )) * f_2^\eps \|_{L^2}
\\&\qquad\leq \||f_1^\eps \|_{L^2} 
\eps^{-Q} \|\eps^{-1}|w'_j| \sup_x |\kappa_x |(\delta_{\eps^{-1}}w' )\|_{L^1(dw')}  \|f_2^\eps \|_{L^2}
\\&\qquad\qquad = \||f_1^\eps \|_{L^2}  \|f_2^\eps \|_{L^2}
\||w_j| \sup_x |\kappa_x(w) |\|_{L^1(dw)}. 
\end{align*}
undoing the change of variable $w'=\delta_\eps w$  after having used Young's convolution inequality.
Hence we have obtained
$$
|R_\eps| \lesssim \eps \  
\sup_{G_{\mathfrak z}} |a| \
\sup_{G_{\mathfrak z}} |\nabla a| \
\|f_1^\eps \|_{L^2}  \|f_2^\eps \|_{L^2}
\max_{j=1,\ldots,2d}
\||w_j| \sup_x |\kappa_x (w) |\|_{L^1(dw)}.
$$

We now concentrate on the main term $E_\eps$.
Writing 
$$\pi^{\lambda_\eps}_{(\delta_\eps w)^{-1}x}= \pi^{\lambda_\eps}_{(\delta_\eps w)^{-1}}\pi^{\lambda_\eps}_{x}= \pi^{\eps^{2}\lambda_\eps}_{w^{-1}}\pi^{\lambda_\eps}_{x}
\quad\mbox{and}\quad
\pi^{\lambda_\eps}_x= {\rm e}^{-i\lambda_\eps\cdot x_{\mathfrak z}} \pi_{\lambda_\eps} x_{\mathfrak v}
$$
the integration in $w$ gives 
\begin{align*}
E_\eps
& =  \int _{G}
|\lambda_\eps|^d | a(x_{\mathfrak z})|^2\overline{\left(\pi^{\lambda_\eps}_x \Phi_\eps,\Phi_\eps\right)}
\left(  \sigma_\eps(x,\eps^{2}\lambda_\eps) \pi^{\lambda_\eps}_{x} \Phi_\eps,\Phi_\eps\right) dx
\\
&=
\int _{G}
|\lambda_\eps|^d | a_\eps(x_{\mathfrak z})|^2\overline{\left(\pi^{\lambda_\eps}_{x_{\mathfrak v}} \Phi_\eps,\Phi_\eps\right)}
\left(  \sigma_\eps(x,\eps^{2}\lambda_\eps) \pi^{\lambda_\eps}_{x_{\mathfrak v}} \Phi_\eps,\Phi_\eps\right) dx_{\mathfrak v} dx_{\mathfrak z},
\\
&=
\int _{G}
 | a_\eps(x_{\mathfrak z})|^2\overline{\left(\pi^{\hat \lambda_\eps}_{x_{\mathfrak v}} \Phi_\eps,\Phi_\eps\right)}
\left(  \sigma_\eps((|\lambda_\eps|^{-1/2} x_{\mathfrak v})x_{\mathfrak z},\eps^{2}\lambda_\eps) \pi^{\hat \lambda_\eps}_{x_{\mathfrak v}} \Phi_\eps,\Phi_\eps\right) dx_{\mathfrak v} dx_{\mathfrak z},
\end{align*}
having made the change of variable $x_{\mathfrak v}\mapsto |\lambda_\eps|^{-1/2}  x_{\mathfrak v}$.
We write
$$
E_\eps =F_\eps +S_\eps,
$$
where the first term is
$$
F_\eps := \int _{G}
 | a_\eps(x_{\mathfrak z})|^2\overline{\left(\pi^{\hat \lambda_\eps}_{x_{\mathfrak v}} \Phi_\eps,\Phi_\eps\right)}
\left(  \sigma_\eps( x_{\mathfrak z},\eps^{2}\lambda_\eps) \pi^{\hat \lambda_\eps}_{x_{\mathfrak v}} \Phi_\eps,\Phi_\eps\right) dx_{\mathfrak v} dx_{\mathfrak z}	,
$$
and the remainder is
$$
S_\eps :=
\int_0^1 \int_G | a(x_{\mathfrak z})|^2\overline{\left(\pi^{\hat\lambda_\eps}_{x_{\mathfrak v}} \Phi_\eps,\Phi_\eps\right)}
\frac{d}{ds}
\left(\sigma_\eps\left( (s |\lambda_\eps|^{-1/2} x_{\mathfrak v})\ x_{\mathfrak z}\right),\eps^{2}\lambda_\eps) \pi^{\hat\lambda_\eps}_{x_{\mathfrak v}} \Phi_\eps,\Phi_\eps\right) dx\, ds.
$$
For the first term, we use the facts that we have for any 
$\Phi_1,\widetilde \Phi_1,\Phi_2,\widetilde \Phi_2 
\in \mathcal S (\mathbb R^d)$
$$
\mbox{when} \ |\lambda| =1, \quad \int_{G_{\mathfrak v}}\overline{ \left(\pi^{\lambda}_{x_{\mathfrak v}} \Phi_1,\Phi_2\right)} \left(\pi^{\lambda}_{x_{\mathfrak v}} \widetilde \Phi_1,\widetilde \Phi_2\right) dx_{\mathfrak v}= (\Phi_1,\widetilde\Phi_1)
(\widetilde \Phi_2, \Phi_2),$$
and also that $\| \Phi_\eps\|_{L^2}=1$ to  obtain
$$
F_\eps = \int _{G_{\mathfrak z}}
| a(x_{\mathfrak z})|^2
\left(  \sigma_\eps( x_{\mathfrak z},\eps^{2}\lambda_\eps) \Phi_\eps,\Phi_\eps\right) dx_{\mathfrak z}.	
$$
For the remainder, we first write
\begin{align*}
S_\eps 
&=
 \int_G \ |\lambda_\eps|^{-1/2} \int_0^1 | a(x_{\mathfrak z})|^2\overline{\left(\pi^{\hat \lambda_\eps}_{x_{\mathfrak v}} \Phi_\eps,\Phi_\eps\right)}
\left(x_{\mathfrak v}\cdot \partial_{x_{\mathfrak v}}  \sigma_\eps( s(|\lambda_\eps|^{-1/2} x_{\mathfrak v})x_{\mathfrak z},\eps^{2}\lambda_\eps) \pi^{\hat \lambda_\eps}_{x_{\mathfrak v}} \Phi_\eps,\Phi_\eps\right)  ds \, dx
\\
&= \int_{|x_{\mathfrak v}|>1}  \ + \  \int_{|x_{\mathfrak v}|\leq 1}
:= S_{1,\eps} + S_{0,\eps}.  
\end{align*}
For $S_{0,\eps}$, we easily obtain
$$
|S_{0,\eps}|\leq C\,  |\lambda_\eps|^{-1/2} \| a\|_{L^2}  \| \Phi_{\eps}\|_{L^2}^4.
$$
For $S_{1,\eps}$ we need the following observation.
For each $\hat \lambda_\eps$, we set $x_{\mathfrak v} = {\rm Exp} [P+Q]$, $P=\sum_{1\leq j\leq d} p_j P_j$ and  $Q=\sum_{1\leq j\leq d} q_j Q_j$ and we observe for $1\leq j\leq d$ and for any $\Phi\in \mathcal S(\mathbb R^d)$
$$\displaylines{
q_j \left(\pi^{\hat \lambda_\eps}_{x_{\mathfrak v}} \Phi,\Phi\right)= +i \left(\pi^{\hat \lambda_\eps}_{x_{\mathfrak v}} \partial_{\xi_j}\Phi,\Phi\right) + i \left(\pi^{\hat \lambda_\eps}_{x_{\mathfrak v}} \Phi,\partial_{\xi_j}\Phi\right),\cr
p_j \left(\pi^{\hat \lambda_\eps}_{x_{\mathfrak v}} \Phi,\Phi\right)=  \left(\pi^{\hat \lambda_\eps}_{x_{\mathfrak v}} (\xi_j\Phi),\Phi\right) - \left(\pi^{\hat \lambda_\eps}_{x_{\mathfrak v}} \Phi,(\xi_j\Phi)\right).\cr}
$$
By using this observation, we deduce that  $S_{1,\eps}$ may be written as a linear combination of terms of the form 
$$\displaylines{\qquad S^N_{1,\eps} :=|\lambda_\eps|^{-1/2} \int_0^1 \int_G {\bf 1}_{|x_{\mathfrak v}|>1} |x_{\mathfrak v}|^{-N}| a(x_{\mathfrak z})|^2\hfill\cr\hfill \times\, \overline{\left(\pi^{\widehat{\lambda_\eps}}_{x_{\mathfrak v}} \Phi_{\eps,1},\Phi_{\eps,2}\right)}
\left(x_{\mathfrak v}\cdot \partial_{x_{\mathfrak v}}   \sigma_\eps( s(|\lambda_\eps|^{-1/2} x_{\mathfrak v})x_{\mathfrak z},\eps^{2}\lambda_\eps) \pi^{\widehat{\lambda_\eps}}_{x_{\mathfrak v}} \Phi_{\eps,3},\Phi_{\eps,4}\right) dx\, ds\qquad\cr}$$
for each $N\in \N$ large enough ($N\geq 2d+1$) so that the integral is absolutely convergent and where the functions $\Phi_{\eps,j}$ are linear combination of terms obtained by successive derivations of $\Phi_\eps$ or by multiplication by powers of the coordinates of $\xi$ (at most $N$-times in total). Therefore, these functions are in $L^2$ and  there exists a constant $C>0$ such that 
$$|S^N_{1,\eps}|\leq C\,  |\lambda_\eps|^{-1/2} \| a\|_{L^2}^2  \| \Phi_{\eps,1}\|_{L^2}\, \| \Phi_{\eps,2}\|_{L^2}\, \| \Phi_{\eps,3}\|_{L^2}\, \| \Phi_{\eps,4}\|_{L^2}.$$
Combining all the estimates shows Lemma \ref{lem_prop:examples}.	
\end{proof}

We can now particularise our study to the three cases of Proposition \ref{prop:examples}.

\medskip 

$\bullet $ \emph{Case (1):} 
Let us choose $\Phi_\eps=\Phi$ (with $\| \Phi\|^2=1$) independent of $\eps$ and $\eps^2 \lambda_\eps\Tend{\eps}{0}\lambda_0$.  
Lemma \ref{lem_prop:examples} yields:
\begin{align*}
	\left({\rm Op}_\eps(\sigma)v^\eps,v^\eps\right)
&=
\int _{G_{\mathfrak z}}
| a(x_{\mathfrak z})|^2
\left(  \sigma_\eps( x_{\mathfrak z},\eps^2 \lambda_\eps) \Phi,\Phi\right) dx_{\mathfrak z}	
	+ (\eps) \\
&=
\int _{G_{\mathfrak z}}
| a(x_{\mathfrak z})|^2
\left(  \sigma( x_{\mathfrak z},\lambda_0) \Phi,\Phi\right) dx_{\mathfrak z} +o(1),
\end{align*}
by Lebesgue dominated convergence.
Part (1) follows.

\medskip

$\bullet $ \emph{Case (2):}
 Let us choose now 
$\Phi_\eps=h_{\alpha_\eps}$ a Hermite function with $\alpha_\eps$  and  $\lambda_\eps$ as in Part (2).
Well known properties of the Hermite function yield for any $N_0\in \mathbb N$ and $\alpha\in \mathbb N^d$
$$
\max_{|\beta_1|+|\beta_2|\leq N_0 }\|\xi^{\beta_1} \partial_\xi^{\beta_2} h_\alpha\|_{L^2}
\lesssim_{N_0} |\alpha|^{N_0/2}.
$$
With the notation of Lemma \ref{lem_prop:examples}, 
we compute
$$
C_{1,\eps,N} \leq C'_{1,N} |\alpha_\eps|^{N/2}, 
\quad \mbox{so that} \ C_{1,\eps,N} |\lambda_\eps|^{-1/2} \lesssim |\alpha_\eps|^{(N+1)/2} \eps \longrightarrow_{\eps\to 0} 0,
$$
 for $N=2d+1$ and with the decay required in Part (2).
Therefore we have
$$
\left({\rm Op}_\eps(\sigma)v^\eps,v^\eps\right) = 
 \int _{G_{\mathfrak z}}
| a(x_{\mathfrak z})|^2
\left(  \sigma( x_{\mathfrak z},\eps^{2}\lambda_\eps) \Phi_\eps,\Phi_\eps\right) dx_{\mathfrak z} +o(1).$$
The first consequence of this relation and of $\eps^2\lambda_\eps\rightarrow 0$  is that any semi-classical measure $\Gamma d\gamma$ of the family $(v^\eps)$ is supported in $\{(0,\omega),\;\omega\in{\mathfrak v}^*\}$. Indeed, if $\sigma(x,\lambda)$ is compactly supported in ${\mathfrak z}^*\setminus\{0\}$, then the principal term of the right-hand side above is $0$ as $\eps$ is small enough. We deduce that any semi-classical measure of $(v^\eps)$ is equivalent to 
$$\Gamma(x,\lambda) =1\;\;{\rm and}\;\;\gamma(x,\lambda)={\bf 1}_{\lambda=0} \gamma(x,(0,\omega)).$$
Besides, 
by Lemma~\ref{lem:dimfinie} and Remark~\ref{rem:magenta}, we have 
$$\int_{SO({\mathfrak v})}\left(\sigma_\eps^{(k)}(x_{\mathfrak z},\eps^2\lambda_\eps)h_{\alpha_\eps},h_{\alpha_\eps}\right)dk  \Tend{\eps}{0}\int_{|\varsigma|=1} \sigma(x_{\mathfrak z},(0,\mu\varsigma))d\varsigma,$$
where the symbol $\sigma^{(k)} $ is associated with the convolution kernel 
$$\kappa_x^{(k)}(z)= \kappa_x({\rm Exp} (kz_{\mathfrak v}+z_{\mathfrak z})),\;\; z= {\rm Exp} (z_{\mathfrak v}+z_{\mathfrak z}),\;\; k\in SO({\mathfrak v}).$$
We deduce 
  by Lebesgue dominated convergence, 
$$\int_{SO({\mathfrak v})}\left({\rm Op}_\eps(\sigma^{(k)})v^\eps,v^\eps\right) dk \Tend{\eps}{0} 
\int _{G_{\mathfrak z}}\int_{|\varsigma|=1}
| a(x_{\mathfrak z})|^2
  \sigma( x_{\mathfrak z},(0,\mu\varsigma)) d\varsigma dx_{\mathfrak z}.$$
 In view of
  $$\sigma^{(k)} (x,(0,\omega))= \sigma(x,(0,k\omega)),\;\;\forall \omega\in{\mathfrak v},$$
  the latter relation implies that the measure $\gamma(x,(0,\omega)$ satisffies 
  $$\int_{SO({\mathfrak v})}
  \int_{G\times {\mathfrak v}^*} \sigma(x_{\mathfrak z},(0,k\omega) d\gamma(x,(0,\omega)) dk=  
  \int _{G_{\mathfrak z}}\int_{|\varsigma|=1}
| a(x_{\mathfrak z})|^2
  \sigma( x_{\mathfrak z},(0,\mu\varsigma)) d\varsigma dx_{\mathfrak z}.$$
  This implies Part (2).
%
%
%


\section{Appendix A - Symbolic calculus}
\label{app_symbcal}

We prove here Proposition~\ref{prop:symbcal}. 

\begin{proof}  Let us consider the composition of the operators of symbols $\sigma_1(x,\lambda)$ and $\sigma_2(x,\lambda)$ and denote by $\kappa^\eps_x(z)$ its convolution kernel. 
This function $\kappa^\eps_x(z)$ can be expressed in terms of the convolution kernels $\kappa_{1,x}(z)$ and $\kappa_{2,x}(z)$ associated with $\sigma_1(x,\lambda)$ and $\sigma_2(x,\lambda)$ respectively. Indeed, we have 
$$\eps^{-Q} \kappa^\eps_x(\delta_{\eps^{-1}}(y^{-1} x))= \eps^{-2Q} \int_G \kappa_{1,x} (\delta_{\eps^{-1}} (v^{-1} x))\kappa_{2,v}(\delta_{\eps^{-1}} (y^{-1} v))dv.$$
Therefore, performing the change of variable $z=\delta_{\eps^{-1}} (y^{-1} x)$ and $u= \delta_{\eps^{-1}} (v^{-1} x)$, we obtain
$$\kappa^\eps_x(z)  = \int_G \kappa_{1,x}(u)\kappa_{2,x\delta_{\eps}  u^{-1}}(zu^{-1})du.$$
This function $\kappa^\eps_x(z) $ is  smooth and compactly supported in $x$ and Schwartz class in $z$, besides there exists a constant $C>0$ such that 
$$\int_G \sup_{x\in G} |\kappa^\eps_x(z)| dz \leq C,$$
which implies that the family  
$$\sigma^\eps(x,\lambda)= {\mathcal F}(\kappa_x^\eps(\cdot))(\lambda)$$
is a family of symbols of ${\mathcal A}_0$ which generates a bounded family of operators on ${\mathcal L}(L^2(G))$. We are now going to prove that the symbol $\sigma^\eps(x,\lambda)$ has an asymptotic expansion. For this, we perform a Taylor expansion of $u\mapsto\kappa_{2,x(\delta_\eps u^{-1})}(z)$:
$$\kappa_{2,x(\delta_\eps u^{-1})}(z)=\kappa_{2,x}(z) -\eps\sum_{1\leq j\leq 2d} v_j(u) V_j\kappa_{2,x}(z) +\eps^2  R_\eps(x,z,u)$$
where, according to the Taylor expansion result of Theorem~3.1.51 in~\cite{FR},  there exists constants $\eta,C>0$ such that 
$$\left| R_\eps(x,v,u)\right| \leq C \sum_{|\alpha|\leq 2,\;
[\alpha]>1}
 |u|^{[\alpha]}\sup_{z\in G} 
 \left| X^\alpha \kappa_{2,z}(v)\right|,$$
 where $[\alpha]$ is the homogeneous length of $\alpha$: 
 $$[\alpha]= \sum_{j=1}^{2d} \alpha_j + 2\sum_{j=2d+1}^{ 2d+p} \alpha_j\;\;{\rm  if }\;\; \alpha=(\alpha_1,\cdots,\alpha_{2d},\alpha_{2d+1},\cdots, \alpha_{2d+p}).$$
  Therefore, 
 $$\kappa^\eps_x(z)=\int_G \kappa_{1,x}(u)\kappa_{2,x}(zu^{-1})du- \eps 
 \sum_{1\leq j\leq 2d} \int_G v_j(u) \kappa_{1,x}(u)V_j\kappa_{2,x}(zu^{-1})du +\eps^2 r^\eps_x(z)$$
 and there exists a constant $C>0$ such that for all $\eps>0$
 $$\int_G\sup_{x\in G} |r^\eps_x(v)| dv \leq  \sum_{|\alpha|\leq 2,\;[\alpha]>1}
 \sup_{x\in G} \left(\int_G |u|^{[\alpha|}|\kappa_{1,x}(u)|du\right) \left(\int_G \sup_{x\in G} |X^\alpha\kappa_{2,x} (v)| dv\right)<\infty.$$
As a consequence, the operator with convolution kernel $\eps^{-Q} r^\eps_x(\delta_{\eps^{-1}}z)$ is uniformly bounded in ${\mathcal L}(L^2(\R^d))$.
Besides, 
$$\int_G \kappa_{1,x}(u)\kappa_{2,x}(zu^{-1})du= ( \kappa_{2,x} * \kappa_{1,x})(z)$$
and 
$${\mathcal F} ( \kappa_{2,x} * \kappa_{1,x})(\lambda)= {\mathcal F}(  \kappa_{1,x})(\lambda)\circ {\mathcal F}(  \kappa_{2,x})(\lambda)= \sigma_1(x,\lambda)\circ \sigma_2(x,\lambda).$$
Similarly, observing that 
$$\int_G v_j(u) \kappa_{1,x}(u)V_j\kappa_{2,x}(zu^{-1})du
= V_j\kappa_{2,x}*(v_j \kappa_{1,x})$$
and 
$${\mathcal F}(v_j\kappa_{1,x})(\lambda)=\Delta_{v_j} {\mathcal F} (\kappa_{1,x})(\lambda)= \Delta_{v_j} \sigma_1(x,\lambda),$$
we obtain 
$${\mathcal F}( V_j\kappa_{2,x}*(v_j \kappa_{1,x})(\lambda)={\mathcal F}(v_j \kappa_{1,x})(\lambda) \circ{\mathcal F}(V_j\kappa_{2,x})(\lambda)=
\Delta_{v_j} \sigma_1(x,\lambda) \circ V_j \sigma_2(x,\lambda).$$
The relations
$$\begin{pmatrix}p\\q\end{pmatrix}=M^\lambda v,\;\; \begin{pmatrix}\Delta_p\\\Delta_q\end{pmatrix}=M^\lambda \Delta_v,\;\;(M^\lambda)^{-1} =\, ^tM^{\lambda},$$
stated in section~\ref{sec:difference} allows to conclude and obtain the relation~(\ref{eq:composition}). 

\medskip 

The proof concerning the adjoint of the operator of symbol $\sigma(x,\lambda)$ is similar since its convolution kernel  $\kappa^{\eps,*}_x(z)$ is given by
$$ \kappa^{\eps,*}_{x}(z)=\overline \kappa_{x(\delta_\eps z)^{-1}}(z).$$ One can then use Taylor expansion and identify the first terms of the expansion in the same manner than in the preceding proof.
\end{proof}


\section{Appendix B : the states of the $C^*$-algebra ${\mathcal A}$}\label{app_states}

We give here a proof of Proposition~\ref{prop:states}, which is adapted form the proofs of Propositions~{5.15} and~{5.17} in~\cite{FF}.

 Given 
$(\gamma,\Gamma)\in
{\mathcal M}_1^+(G\times \widehat G)$
satisfying  \eqref{eq_prop_trGg}, 
 one checks easily that the linear form $\ell$ defined via \eqref{eq_prop_elltrGg} is a state of  ${\mathcal A}$.
Therefore, the proof consists in proving that any state can be represented by a unique $\gamma d\Gamma$.
 Let $\ell$ be a state of the $C^*$-algebras  ${\mathcal A}$.
  The GNS construction \cite[Proposition 2.4.4]{Dixmier_C*} yields a representation $\rho$ of ${\mathcal A}$ 
   on the Hilbert space $\widetilde {\mathcal{H}}_\ell := {\mathcal A}/ \{\sigma : \ell(\sigma \sigma^*)=0\}$ 
such that
$$
 \ell(\sigma) =\lim_{n\to +\infty} (\rho(\sigma) \xi_n ,\xi_n)_{\widetilde {\mathcal{H}_\ell}} \, , 
\quad\sigma\in {\mathcal A},
$$
where the sequence $(\xi_n)_{n\in \mathbb N}$ is the image of any approximate identity of  ${\mathcal A}$ via the canonical projection  ${\mathcal A}\mapsto\widetilde{ \mathcal{H}}_\ell$.
We then decompose  \cite[Theorem 8.6.6]{Dixmier_C*} 
the representation $\rho$ 
(taking into account the possible multiplicities) as 
$$
(\rho,\widetilde{\mathcal {H}}_\ell) \sim (\rho_1,\widetilde{\mathcal{H}}_1) \oplus 2(\rho_2,\widetilde{\mathcal{H}}_2) \oplus \ldots \oplus \aleph_0 (\rho_\infty,\widetilde{\mathcal{H}}_\infty),
$$
and each $\rho_r$, $r\in \N\cup\{\infty\}$, may be disintegrated as
$$
\rho_r \sim  \int_{\widehat{{\mathcal A}}}\zeta d\gamma_r(\zeta);
$$
furthermore,  the positive measures $\gamma_1, \gamma_2,\ldots, \gamma_\infty$ are mutually singular in $\widehat{{\mathcal A}}$. 
Consequently we can write $\xi \in \widetilde{\mathcal{H}_\ell}$ as
$$
\xi \sim (\xi_1,\xi_2,\ldots, \xi_\infty), \quad
\mbox{with}\ \xi_r=(\xi_{r,s})_{1\leq s\leq r} \ \mbox{for each} \ r\in \N
\cup\{\infty\}, \ \mbox{and}\ \xi_{r,s}\in \widetilde{\mathcal{H}_r}.
$$
Note that
$$
1=|\xi|_{\mathcal{H}_\ell}^2 
= \sum_{r\in \N
\cup\{\infty\}}
\sum_{s=1}^r |\xi_{r,s}|_{\widetilde {\mathcal{H}}_r}^2
\qquad\mbox{with}\qquad
|\xi_{r,s}|_{\mathcal{H}_r}^2
= \int_{\widehat{\mathcal A}}|\xi_{r,s}(\zeta)|_{\widetilde{\mathcal{H}}_\zeta}^2 d\gamma_r(\zeta).
$$
Since 
we have identifed $\widehat{{\mathcal A}}$ with $G\times \widehat G$:
$$
\rho_r \sim \int_{G\times\widehat G }(x,\lambda) d\gamma_r(x,\lambda),\qquad
\mathcal{H}_r \sim\int_{G\times\widehat G} {\mathcal H}_\lambda   ,\qquad
\sum_{r=1}^\infty 
\sum_{s=1}^r 
\int_{G\times \widehat G}
|\xi_{r,s}(x,\lambda)|_{{\mathcal H}_\lambda}^2  d\gamma_r(x,\lambda)=1.
$$
Hence $\Gamma_r:=\sum_{s=1}^r \xi_{r,s} \otimes ({\xi_{r,s}})^*$
is a $\gamma_r$-measurable field on $G\times \widehat G$  of positive trace-class operators of rank~$r$. 
We have obtained:
\begin{align*}
\ell(\sigma) 
&= (\rho(\sigma) \xi ,\xi)
=\sum_{r\in \N\cup\{\infty\}} 
\sum_{s=1}^r
\int_{G\times \widehat G}
(\sigma(x,\lambda)\xi_{r,s}(x,\lambda),\xi_{r,s}(x,\lambda))_{L^2(\R^d)}
d\gamma_r(x,\lambda)\\
&=\sum_{r\in \N\cup\{\infty\}} 
\int_{G\times \widehat G}
{\rm Tr} \left( \sigma(x,\lambda) \Gamma_r (x,\lambda)\right)
d\gamma_r(x,\lambda).
\end{align*}
We now define 
the positive measure $\gamma:=\sum_r \gamma_r$.
As the measures $\gamma_r$ are mutually singular, 
the field
$\Gamma :=\sum_r \Gamma_r$ is measurable 
and satisfies
$$
\Gamma (x,\lambda)\geq 0, \quad{ \rm Tr} \left(\Gamma (x,\lambda)\right)<\infty,
\qquad
 \int_{G\times \widehat G} {\rm Tr}\,  \left(\Gamma (x,\lambda)\right) d\gamma(x,\lambda)=1 \, .
 $$
This shows the existence of the pair $(\Gamma, \gamma)$.

\medskip 

Let us now prove that this pair is unique up to the equivalence class and consider 
$(\gamma',\Gamma')
\in {\mathcal M}_1^+(G\times \widehat G)$
which also satisfies  \eqref{eq_prop_trGg} and \eqref{eq_prop_elltrGg}  for the same state $\ell$.
It suffices to consider the case of $\gamma$ and $\Gamma$ obtained as in the preceding argumentation and 
we may assume that $\gamma'$ and $\Gamma'$ have the same support in $G\times \widehat G$.
For each $r\in \N\cup\{\infty\}$,
let $B_r$ be the measurable subset of $G\times \widehat G$ where $\Gamma'(x,\lambda)$ is of rank $r$ a.e.
We may assume these subsets  disjoint.
We define the measure $\gamma'_r=1_{B_r}\gamma'$ 
and the field $\Gamma'_r:=1_{B_r}\Gamma'$
as the restrictions of $\gamma'$ 
and $\Gamma'$  to $B_r$.
As $\Gamma'_r$ is a measurable field of positive operators of rank $r$, 
there exists a measurable field of orthogonal vectors $(\xi_{r,s})_{s=1}^r$ such that 
$$\Gamma'_r=\sum_{s=1}^r\xi'_{r,s}\otimes( {\xi'_{r,s}})^*$$
and we have 
$${\rm Tr}\,  \Gamma'_r= \sum_{s=1}^r|\xi'_{r,s}|^2.$$
We define the representation $\rho'$ of ${\mathcal A}$
and the vector $\xi'$ of $\rho'$ via
$$
\rho':=\oplus_{r\in \N\cup\{\infty\}} r\int_{G\times \widehat G} (x,\lambda)\ d\gamma'_r(x,\lambda),
\qquad\mbox{and}\qquad
\xi':=\oplus_{r\in \N\cup\{\infty\}} \oplus_{s=1}^r \int_{G\times \widehat G}\xi'_{r,s}(x,\lambda) \ d\gamma'_r(x,\lambda).
$$
We observe that $\xi'$ is a unit vector:
$$
|\xi'|^2
=\sum_{r\in \N\cup\{\infty\}} \sum_{s=1}^r |\tilde \xi'_{r,s}|^2
=\sum_{r\in \N\cup\{\infty\}} \int_{G\times \widehat G}{\rm Tr}\, \Gamma'_r\ d\gamma'_r
= \int_{G\times \widehat G}{\rm Tr }\,\Gamma'\ d\gamma'=1.
$$
Moreover for any $\sigma\in {\mathcal A}$:
\begin{align*}
(\rho'(\sigma) \xi',\xi')
&=  \sum_{r\in \N\cup\{\infty\}} \sum_{s=1}^r
\int_{\Sigma_1} \left(\sigma\xi'_{r,s},\xi'_{r,s}\right) d\gamma'_r
=  \sum_{r\in \N\cup\{\infty\}} 
\int_{G\times \widehat G}{ \rm Tr }\, \left(\sigma \Gamma'_r\right) d\gamma'_r
\\&=
\int_{G\times \widehat G}{ \rm Tr } \,\left(\sigma \Gamma'\right) d\gamma' = \ell(\sigma).
\end{align*}
In other words, the state associated with $\rho'$ and $\xi'$ coincides with $\ell$. 
This implies  
 that $\rho'$ and $\rho$ are equivalent \cite[Proposition 2.4.1]{Dixmier_C*}, therefore the measures $\gamma'_r$ and $\gamma_r$ are equivalent for every $r\in \N\cup\{\infty\}$
 \cite[Theorem 8.6.6]{Dixmier_C*}.
 In other words, there exists a measurable positive function $f_r$ supported in $B_r$ such that 
 $$d\gamma'_r(x,\lambda)=f_r(x,\lambda) d\gamma_r(x,\lambda).$$
As  $\xi'$ corresponds to $\xi$ via the $(\rho',\rho)$-equivalence, 
we must have 
$$\Gamma_r(x,\lambda)=f_r(x,\lambda) \Gamma'_r(x,\lambda),$$
which concludes the proof.


\end{document}